\theoremstyle{plain}
\newtheorem{theorem}{Theorem}
\newtheorem*{thm*}{Theorem}
\newtheorem{proposition}{Proposition}
\newtheorem{lemma}{Lemma}
\newtheorem*{lem*}{Lemma}
\newtheorem{definition}{Definition}
\newtheorem*{defi*}{Definition}
\newtheorem*{rques*}{Remarks}
\newcommand{\bt}{\boldsymbol \theta}
\begin{document}

\begin{frontmatter}

\title{Sharp optimal recovery in the two Component Gaussian Mixture Model}
\runtitle{ Sharp optimal recovery in the Gaussian Mixture Model}

\begin{aug}
\author{\fnms{Mohamed} \snm{Ndaoud}\ead[label=e1]{ndaoud@usc.edu}}


\affiliation{University of Southern California}

\address{Department of Mathematics\\
University of Southern California\\
Los Angeles, CA 90089 \\
\printead{e1}}

\end{aug}

\begin{abstract}
    This paper studies the problem of clustering in the two-component Gaussian mixture model where the centers are separated by $2\Delta$ for some $\Delta>0$.
    We characterize the exact phase transition threshold, given by
$$
\bar{\Delta}_n^{2} = \sigma^{2}\left(1 + \sqrt{1+\frac{2p}{n\log{n}}} \right)\log{n},
$$
such that perfect recovery of the communities is possible with high probability if $\Delta\ge(1+\varepsilon)\bar \Delta_n$, and impossible if $\Delta\le (1-\varepsilon)\bar \Delta_n$
for any constant $\varepsilon>0$. This implies an elbow effect at a critical dimension $p^{*}=n\log{n}$.

    We present a non-asymptotic lower bound for the corresponding minimax Hamming risk improving on existing results. It is, to our knowledge, the first lower bound capturing the right dependence on $p$. We also propose an optimal, efficient and adaptive procedure that is minimax rate optimal. The rate optimality is moreover sharp in the asymptotics when the sample size goes to infinity. Our procedure is based on a variant of Lloyd's iterations initialized by a spectral method; a popular clustering algorithm widely used by practitioners. Numerical studies confirm our theoretical findings.
\end{abstract}





\end{frontmatter}
 \section{Introduction}
 The problems of supervised or unsupervised clustering have gained huge interest in the machine learning literature. In particular, many clustering algorithms are known to achieve good empirical results. A useful model to study and compare these algorithms is the Gaussian mixture model. In this model, we assume that the data are attributed to different centers and that we only have access to observations corrupted by Gaussian noise. For this specific model, one can consider the problem of estimation of the centers, see, e.g., \cite{klusowski},\cite{villar} or the problem of detecting the communities, see, e.g., \cite{zhou},\cite{fei2018hidden},\cite{giraud},\cite{royer}. This paper focuses on community detection.
\subsection{The Gaussian Mixture Model}

We observe $n$ independent random vectors $Y_{1},\dots,Y_{n}\in \mathbf{R}^{p}$. We assume that there exist two unknown vectors $\bt \in \mathbf{R}^{p}$ and $\eta \in \{-1,1\}^{n}$, such that, for all $i=1, \dots,n$,
\begin{equation}\label{model:freq}
Y_{i} = \bt \eta_{i} + \sigma \xi_{i}, 
\end{equation}
where $\sigma>0$, $\xi_{1},\dots,\xi_{n}$ are standard Gaussian random vectors and $\eta_{i}$ is the $i$th component of $\eta$. We denote by $Y$ (respectively, $W$) the matrix with columns $Y_{1},\dots,Y_{n}$ (respectively, $\sigma \xi_{1},\dots,\sigma \xi_{n}$). Model \eqref{model:freq} can be written in matrix form
$$
Y = \bt \eta^{\top} +  W.
$$
We denote by $\mathbf{P}_{(\bt,\eta)}$ the distribution of $Y$ in model \eqref{model:freq} and by $\mathbf{E}_{(\bt,\eta)}$ the corresponding expectation. We assume that $(\bt,\eta)$ belongs to the set
$$
\Omega_{\Delta} = \{ \bt \in \mathbf{R}^{p}: \| \bt\| \geq \Delta\} \times \{-1,1 \}^{n},
$$
where $\Delta>0$ is a given constant. The value $\Delta$ characterizes the separation between the clusters and equivalently the strength of the signal.

In this paper, we study the problem of recovering the communities, that is, of estimating the vector $\eta$.
As estimators of $\eta$, we consider any measurable functions
$\hat\eta=\hat\eta(Y_1,\dots,Y_n)$ of $(Y_1,\dots,Y_n)$
taking values in $\{-1,1\}^n$. 
We characterize the loss of a given $\hat\eta$ by the Hamming distance between $\hat\eta$
and $\eta$, that is, by the number of positions at which $\hat
\eta$ and $\eta$ differ:
\[
| \hat\eta-\eta | := \sum_{j=1}^n
| \hat\eta_j-\eta_j | = 2\sum
_{j=1}^n \mathbf{1}(\hat\eta_j\ne
\eta_j).
\]
Here, $\hat\eta_j$ and $ \eta_j$ are the $j$th
components of $\hat\eta$ and $ \eta$, respectively. Since for community detection it is enough to determine $\eta$ up to a sign change, in what follows we use the loss defined by
$$
r(\hat{\eta},\eta):= \underset{\nu\in\{-1,1\}}{\min}|\hat{\eta}-\nu\eta|.
$$
The expected loss of $\hat{\eta}$ is defined as
$\mathbf{E}_{(\bt,\eta)} r(\hat{\eta},\eta) $.

In the rest of the paper, we will always denote by $\eta$ the true vector to estimate, while $\hat{\eta}$ will denote the corresponding estimator. We consider the following minimax risk
\begin{equation}\label{def:minimax}
    \Psi_{\Delta}:= \underset{\tilde{\eta}}{\inf} \underset{(\bt,\eta) \in \Omega_{\Delta}}{\sup} \frac{1}{n}\mathbf{E}_{(\bt,\eta)}r(\tilde{\eta},\eta),
\end{equation}
where $\underset{\tilde{\eta}}{\inf}$ denotes the infimum over all estimators $\tilde{\eta}$ valued in $\{-1,1\}^{n}$.
A simple lower bound for the risk $\Psi_{\Delta}$ is given by (cf. Proposition \ref{eq:lower_trivial} below):
\begin{equation}\label{eq:lower_trivial}
    \Psi_{\Delta} \geq \frac{c}{1+\Delta/\sigma}e^{-\frac{\Delta^{2}}{2\sigma^{2}}}
\end{equation}
for some $c>0$. The above lower bound is dimension independent. Inspecting its proof, one may notice that this bound is attained at the oracle $\eta^{*}$ given by
$$ \eta_{i}^{*} = \text{sign}\left(  Y_{i}^{\top} \bt \right). $$
This oracle assumes a prior knowledge of $\bt$. It turns out that for $p \geq n$, there exists a regime where the lower bound \eqref{eq:lower_trivial} is not optimal, as pointed by \cite{giraud}. The intuitive explanation is that for $p$ larger than $n$, although the vector $\bt$ is hard to estimate, perfect recovery of the communities is still possible. To the best of our knowledge, there are no lower bounds for $\Psi_{\Delta}$ that capture the issue of estimating $\bt$. This is one of the main questions addressed in the present paper.

{\bf Notation.}
In the rest of this paper we use the following notation. For given sequences $a_{n}$ and $b_n$, we write that $a_{n} = \mathcal{O}(b_{n})$ (respectively, $a_{n} = \Omega(b_n)$) when $a_{n} \leq c b_{n}$ (respectively, $a_{n} \geq c b_{n}$) for some absolute constant $c>0$. We write $a_{n} \asymp b_{n} $ when $a_{n} = \mathcal{O}(b_{n})$ and $a_{n}=\Omega(b_{n})$. For $x,y\in \mathbf{R}^{p}$, we denote by $x^{\top}y$ the Euclidean scalar product, by $\|x\|$ the corresponding norm of $x$ and by $\text{sign}(x)$ the vector of signs of the components of $x$. For $x,y \in \mathbf{R}$, we denote by $x\vee y$ (respectively, $x \wedge y$) the maximum (respectively, minimum) value between $x$ and $y$. To any matrix $M \in \mathbf{R}^{n\times p}$, we denote by $\|M\|_{op}$ its operator norm with respect to the $L^{2}$-norm , by $M^{\top}$ its transpose and by $\text{Tr}(M)$ its trace in case $p=n$. Further, $\mathbf{I}_{n}$ denotes the identity matrix of dimension $n$ and $\mathbf{1}(.)$ denotes the indicator function. We denote by $\Phi^{{\sf c}}(.)$ the complementary cumulative distribution function of the standard Gaussian random variable $z$ i.e., $\forall t \in \mathbf{R}, \Phi^{{\sf c}}(t)=\mathbf{P}(z > t)$. We denote by $c$ and $C$ positive constants that may vary from line to line.

We assume that $p,\sigma$ and $\Delta$ depend on $n$ and the asymptotic results correspond to the limit as $n \to \infty$. All proofs are deferred to the Appendix.

\subsection{Related literature}
The present work can be related to two parallel lines of work. 
\subsubsection{Community detection in the sub-Gaussian mixture model}

        \noindent To our knowledge, \cite{zhou} were  the first to present statistical guarantees for community detection in the sub-Gaussian mixture model using the well-known Lloyd's algorithm, cf. \cite{lloyds}. The results of \cite{zhou} require a better initialization than a random guess in addition to the condition
\begin{equation}\label{cond:zhou:1}
\Delta^{2} =\Omega\left(\sigma^{2}\left( 1 \vee \frac{p}{n}\right)\right),
\end{equation}
in order to achieve \textit{almost full recovery recovery} and
\begin{equation}\label{cond:zhou:2}
\Delta^{2} =\Omega\left(\sigma^{2}\log{n}\left( 1 \vee \frac{p}{n}\right)\right),
\end{equation}
in order to achieve \textit{exact recovery}. The notions of almost full and exact recovery are defined in Section \ref{sec:sharp}.
More recently, \cite{royer} and \cite{giraud} have shown that conditions \eqref{cond:zhou:1} and \eqref{cond:zhou:2} are not optimal in high dimension i.e. for $n=o(p)$. In particular, \cite{giraud} study an SDP relaxation of the K-means criterion that achieves \textit{almost full  recovery} under a milder condition
\begin{equation}\label{cond:giraud:part}
    \Delta^{2} =\Omega\left(\sigma^{2}\left( 1 \vee \sqrt{\frac{p}{n}}\right)\right),
\end{equation}
and \textit{exact recovery} under the condition
\begin{equation}\label{cond:giraud:exact}
    \Delta^{2} =\Omega\left(\sigma^{2}\left( \log{n} \vee \sqrt{\frac{p\log{n}}{n}}\right)\right).
\end{equation}
To the best of our knowledge, conditions \eqref{cond:giraud:part} and \eqref{cond:giraud:exact} are the mildest in the literature, but no matching necessary conditions are known so far. \cite{giraud} provide insightful heuristics about optimality of these conditions. In the supervised setting, where all labels are known similar conditions seem necessary to achieve either almost full or exact recovery. It is still not clear whether optimal conditions in supervised mixture learning are also optimal in the unsupervised setting. 

Another difference between the previous papers is in computational aspects. While, in \cite{giraud}, an SDP relaxation is proposed, a faster algorithm based on Lloyd's iterations is developed in \cite{zhou}. It remains unclear whether we can achieve almost full (respectively, exact) recovery under condition \eqref{cond:giraud:part} (respectively, \eqref{cond:giraud:exact}) through faster methods than SDP relaxations, for instance, through Lloyd's iterations.
 
 \cite{zhou} suggest to initialize Lloyd's algorithm using a spectral method. It would be interesting to investigate whether Lloyd's algorithm initialized by a spectral method, in the same spirit as in \cite{vempala}, can achieve optimal performance in the more general setting where $p$ is allowed to be larger than $n$. 

In this paper, we shed some light on these issues. Specifically, we address the following questions.
\begin{itemize}
    \item Are conditions \eqref{cond:giraud:part} and \eqref{cond:giraud:exact} necessary for both almost full and exact recovery? 
    \item Are optimal requirements similar in both supervised and unsupervised settings? 
    \item Can we achieve results similar to \cite{giraud} using a faster algorithm? 
    \item In case the answer to previous questions is positive, can we achieve the same results adaptively to all parameters?
\end{itemize}
\subsubsection{Community detection in the Stochastic Block Model (SBM)}

\noindent The Stochastic Block Model, cf. \cite{holland}, is probably the most popular framework for node clustering. This model with two communities can be seen as a particular case of model \eqref{model:freq} when both the signal and the noise are symmetric matrices. A non symmetric variant of SBM is the Bipartite SBM, cf. \cite{feldman}. Unlike the case of sub-Gaussian mixtures where most results in the literature are non-asymptotic, results on almost full or exact recovery for the SBM and its variants are mostly asymptotic and focus on sharp phase transitions. \cite{abbe} poses an open question on whether it is possible to characterize sharp phase transitions in other related problems, for instance, in the Gaussian mixture model. 

The first polynomial method achieving exact recovery in the SBM with two communities is due to \cite{bandeira}. The algorithm splits the initial sample into two independent samples. A black-box algorithm is used on the first sample for almost full recovery, then a local improvement is applied on the second sample. As stated in \cite{bandeira}, it is not clear whether algorithms achieving almost full recovery can be used to achieve exact recovery. It remains interesting to understand whether similar results can be achieved through direct algorithms ideally without the splitting step. 

For the Bipartite SBM, sufficient computational conditions for exact recovery are presented in \cite{feldman}, \cite{perkins}. While the sharp phase transition for the problem of detection is fully answered in \cite{perkins}, it is still not clear whether the condition they require, for exact recovery, is optimal. More interestingly, the sufficient condition for exact recovery is different for $p$ of the same order as $n$ and for $p$ larger than $n^{2}$ for instance. This shows a kind of phase transition with respect to $p$, where for some critical dimension $p^{*}$ the hardness of the problem changes.

We resume potential connections between our work and these recent developments in the following questions.
\begin{itemize}
    \item Is it possible to characterize a sharp phase transition for exact recovery in model \eqref{model:freq}?
    \item Are algorithms achieving almost full recovery useful in order to achieve exact recovery in the Gaussian mixture model?
    \item Is there a critical dimension $p^{*}$ that separates different regimes of hardness in the problem of exact recovery?
\end{itemize}

\subsection{Main contribution}

This work provides a sharp analysis of almost full and exact recovery in the two component Gaussian mixture model.
As detailed in Section \ref{sec:sharp}, we show the existence of a sharp phase transition for exact recovery in the Gaussian mixture model as $\Delta$ approaches the
threshold $\bar{\Delta}_{n}$ given by
$$ \bar{\Delta}^{2}_{n} = \sigma^{2}  \left( 1 + \sqrt{1 + \frac{2p}{n\log{n}}}\right)\log{n}. $$ 
For any fixed $\epsilon>0$, exact recovery of the communities is possible if $\Delta\ge (1+\epsilon) \bar\Delta_n$
and impossible if $\Delta\le (1-\epsilon) \bar\Delta_n$.
In particular, this phase transition gives rise to two different regimes around a critical dimension $p^{*} = n\log{n}$, showing that the hardness of exact recovery depends on whether $p$ is larger or smaller than $p^{*}$:
\begin{itemize}
    \item 
If $p=o(p^{*}) $, then $
\bar{\Delta}_{n}=(1+o(1))\sigma\sqrt{2\log{n}}$. In this case, the phase transition threshold for exact recovery is the same as if $\bt$ were known.
\item
 While if $p^{*}=o(p)$, then $\bar{\Delta}_{n}=(1+o(1))\sigma\left(\frac{2p\log{n}}{n}\right)^{1/4}$. This new condition takes into account the hardness of estimation, and $p^{*}$ can be interpreted as a phase transition with respect to the hardness of estimation of $\bt$.
\end{itemize}

The above findings are formalized as non-asymptotic lower bounds for the minimax risk $\Psi_{\Delta}$ and matching upper bounds through a variant of Lloyd's iterations initialized by a spectral method. To do so, we define a key quantity $\mathbf{r}_{n}$ that turns out to be the right signal-to-noise ratio (SNR) of the problem:
\begin{equation}\label{def:SNR}
\mathbf{r}_{n} =  \frac{\Delta^{2}/\sigma^{2}}{\sqrt{\Delta^{2}/\sigma^{2}+p/n}}.
\end{equation}
This SNR is strictly smaller than the "naive" one $\Delta/\sigma$, cf. \eqref{eq:lower_trivial}. In particular, it states that the hardness of the problem depends on the dimension $p$. Among other results, we prove that for some $c_{1},c_{2},C_{1},C_{2}>0$, we have
$$
 C_{1}e^{-c_{1}\mathbf{r}^{2}_{n}} \leq \Psi_{\Delta} \leq C_{2}e^{-c_{2}\mathbf{r}^{2}_{n}}.
$$
Moreover, we give a sharp characterization of the constants in this relation. 

Inspecting the proofs of the lower bounds in Section \ref{sec:lower} reveals that, in a setting where no prior information on $\bt$ is given, the supervised learning estimator \eqref{oracle:hint} is optimal. Interestingly, supervised and unsupervised risks are almost equal, in the sense that the problem of estimating the community of a new observation $Y_{n+1}$ in the Gaussian mixture model is as hard with no information on the first $n$ labels as with
full supervised information on the first $n$ labels,
as long as the centers are unknown. 

As for the upper bound, we introduce and analyze a fully adaptive rate optimal and computationally simple procedure inspired from Lloyd's iterations. In order to achieve optimal decay of the risk, it turns out that it is enough to consider the hollowed Gram matrix
$\mathbf{H}(Y^{\top}Y)$
 where for any squared matrix $M$,  $\mathbf{H}(M)=M-\text{diag}(M)$ and $\text{diag}(M)$ is the diagonal of $M$. 
 
 Our approach has two building blocks. We start by finding a good initialization, then use few iterations aiming to improve the risk of our estimator. We set the initializer $\eta^{0}$ such that $\eta^{0}=\text{sign}(\hat{v})$ and $\hat{v}$ is the eigenvector corresponding to the top eigenvalue of $\mathbf{H}(Y^{\top}Y)$. The risk of $\eta^{0}$ is studied in Section \ref{sec:spectral}. In particular, we observe that $\eta^{0}$ can achieve almost full recovery but cannot show it is rate optimal. As an improvement, we consider in Section \ref{sec:upper} the iterative sequence of estimators $(\eta^{k})_{k \geq 1}$ defined as
 $$
 \forall k \geq 0,\quad \eta^{k+1} = \text{sign}\left( \mathbf{H}(Y^{\top}Y)\eta^{k}\right),
 $$
 and show that it achieves optimal exact recovery after a logarithmic number of iterations.

 In comparison to \cite{zhou}, we get better results, in particular for large $p$. In their approach, a spectral initialization on $\bt$ is considered and estimation of $\bt$ is handled at each iteration. The main difference compared to our procedure lies in the fact that we bypass the step of estimating $\bt$. We only use the matrix $\mathbf{H}(Y^\top Y)$ that is almost blind to the direction of $\bt$. \citet{giraud} present a rate optimal procedure without capturing the sharp optimality. Our procedure differs in two ways from \cite{giraud}. First, it is not an SDP relaxation method and hence is faster to compute. Second, by using the operator $\mathbf{H}$, we do not need to de-bias the Gram matrix, as diagonal deletion is sufficient in our case. In the case of isotropic noise, we should emphasize  that, while there is no need to de-bias the spectral part, as pointed out in \cite{giraud}, it seems to be necessary for Lloyd's iterations to achieve a rate optimal decay. In the general case of heteroscedastic  noise, it is not clear if diagonal deletion is enough to de-bias the Gram matrix.

 After our work was made publicly available, several follow-up papers have extended and complemented our results. On the one hand, \cite{chen2020cutoff} generalized the sharp phase transition to the case of $K$ communities where $K = \mathcal{O}(\log{n})$. On the other hand, after we characterized the sharp phase transition for exact recovery, \cite{leoffler} and \cite{abbe2020ell_p} showed that spectral clustering is sufficient to achieve optimal exact recovery. While their sharp results rely heavily on the Gaussian assumption of the noise, our analysis still holds in the case of sub-Gaussian noise. We comment on their findings in Section \ref{sec:simu} where we show that our procedure outperforms the spectral method empirically.

\section{Non-asymptotic fundamental limits in the Gaussian mixture model}\label{sec:lower}
In this section, we derive a sharp optimal lower bound for the risk $\Psi_{\Delta}$.  As stated in the Introduction, a simple lower bound is given by \eqref{eq:lower_trivial}. The next proposition provides a sharper statement.
\begin{proposition}\label{prop:lower_trivial} 
For any $\Delta>0$, we have 
$$
\Psi_{\Delta} \geq 
c\Phi^{{\sf c}}(\Delta/\sigma),
$$
for some absolute constant $c>0$.
\end{proposition}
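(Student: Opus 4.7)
The plan is a Bayes-risk reduction followed by a coordinate-wise analysis of the posterior. First, I would fix any unit vector $u\in\mathbf{R}^{p}$ and set $\bt=\Delta u$, so that $(\bt,\eta)\in\Omega_{\Delta}$ for every $\eta\in\{-1,1\}^{n}$. Taking $\pi$ as the uniform prior on $\eta\in\{-1,1\}^{n}$, the standard Bayes lower bound gives
$$
\Psi_{\Delta}\;\geq\;\frac{1}{n}\inf_{\tilde{\eta}}\mathbf{E}_{\pi}\mathbf{E}_{Y}\,r(\tilde{\eta},\eta).
$$
Under $\pi$, conditional on $Y$ the coordinates $\eta_{i}$ are independent, with posterior $q_{i}(Y):=\mathbf{P}(\eta_{i}=1\mid Y)$ depending only on $Y_{i}$. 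Setting $m_{i}(Y):=\min(q_{i},1-q_{i})$, a direct computation shows that $\mathbf{E}_{Y}[m_{i}(Y)]=\Phi^{{\sf c}}(\Delta/\sigma)$, this being exactly the Bayes error of the scalar Gaussian two-point test between the distributions with means $\pm\bt$ and covariance $\sigma^{2}\mathbf{I}_{p}$.

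The main step is to convert these per-coordinate quantities into a lower bound on $r(\tilde{\eta},\eta)=\min(H,n-H)$ with $H:=|\{i:\tilde{\eta}_{i}\neq\eta_{i}\}|$. I would start from the elementary inequality $\min(a,b)\geq ab/(a+b)$ applied with $a=H$ and $b=n-H$, which gives $\min(H,n-H)\geq H(n-H)/n$. Given $Y$, the indicators $X_{i}=\mathbf{1}(\tilde{\eta}_{i}\neq\eta_{i})$ are independent Bernoullis with parameters $p_{i}\in\{q_{i},1-q_{i}\}$ determined by $\tilde{\eta}_{i}$, so
$$
\mathbf{E}[H(n-H)\mid Y]\;=\;n\sum_{i}p_{i}-\sum_{i}p_{i}(1-p_{i})-\Bigl(\sum_{i}p_{i}\Bigr)^{2}.
$$
Combining the crude bound $\sum_{i}p_{i}(1-p_{i})\leq\sum_{i}p_{i}$ with the sub-additivity $\min\bigl(\sum_{i}p_{i},\sum_{i}(1-p_{i})\bigr)\geq\sum_{i}\min(p_{i},1-p_{i})=\sum_{i}m_{i}(Y)$ (which holds for any $\tilde{\eta}$, regardless of whether it aligns with $\eta$ or $-\eta$), one arrives at $\mathbf{E}[\min(H,n-H)\mid Y]\geq(1/2-1/n)\sum_{i}m_{i}(Y)$. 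Integrating over $Y$ and dividing by $n$ yields $\Psi_{\Delta}\geq c\,\Phi^{{\sf c}}(\Delta/\sigma)$ for an absolute $c>0$, with the small-$n$ cases absorbed into the constant.

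The main obstacle is precisely this conversion step: a naive per-coordinate Le Cam argument controls $\mathbf{E} H$ but not $\mathbf{E}\min(H,n-H)$, and the latter can be driven arbitrarily small when an estimator nearly perfectly sign-flips the labels. The combination $\min(h,n-h)\geq h(n-h)/n$ together with the sub-additivity of $\min$ is precisely what bridges this gap while keeping the constant absolute and independent of $n$ and $p$.
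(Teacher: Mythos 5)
Your proof is correct in substance but takes a genuinely different route from the paper. The paper handles the sign ambiguity in $r(\cdot,\cdot)$ by invoking the argument of Gao et al. (2018): fix roughly half the coordinates of $\eta$ to known values, which forces $r(\eta^1,\eta^2)=|\eta^1-\eta^2|$ on the restricted class and reduces the problem to a per-coordinate Hamming risk; then Neyman--Pearson identifies the Bayes-optimal rule $\eta_j^*=\operatorname{sign}(\bar\bt^\top Y_j)$ with error $\Phi^{\sf c}(\Delta/\sigma)$. You instead keep all $n$ coordinates, exploit the conditional independence of $\eta_i\mid Y$ under the product prior, and tame the $\min$ over sign flips with the elementary bound $\min(H,n-H)\geq H(n-H)/n$ together with an explicit computation of $\mathbf{E}[H(n-H)\mid Y]$. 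Your identity $\mathbf{E}_Y[m_i(Y)]=\Phi^{\sf c}(\Delta/\sigma)$ is the same one-dimensional Bayes error the paper extracts via Neyman--Pearson. The paper's approach is shorter because the label-fixing trick makes the sign issue disappear before any probability is computed, and it reuses infrastructure needed for the other lower bounds in the appendix; your approach is more self-contained and avoids the reduction lemma entirely, at the cost of a second-moment computation.

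One step needs tightening. From $\mathbf{E}[\min(H,n-H)\mid Y]\geq\tfrac1n\bigl(nS-\sum_i p_i(1-p_i)-S^2\bigr)$ with $S=\sum_i p_i$, the bound $\sum_i p_i(1-p_i)\leq S$ yields $\tfrac1n S(n-1-S)$, which is \emph{negative} when $S>n-1$, so it cannot by itself dominate $(\tfrac12-\tfrac1n)\sum_i m_i(Y)$. You clearly have the symmetry in mind (``regardless of whether it aligns with $\eta$ or $-\eta$''), and the clean fix is to use $p_i(1-p_i)\leq\min(p_i,1-p_i)=m_i$ for each $i$, giving
$$
\mathbf{E}[\min(H,n-H)\mid Y]\;\geq\;\frac{S(n-S)-\sum_i m_i}{n}\;\geq\;\frac{\min(S,n-S)\bigl(\max(S,n-S)-1\bigr)}{n}\;\geq\;\Bigl(\tfrac12-\tfrac1n\Bigr)\sum_i m_i(Y),
$$
where the second inequality uses $\sum_i m_i\leq\min(S,n-S)$ and $\min(S,n-S)\max(S,n-S)=S(n-S)$, and the third uses $\max(S,n-S)\geq n/2$. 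Equivalently, one may note that $\min(H,n-H)$ is invariant under $\tilde\eta\mapsto-\tilde\eta$ and assume $S\leq n/2$ without loss of generality. With either fix the argument closes, and after multiplying by the factor $2$ in the definition of $r$ and integrating over $Y$ it yields $\Psi_\Delta\geq(1-2/n)\Phi^{\sf c}(\Delta/\sigma)$, which gives the stated absolute constant for all $n$ above a fixed threshold.
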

Following the same lines as in \cite{ndaoud}, we obtain two different lower bounds for the minimax risk. Proposition \ref{prop:lower_trivial} gives a bound responsible for the hardness of recovering communities due to the lack of information on the labels. It still benefits from the knowledge of $\bt$. In \cite{giraud}, it becomes clear that for large $p$, the hardness of the problem is due to the hardness of estimating $\bt$. Hence, in order to capture this phenomenon, one may try to hide the information about the direction of $\bt$ in order to make its estimation difficult.

More precisely, in order to bound the risk $\Psi_\Delta$ from below, we place a prior on both $\eta$ and $\bt$. Intuitively, we would choose a Gaussian prior for $\bt$ in order to make its estimation the hardest, but one should keep in mind that $\bt$ is constrained to the set $\Omega_{\Delta}$. To derive lower bounds on constrained sets, we act as in \cite{butucea2018}. Let $\pi=\pi_{\bt}\times \pi_{\eta}$ be a product probability measure on $\mathbf{R}^{p}\times\{-1,1\}^{n}$ (a prior on $(\bt,\eta)$). We denote by $\mathbb{E}_{\pi}$ the expectation with respect to~$\pi$.
\begin{theorem}\label{prop:lower_smart}
Let $\Delta>0$ and $\pi=\pi_{\bt}\times\pi_{\eta}$ a product probability measure on $\mathbf{R}^{p}\times\{-1,1\}^{n}$. Then,
$$ \Psi_{\Delta} \geq c\left(\frac{1}{\lfloor n/2 \rfloor}\sum_{i=1}^{\lfloor n/2 \rfloor}\underset{\hat{T_{i}}\in [-1,1]}{\inf}  \mathbb{E}_{\pi}\mathbf{E}_{(\bt,\eta)}|\hat{T}_{i} - \eta_{i}| - \pi_{\bt}\left( \|\bt \| < \Delta \right)\right), $$
where $\inf_{\hat{T}_{i}\in [-1,1]}$ is the infimum over all  estimators $\hat{T}_{i}(Y)$ with values in $[-1,1]$ and $c>0$.
\end{theorem}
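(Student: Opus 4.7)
The plan is to derive the bound via a Bayesian reduction in the style of \cite{butucea2018}, tailored on one hand to the constrained parameter set $\Omega_\Delta$, and on the other hand to the sign ambiguity built into the loss $r$. I proceed in three steps.

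Step 1 (Localize to the prior, with a constraint-violation correction). Using that $r(\tilde\eta,\eta)\le 2n$ and that $\pi=\pi_{\bt}\otimes\pi_{\eta}$ gives $\pi(\Omega_\Delta^{\mathsf c})=\pi_{\bt}(\|\bt\|<\Delta)$, I start from the trivial inequality $\sup_{(\bt,\eta)\in\Omega_\Delta}\mathbf E\,r\ \ge\ \mathbb E_\pi[\mathbf E\,r\,\mathbf 1_{\Omega_\Delta}]$ to obtain
$$
\sup_{(\bt,\eta)\in\Omega_\Delta}\mathbf{E}_{(\bt,\eta)}r(\tilde\eta,\eta)\ \ge\ \mathbb E_\pi\mathbf E_{(\bt,\eta)}r(\tilde\eta,\eta)\;-\;2n\,\pi_{\bt}(\|\bt\|<\Delta).
$$
After dividing by $n$, this is precisely the subtracted term of the theorem.

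Step 2 (Reduce to coordinate-wise Bayes risks while handling the sign ambiguity). This is the main technical step. Since $r(\tilde\eta,\eta)=r(-\tilde\eta,\eta)$, a naive coordinate-wise bound would give only the trivial per-coordinate Bayes risk, which vanishes whenever $\pi_\eta$ is sign-symmetric. The device is to split the $n$ indices into two halves of size $\lfloor n/2\rfloor$: use the second half to build a soft sign estimator $\hat s(Y)\in[-1,1]$ (for instance the properly normalized version of $\sum_{j>\lfloor n/2\rfloor}\tilde\eta_j\,\eta_j$ projected onto the observed data), and for each $i\le\lfloor n/2\rfloor$ define the per-coordinate estimator
$$
\hat T_i(Y)\;:=\;\hat s(Y)\,\tilde\eta_i\ \in\ [-1,1].
$$
A case analysis according to whether $\hat s$ realizes the optimal sign $\nu^\star\in\arg\min_\nu|\tilde\eta-\nu\eta|$ then yields, up to an absolute constant,
$$
r(\tilde\eta,\eta)\ \ge\ c'\sum_{i=1}^{\lfloor n/2\rfloor}\bigl|\hat T_i(Y)-\eta_i\bigr|.
$$
The relaxation to $[-1,1]$-valued per-coordinate estimators in the statement is dictated precisely by this construction: the optimal sign guess $\hat s$ is a soft, not a hard, sign.

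Step 3 (Pass the infimum inside, combine). Take the expectation over $\pi$ and $Y$, divide by $\lfloor n/2\rfloor$, and use that the infimum over $\tilde\eta\in\{-1,1\}^n$ on the left dominates the sum of the infima over the induced $[-1,1]$-valued $\hat T_i$ on the right. Combining with Step 1 delivers the stated inequality with an absolute constant $c>0$. The main obstacle is Step 2: designing $\hat s$ and carrying out the case analysis so that the pointwise inequality holds with an absolute, dimension-free constant; Steps 1 and 3 are routine applications of the Bayesian reduction of a minimax risk on a constrained set.
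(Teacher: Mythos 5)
Your Step 1 is sound and matches the paper's treatment of the constraint (the paper works with the prior conditioned on $\Omega_\Delta$ and invokes a median-type Bayes estimator, but the bookkeeping is the same: the cost of the constraint enters as $\pi_{\bt}(\|\bt\|<\Delta)$ since $\Omega_\Delta$ does not constrain $\eta$). Step 3 is also fine \emph{provided} Step 2 were true.

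The genuine gap is Step 2: the pointwise inequality
$$
r(\tilde\eta,\eta)\ \ge\ c'\sum_{i=1}^{\lfloor n/2\rfloor}\bigl|\hat T_i(Y)-\eta_i\bigr|,
\qquad \hat T_i(Y)=\hat s(Y)\,\tilde\eta_i(Y),
$$
cannot hold for any estimator $\hat s(Y)\in[-1,1]$. The optimal sign $\nu^\star$ is a function of both $\tilde\eta(Y)$ and the \emph{unknown} $\eta$, so $\hat s$, which depends on $Y$ alone, cannot realize it for every $(\eta,Y)$. Concretely, fix any $\eta$ and any $Y$ for which $\tilde\eta(Y)=\eta$ but $\hat s(Y)\le 0$ (such a pair exists for some $\eta$ whatever the decision rule $\hat s$, since $-\eta$ is also a valid parameter value compatible with the same $Y$ under the Gaussian model and must get the opposite sign). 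Then $r(\tilde\eta,\eta)=0$ while each $|\hat T_i-\eta_i|=|\hat s-1|\ge 1$, so the right-hand side is of order $n$. No absolute constant $c'>0$ makes the pointwise bound valid. Relatedly, the candidate $\hat s$ you sketch, a normalized version of $\sum_{j>\lfloor n/2\rfloor}\tilde\eta_j\eta_j$, involves $\eta$ directly and is therefore not an estimator; the ``projected onto the observed data'' step is exactly where the argument would have to be carried out, and it is not.

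The paper sidesteps this problem with a different device, the reduction in Theorem~2 of \cite{gao2018}: fix the labels on a subset $T\subset\{1,\dots,n\}$ of size $\lfloor n/2\rfloor+1$ equal to a reference $z^*$, and restrict to the sub-parameter set where $\eta_i=z^*_i$ for $i\in T$. On this set, flipping the sign necessarily flips a majority of coordinates, so the sign ambiguity is resolved \emph{by construction}: $r(\eta^1,\eta^2)=|\eta^1-\eta^2|$ for all restricted labels. The per-coordinate Bayes reduction (over $i\in T^{\sf c}$) then proceeds without any pointwise sign-guessing claim. If you want to repair your argument you either need to import that reduction, or replace the pointwise inequality by a statement in expectation and then carefully control the event that $\hat s$ misestimates $\nu^\star$, which is a substantially different and more delicate argument than a ``case analysis.''
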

Theorem \ref{prop:lower_smart} is useful to derive non-asymptotic lower bounds for constrained minimax risks. For the corresponding lower bound to be optimal, we need the remainder term $\pi_{\bt}\left( \|\bt \| < \Delta \right)$ to be negligible. In other words, the prior on $\bt$ must ensure that $\|\bt\|$ is greater than $\Delta$ with high probability. This would make the problem of recovery easier. Hence, it is clear that there exists some trade-off concerning the choice of $\pi_{\bt}$.

Let $\pi^\alpha = \pi^{\alpha}_{\bt} \times \pi_{\eta}$ be a product prior on $\mathbf{R}^{p}\times\{-1,1\}^{n}$, such that $\pi^{\alpha}_{\bt}$ is the distribution of the Gaussian random vector with i.i.d. centered entries of variance $\alpha^{2}$, $\pi_{\eta}$ is the distribution of the vector with i.i.d. Rademacher entries, and $\bt$ is independent of $\eta$. For this specific choice of prior we get the following result.
\begin{proposition}\label{thm:supervised_lower_bound}
For any $\alpha>0$, we have for all $i=1,\dots,n$,
$$ \underset{\hat{T}_{i}\in [-1,1]}{\inf}  \frac{1}{n}\mathbb{E}_{\pi^{\alpha}}\mathbf{E}_{(\bt,\eta)}|\hat{T}_{i} - \eta_{i}| \geq  \frac{1}{n}\mathbb{E}_{\pi^{\alpha}}\mathbf{E}_{(\bt,\eta)}|\eta^{**}_{i} - \eta_{i}|, $$
where $\eta^{**}$ is a supervised learning oracle given by
$$ \forall i = 1,\dots,n, \quad  \eta^{**}_{i} = {\normalfont \text{sign}} \left(Y_{i}^{\top}\left(\sum_{j \neq i}\eta_{j}Y_{j}\right) \right). $$
\end{proposition}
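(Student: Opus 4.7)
The plan is to exploit the classical observation that enlarging the sigma-algebra on which the estimator is allowed to depend can only decrease the Bayes risk, and then to identify the Bayes rule in the enlarged problem with the oracle $\eta^{**}$.

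First I would note that any $\hat T_i$ with values in $[-1,1]$ that is measurable with respect to $\sigma(Y)$ is also measurable with respect to $\sigma(Y,\eta_{-i})$, where $\eta_{-i}:=(\eta_j)_{j\neq i}$. Consequently
\[
\inf_{\hat T_i \in [-1,1]}\mathbb{E}_{\pi^{\alpha}}\mathbf{E}_{(\bt,\eta)}|\hat T_i-\eta_i|
\;\geq\;
\inf_{\hat T_i(Y,\eta_{-i})\in[-1,1]}\mathbb{E}_{\pi^{\alpha}}\mathbf{E}_{(\bt,\eta)}|\hat T_i-\eta_i|.
\]
For a target $\eta_i\in\{-1,1\}$ and $t\in[-1,1]$ one has $|t-\eta_i|=1-t\eta_i$, so conditionally on $(Y,\eta_{-i})$ the posterior risk equals $1-t\,\mathbb{E}[\eta_i\mid Y,\eta_{-i}]$, and the Bayes rule is $\hat T_i^{\mathrm{Bayes}}=\mathrm{sign}\bigl(\mathbb{E}[\eta_i\mid Y,\eta_{-i}]\bigr)$. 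Thus the second infimum is attained at this Bayes rule, and the problem reduces to identifying its sign with the sign of $Y_i^{\top}\sum_{j\neq i}\eta_j Y_j$.

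Next I would compute $P(\eta_i=\pm 1\mid Y,\eta_{-i})$ by marginalising out $\bt$ under the Gaussian prior $\pi^{\alpha}_{\bt}$. Writing the joint density and using $\eta_j^{2}=1$,
\[
p(Y\mid \bt,\eta)\;\propto\;\exp\!\left(-\frac{1}{2\sigma^{2}}\sum_{j=1}^{n}\|Y_j\|^2 +\frac{1}{\sigma^{2}}\bt^{\top}\!\sum_{j=1}^{n}\eta_j Y_j -\frac{n}{2\sigma^{2}}\|\bt\|^{2}\right),
\]
and combining with $p(\bt)\propto\exp(-\|\bt\|^{2}/(2\alpha^{2}))$ gives a Gaussian in $\bt$. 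Setting $\tau^{-2}:=\alpha^{-2}+n\sigma^{-2}$ and integrating out $\bt$ leaves a marginal likelihood
\[
p(Y\mid \eta)\;\propto\;\exp\!\left(\frac{\tau^{2}}{2\sigma^{4}}\Bigl\|\sum_{j=1}^{n}\eta_j Y_j\Bigr\|^{2}\right),
\]
where the proportionality absorbs all factors that do not depend on $\eta$. Letting $S:=\sum_{j\neq i}\eta_j Y_j$, the log-odds for $\eta_i$ is
\[
\log\frac{P(\eta_i=+1\mid Y,\eta_{-i})}{P(\eta_i=-1\mid Y,\eta_{-i})}
=\frac{\tau^{2}}{2\sigma^{4}}\bigl(\|Y_i+S\|^{2}-\|-Y_i+S\|^{2}\bigr)
=\frac{2\tau^{2}}{\sigma^{4}}\,Y_i^{\top}S.
\]

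The posterior mean is therefore $\mathbb{E}[\eta_i\mid Y,\eta_{-i}]=\tanh\!\bigl(2\tau^{2}\sigma^{-4}Y_i^{\top}S\bigr)$, whose sign coincides almost surely with $\mathrm{sign}(Y_i^{\top}S)=\eta^{**}_i$. Hence the Bayes rule equals $\eta^{**}_i$, and combining with the first display yields the claimed inequality after dividing by $n$. The only non-routine step is the Gaussian marginalisation over $\bt$, but because the prior is isotropic Gaussian and the likelihood quadratic in $\bt$, the computation is standard and the dependence of the marginal on $\eta$ reduces cleanly to a quadratic form in $\sum_j\eta_j Y_j$, which is why the log-odds become linear in $Y_i^{\top}S$ and the sign reduces exactly to $\eta^{**}_i$.
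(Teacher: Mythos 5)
Your argument is correct, and the overall skeleton is the same as the paper's: allow the estimator to additionally depend on $\eta_{-i}$, identify the resulting Bayes rule, and show it coincides with $\eta^{**}_i$. The computational route for the marginal likelihood, however, is genuinely different. The paper works row-wise: conditioning on $\eta$ and integrating out $\bt$ row by row, each row $L_j$ of $Y$ becomes an $n$-dimensional Gaussian with covariance $\mathbf{I}_n + \alpha^2\eta_\epsilon\eta_\epsilon^\top$, and the likelihood ratio is then computed via the Sherman--Morrison formula for $\Sigma_\epsilon^{-1}$ together with the Neyman--Pearson lemma. You instead work column-wise: keep the conditional density $p(Y\mid\bt,\eta)$, complete the square in $\bt$, integrate $\bt$ out in one shot, and read off the marginal $p(Y\mid\eta)\propto\exp\!\bigl(\tfrac{\tau^2}{2\sigma^4}\|\sum_j\eta_jY_j\|^2\bigr)$, after which the log-odds are immediate. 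Your version is arguably more transparent in exposing why the statistic is $Y_i^{\top}\sum_{j\neq i}\eta_jY_j$ and also makes explicit the Bayes-rule step ($|t-\eta_i|=1-t\eta_i$ gives $\hat T_i^{\mathrm{Bayes}}=\mathrm{sign}(\mathbb{E}[\eta_i\mid Y,\eta_{-i}])$), which the paper leaves implicit behind the Neyman--Pearson citation. The paper's row-wise view, by contrast, is the one that naturally connects to the spiked-covariance/SBM analogies exploited later in the manuscript. Both arguments are complete and reach the same conclusion.
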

It is interesting to notice that each entry of the supervised learning oracle $\eta^{**}$ only depends on $\bt$ through its best estimator under the Gaussian prior when the labels for other entries are known. The lower bound of Proposition \ref{thm:supervised_lower_bound} confirms the intuition that the supervised learning oracle is optimal in a minimax sense. For $\sigma>0$, define $G_{\sigma}$ by the relation:
\begin{equation}\label{def:G}
    \forall t \in \mathbf{R}, \quad G_{\sigma}(t,\bt) =\mathbf{P}\left( \left( \bt + \sigma\xi_{1}\right)^{\top}\left( \bt + \frac{\sigma}{n-1}\sum_{j=2}^{n}\xi_{j} \right) \leq \| \bt \|^{2}t     \right),
\end{equation}
where $\xi_{1},\dots,\xi_{n}$ are i.i.d.  standard Gaussian random vectors. Combining Theorem \ref{prop:lower_smart} and Proposition \ref{thm:supervised_lower_bound} and using the fact that all entries of the prior $\pi^{\alpha}$ are i.i.d. we obtain the next proposition.
\begin{proposition}\label{prop:lower:simple}
Let $\Delta>0$ and let $G_{\sigma}$ be the function defined in \eqref{def:G}. For any $\alpha > 0$, we have
$$
  \Psi_{\Delta} \geq c\mathbb{E}_{\pi^{\alpha}_{\bt}}G_{\sigma}(0,\bt) - c\mathbf{P}\left( \sum_{j=1}^{p} \varepsilon_{j}^{2}\leq \frac{\Delta^{2}}{\alpha^{2}} \right) ,
  $$
  where $\varepsilon_{j}$ are i.i.d. standard Gaussian random variables and $c>0$
  is an absolute constant.
\end{proposition}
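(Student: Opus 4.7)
The plan is to combine Theorem \ref{prop:lower_smart} with the prior $\pi = \pi^\alpha$ and Proposition \ref{thm:supervised_lower_bound}, then explicitly identify the Bayes-type risk of the supervised oracle $\eta^{**}$ with $\mathbb{E}_{\pi^{\alpha}_{\bt}} G_{\sigma}(0,\bt)$ using the i.i.d.\ product structure of $\pi^\alpha$.

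First I would apply Theorem \ref{prop:lower_smart} with $\pi = \pi^\alpha$ to write
\[
\Psi_{\Delta} \;\geq\; c\!\left(\frac{1}{\lfloor n/2\rfloor}\sum_{i=1}^{\lfloor n/2\rfloor}\inf_{\hat T_i\in[-1,1]} \mathbb{E}_{\pi^\alpha}\mathbf{E}_{(\bt,\eta)}|\hat T_i-\eta_i|\;-\;\pi^\alpha_{\bt}(\|\bt\|<\Delta)\right).
\]
Applying Proposition \ref{thm:supervised_lower_bound} coordinate-wise (the factor $1/n$ cancels on both sides) replaces each inner infimum by $\mathbb{E}_{\pi^{\alpha}}\mathbf{E}_{(\bt,\eta)}|\eta^{**}_{i} - \eta_{i}|$, and this quantity does not depend on $i$ by the exchangeability of the prior. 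The remainder term is simple to express: writing $\bt = \alpha\varepsilon$ with $\varepsilon\sim\mathcal{N}(0,\mathbf{I}_p)$ under $\pi^\alpha_{\bt}$, we have $\pi^\alpha_{\bt}(\|\bt\|<\Delta) = \mathbf{P}\bigl(\sum_{j=1}^p\varepsilon_j^2<\Delta^2/\alpha^2\bigr)$, which is exactly the second term in the statement.

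The substantive step is the explicit evaluation of $\mathbb{E}_{\pi^{\alpha}}\mathbf{E}_{(\bt,\eta)}|\eta^{**}_{i} - \eta_{i}| = 2\,\mathbf{P}_{\pi^\alpha}(\eta^{**}_i\neq\eta_i)$. Conditioning on $(\bt,\eta_i)$ and using $Y_j=\bt\eta_j+\sigma\xi_j$, I would write
\[
\sum_{j\neq i}\eta_j Y_j \;=\; (n-1)\,\bt\,\eta_i \;+\; \sigma\sum_{j\neq i}\eta_j\xi_j,
\]
where by the independence of $\xi_j$ from $\eta$ and the rotational invariance of the Gaussian, $\sum_{j\neq i}\eta_j\xi_j$ has the same distribution (given $\eta$) as $\sum_{j\neq i}\xi_j$. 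By symmetry between $\eta_i=1$ and $\eta_i=-1$, the error probability reduces to
\[
\mathbf{P}\!\left((\bt+\sigma\xi_1)^\top\Bigl(\bt+\tfrac{\sigma}{n-1}\sum_{j=2}^{n}\xi_j\Bigr)\leq 0\right)\;=\;G_\sigma(0,\bt),
\]
after dividing the inner sum by $(n-1)$ and relabeling indices. Taking expectation over $\bt\sim\pi^\alpha_{\bt}$ then yields $\mathbb{E}_{\pi^\alpha_{\bt}}G_\sigma(0,\bt)$; absorbing the factor $2$ into $c$ finishes the proof.

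The main obstacle I anticipate is bookkeeping rather than conceptual: one must carefully verify that the conditional distribution of $\sum_{j\neq i}\eta_j\xi_j$ given $\eta$ is free of the signs $\eta_{-i}$ (so that after taking the expectation over $\pi_\eta$ nothing is lost) and that the normalization by $(n-1)$ lines up exactly with the definition \eqref{def:G} of $G_\sigma$. Once these algebraic identifications are made, the rest of the argument is an application of the previously established results.
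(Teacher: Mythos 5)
Your proposal follows essentially the same route as the paper: apply Theorem \ref{prop:lower_smart} with the prior $\pi^{\alpha}$, replace each coordinatewise infimum by the risk of the oracle $\eta^{**}$ via Proposition \ref{thm:supervised_lower_bound} (the factor $1/n$ indeed cancels), identify the remainder with the chi-square tail $\mathbf{P}\bigl(\sum_{j}\varepsilon_j^2\le \Delta^2/\alpha^2\bigr)$, and evaluate the oracle's error probability as $\mathbb{E}_{\pi^{\alpha}_{\bt}}G_{\sigma}(0,\bt)$ using the product structure of the prior and the sign-invariance of the Gaussian noise. One displayed identity is wrong as written: since $\eta_j Y_j=\bt+\sigma\eta_j\xi_j$, you have $\sum_{j\neq i}\eta_jY_j=(n-1)\bt+\sigma\sum_{j\neq i}\eta_j\xi_j$, with no factor $\eta_i$ on the signal term; the label $\eta_i$ enters only through the error event $\bigl\{\eta_iY_i^{\top}\sum_{j\neq i}\eta_jY_j\le 0\bigr\}$, where $\eta_iY_i=\bt+\sigma\eta_i\xi_i$ has, conditionally on $\bt$, the same law as $\bt+\sigma\xi_i$. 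Taken literally, your formula would make the subsequent symmetry step fail (for $\eta_i=-1$ the signal part of $Y_i^{\top}\sum_{j\neq i}\eta_jY_j$ would be $+(n-1)\|\bt\|^2$, so the oracle would look trivially biased toward $+1$), but your final displayed probability is the correct reduction, so this is a one-line bookkeeping fix rather than a conceptual gap: exactly as in the paper, the vectors $\eta_iY_i$ are i.i.d.\ copies of $\bt+\sigma\xi$ given $\bt$, $|\eta_i^{**}-\eta_i|=2\,\mathbf{1}\bigl(\eta_iY_i^{\top}\sum_{j\neq i}\eta_jY_j\le 0\bigr)$, and dividing the sum by $n-1$ matches the definition \eqref{def:G} of $G_{\sigma}(0,\bt)$.
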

 We are now ready to state the main result of this section. As explained in \cite{giraud}, the main limitation of the analysis in \cite{zhou} is partially due to the choice of the signal-to-noise ratio (SNR) as $\Delta/\sigma$. We use here the SNR $\mathbf{r}_{n}$ given in \eqref{def:SNR}. It is of the same order as the SNR presented in \cite{giraud}.
\begin{theorem}\label{thm:lower_asymptotic}
Let $\Delta>0$. For $n$ large enough, there exists a sequence $\epsilon_{n}$  such that $\epsilon_{n} = o(1)$ and  
$$ \Psi_{\Delta} \geq c\Phi^{{\sf c}}(\left( \mathbf{r}_{n}(1+\epsilon_{n})\right), $$
for some absolute constant $c>0$.
\end{theorem}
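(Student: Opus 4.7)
The plan is to apply Proposition \ref{prop:lower:simple} with a Gaussian prior of variance $\alpha^{2}=\Delta^{2}(1+\delta_{n})/p$, where $\delta_{n}=o(1)$ is a slowly decaying sequence. Bernstein-type concentration for $\chi_{p}^{2}$ variables bounds the remainder term $\mathbf{P}(\chi_{p}^{2}\leq p/(1+\delta_{n}))$ by $\exp(-cp\delta_{n}^{2})$, while $\|\bt\|^{2}=\alpha^{2}\chi_{p}^{2}$ satisfies $\|\bt\|\leq \Delta\sqrt{1+\delta_{n}'}$ with probability $1-o(1)$ for a companion $\delta_{n}'=o(1)$. By rotational invariance of the Gaussian noise, $G_{\sigma}(0,\bt)=g(\|\bt\|)$ for a function $g$ that is easily seen to be non-increasing; thus the main term is at least $g(\Delta\sqrt{1+\delta_{n}'})(1-o(1))$.

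To evaluate $g(v)$ I condition on the sample average $\bar\xi=\frac{1}{n-1}\sum_{j=2}^{n}\xi_{j}$. Since $(\bt+\sigma\xi_{1})^{\top}(\bt+\sigma\bar\xi)$ is Gaussian given $\bar\xi$, with mean $\bt^{\top}(\bt+\sigma\bar\xi)$ and variance $\sigma^{2}\|\bt+\sigma\bar\xi\|^{2}$, the definition of $g$ yields the exact identity
$$
g(v)=\mathbf{E}\!\left[\Phi^{{\sf c}}\!\left(\frac{v(v+\sigma W)}{\sigma\sqrt{(v+\sigma W)^{2}+\sigma^{2}Q}}\right)\right],
$$
where $W\sim N(0,1/(n-1))$ and $Q\sim\chi_{p-1}^{2}/(n-1)$ are independent (obtained by decomposing $\bar\xi$ along and orthogonal to $\bt$). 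On the event $\{(v+\sigma W)^{2}+\sigma^{2}Q\geq(v^{2}+\sigma^{2}p/(n-1))(1-\epsilon_{n})\}$, which holds with probability $1-\exp(-cp\epsilon_{n}^{2})-\exp(-cnv^{2}\epsilon_{n}^{2}/\sigma^{2})$, the argument of $\Phi^{{\sf c}}$ is dominated by the affine-in-$W$ quantity $v(v+\sigma W)/(\sigma\sqrt{(v^{2}+\sigma^{2}p/(n-1))(1-\epsilon_{n})})$. The standard Gaussian identity $\mathbf{E}[\Phi^{{\sf c}}(a+bW)]=\Phi^{{\sf c}}(a/\sqrt{1+b^{2}/(n-1)})$ evaluates the resulting inner expectation exactly, and a brief algebraic simplification using $\mathbf{r}_{n}^{2}=v^{4}n/(\sigma^{2}(nv^{2}+\sigma^{2}p))$ yields the upper bound $\mathbf{r}_{n}(1+o(1))$ for the $\Phi^{{\sf c}}$-argument, so the expectation is at least $\Phi^{{\sf c}}(\mathbf{r}_{n}(1+o(1)))$.

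The main obstacle is ensuring that every error term is $o(\Phi^{{\sf c}}(\mathbf{r}_{n}))$ after balancing the sequences $\delta_{n},\delta_{n}',\epsilon_{n}$. In particular, the $\chi^{2}$-tail bound $\exp(-cp\epsilon_{n}^{2})$ is negligible only when $p\epsilon_{n}^{2}\gg \mathbf{r}_{n}^{2}$, which fails when $p$ is small. The remedy is a regime split: when $p=o(n\Delta^{2}/\sigma^{2})$, a direct computation gives $\mathbf{r}_{n}=(1+o(1))\Delta/\sigma$, so Proposition \ref{prop:lower_trivial} already delivers $\Psi_{\Delta}\geq c\Phi^{{\sf c}}(\mathbf{r}_{n}(1+\epsilon_{n}))$; in the complementary large-$p$ regime, the $\chi^{2}$ concentration is strong enough to make the error terms negligible and the argument above applies. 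Combining the two regimes yields the claimed inequality.
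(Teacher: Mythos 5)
Your overall strategy is the same as the paper's: apply Proposition \ref{prop:lower:simple} with a Gaussian prior whose variance is slightly inflated above $\Delta^{2}/p$, kill the remainder by the $\chi^{2}_{p}$ lower tail, and reduce $\mathbb{E}_{\pi^{\alpha}_{\bt}}G_{\sigma}(0,\bt)$ to a Gaussian tail at $\mathbf{r}_{n}(1+o(1))$, falling back on Proposition \ref{prop:lower_trivial} when the prior argument is not needed. The genuine gap is in your regime split. The dichotomy ``$p=o(n\Delta^{2}/\sigma^{2})$ versus complementary large-$p$'' is not exhaustive in the way you use it: the complementary case does \emph{not} imply that $p$ is large in absolute terms, and your error control there really requires $p\delta_{n}^{2}\to\infty$ (and similarly $p\epsilon_{n}^{2}\to\infty$). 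Take for instance $p=O(1)$ and $\Delta^{2}\asymp\sigma^{2}p/n$: this falls outside your first case, yet $\mathbf{P}(\chi^{2}_{p}\leq p/(1+\delta_{n}))$ stays bounded away from $0$ for any $\delta_{n}=o(1)$, so the bound of Proposition \ref{prop:lower:simple} is vacuous (main term of constant order minus a remainder of constant order). The theorem still holds there, but only because $\mathbf{r}_{n}\leq\Delta/\sigma$ is bounded, so $\Phi^{{\sf c}}(\mathbf{r}_{n}(1+\epsilon_{n}))$ is itself of constant order and Proposition \ref{prop:lower_trivial} suffices up to the absolute constant $c$; this is exactly the paper's separate first case $\Delta\leq\sigma\log^{2}(n)/\sqrt{n}$, handled by an additive (not multiplicative) comparison of $\Phi^{{\sf c}}(\Delta/\sigma)$ and $\Phi^{{\sf c}}(\mathbf{r}_{n})$. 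The paper's three explicit thresholds are arranged precisely so that the intermediate case forces $p\geq\log^{3}(n)$, which is what makes the $\chi^{2}$ tails beat $\Phi^{{\sf c}}(\mathbf{r}_{n})$; your two-case split needs an analogous third (small-$\Delta$/bounded-$\mathbf{r}_{n}$) case, or explicit quantitative thresholds, to be complete.

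A secondary weak point: you assert that $g(v)=G_{\sigma}(0,\bt)\big|_{\|\bt\|=v}$ is ``easily seen'' to be non-increasing. Rotational invariance indeed gives $G_{\sigma}(0,\bt)=g(\|\bt\|)$, but monotonicity is not pointwise obvious: conditionally on the noise, the event $\{v^{2}+\sigma v(\xi_{1}+\bar\xi)^{\top}u+\sigma^{2}\xi_{1}^{\top}\bar\xi\leq 0\}$ is a quadratic condition in $v$ whose solution set need not be an interval containing $0$, so the indicator is not monotone in $v$ and a proof of monotonicity of the expectation would require real work. The claim is also unnecessary: since $\|\bt\|^{2}=\alpha^{2}\chi^{2}_{p}$ concentrates on a band $[\Delta^{2},\Delta^{2}(1+c\delta_{n})]$, your explicit formula for $g(v)$ (via conditioning on $W$ and $Q$) already yields the bound $g(v)\geq\Phi^{{\sf c}}(\mathbf{r}_{n}(1+o(1)))-o(\Phi^{{\sf c}}(\mathbf{r}_{n}))$ uniformly over $v$ in that band; this is how the paper proceeds, via its event $\mathcal{B}$, rather than through monotonicity. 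With those two repairs (a third regime for bounded $\mathbf{r}_{n}$, and replacing monotonicity by a uniform bound on the concentration band), your argument matches the paper's proof; your one genuinely different ingredient, integrating out the component of the noise along $\bt$ exactly via $\mathbf{E}[\Phi^{{\sf c}}(a+bW)]$ instead of truncating $|\bt^{\top}\xi_{2}|$, is a harmless and slightly cleaner variant.
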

It is worth saying that the result of Theorem \ref{thm:lower_asymptotic} holds without any assumption on $p$ and can be interpreted in a non-asymptotic sense by replacing $\epsilon_{n}$ by some small $c>0$. Moreover, since $\mathbf{r}_n < \Delta/\sigma$, it improves upon the lower bound in Proposition \ref{prop:lower_trivial}. This improvement is most dramatic in the regime $\Delta^{2}/\sigma^{2}= o\left( p/n\right)$ that we call the hard estimation regime.
\section{Spectral initialization}\label{sec:spectral}
In this section, we analyze the non-asymptotic minimax risk of the spectral initializer $\eta^{0}$. As it is the case for the SDP relaxations of the problem, the matrix of interest is the Gram matrix $Y^{\top}Y$. It is well known that is suffers from a bias that grows with $p$. In \cite{royer}, a de-biasing procedure is proposed using an estimator of the covariance of the noise. This step is important to obtain a procedure adaptive to the noise level. Our approach is different but is still adaptive and consists in removing the diagonal entries of the Gram matrix. We give here some intuition about this procedure. Define the linear operator $\mathbf{H}:\mathbf{R}^{n \times n } \to \mathbf{R}^{n \times n }$ as follows:
$$
\forall M \in \mathbf{R}^{n \times n}, \quad \mathbf{H}(M) = M - \text{diag}(M),
$$
where $\text{diag}(M)$ is a diagonal matrix with the same diagonal as $M$. Going back to Proposition \ref{thm:supervised_lower_bound}, we may observe that the oracle $\eta^{**}$ can be written as  
\begin{equation}\label{oracle:hint}
    \eta^{**} = \text{sign}\left(\mathbf{H}\left(Y^{\top}Y\right)\eta\right),
\end{equation}
where the sign is applied entry-wise. This suggests that the matrix $\mathbf{H}\left(Y^{\top}Y\right)$ appears in a natural way. We can decompose $\mathbf{H}(Y^\top Y)$ as follows:
\begin{equation}\label{eq:decomp}
    \mathbf{H}(Y^{\top}Y) = \|\bt\|^{2} \eta\eta^{\top}  +  \mathbf{H}(W^{\top}W) + \mathbf{H}(W^{\top}\bt\eta^{\top} + \eta \bt^{\top}W ) - \|\bt\|^{2}\mathbf{I}_{n}.
\end{equation}
Apart from the scalar factor $\|\bt\|^{2}$, this expression is similar to the SBM or the symmetric spiked model, with the noise term $\mathbf{H}(Y^\top Y)$ having a more complex structure. It turns out that the main driver of the noise is $\mathbf{H}(W^{\top}W)$. A simple lemma (cf. Appendix \ref{sec:lem}) shows that our approach is an alternative to de-biasing the Gram matrix. Specifically, Lemma \ref{lem:diagonal} gives that
$$
\| \mathbf{H}(W^{\top}W) \|_{op} \leq 2\left\| W^{\top}W - \mathbf{E}\left( W^{\top}W\right) \right\|_{op}
$$
almost surely
for any random matrix $W$ with independent columns. Hence, the noise term can be controlled as if its covariance were known. Nevertheless, the operator $\mathbf{H}(.)$ may affect dramatically the signal since it also removes its diagonal entries. Fortunately, the signal term is almost insensitive to this operation since it is a rank-one matrix where the spike energy is spread all over all spike entries. For instance, we have
$$\|\mathbf{H}(\eta \eta^{\top})\|_{op} = \left(1-\frac{1}{n}\right)\|\eta \eta^{\top}\|_{op} .$$ Hence as $n$ grows the signal does not get affected by removing the diagonal terms while we get rid of the bias in the noise. It is worth noticing that our approach succeeds thanks to the specific form of $\eta$ and cannot be generalized to any spiked model. For the general case, a more consistent approach is proposed in \cite{wu_sparse}, where the diagonal entries can be used to achieve optimal estimation accuracy.
\noindent Motivated by \eqref{eq:decomp}, the spectral estimator $\eta^{0}$ is defined by
\begin{equation}\label{def:eta:zero}
    \eta^{0} = \text{sign}(\hat{v}),
\end{equation}
where $\hat{v}$ is the eigenvector corresponding to the top eigenvalue of $\mathbf{H}(Y^{\top}Y)$.
The next result characterizes the non-asymptotic minimax risk of $\eta^{0}$. 
\begin{theorem}\label{thm:spectral}
Let $\Delta>0$ and let $\eta^{0}$ be the estimator given by \eqref{def:eta:zero}. Under the condition $\mathbf{r}_{n} \geq C$, for some absolute constant $C>0$, we have
\begin{equation}\label{eq:sub_n_r}
\underset{(\bt,\eta)\in \Omega_{\Delta}}{\sup}\frac{1}{n}\mathbf{E}_{(\bt,\eta)}r(\eta^{0},\eta) \leq \frac{C'}{\mathbf{r}_{n}^{2}} + \frac{32}{n^{2}},
\end{equation}
and 
\begin{equation}\label{eq:decay}
\underset{(\bt,\eta)\in \Omega_{\Delta}}{\sup}\mathbf{P}_{(\bt,\eta)}\left( \frac{1}{n}\left|\eta^{\top}\eta^{0}\right| \leq 1 - \frac{\log{n}}{n} - \frac{C'}{\mathbf{r}_{n}^{2}} \right) \leq \epsilon_{n}\Phi^{{\sf c}}(\mathbf{r}_{n}),
\end{equation}
for some sequence $\epsilon_{n}$ such that $\epsilon_{n}=o(1)$ and some absolute constant $C'>0$.
\end{theorem}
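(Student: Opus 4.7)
The plan is to view $\mathbf{H}(Y^\top Y)$ as a rank-one signal plus small perturbation via decomposition (\ref{eq:decomp}), apply a Davis--Kahan-type bound to the top eigenvector $\hat v$, convert the eigenvector error to a Hamming distance, and then control the perturbation with Gaussian concentration.

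Because a scalar shift leaves eigenvectors unchanged, I first work with $A := \mathbf{H}(Y^\top Y) + \|\bt\|^2 \mathbf{I}_n$, which by (\ref{eq:decomp}) decomposes as $A = \|\bt\|^2 \eta\eta^\top + N$ with
\begin{equation*}
N := \mathbf{H}(W^\top W) + \mathbf{H}(W^\top \bt \eta^\top + \eta \bt^\top W).
\end{equation*}
The signal has $\eta/\sqrt n$ as its top unit eigenvector, with eigenvalue $\|\bt\|^2 n$ and spectral gap $\|\bt\|^2 n$. On the event $\{\|N\|_{op} \le \|\bt\|^2 n/2\}$, the Davis--Kahan $\sin\Theta$ theorem gives, for the sign $\nu\in\{\pm 1\}$ achieving alignment, $\|\hat v - \nu \eta/\sqrt n\|^2 \le 8\|N\|_{op}^2/(\|\bt\|^4 n^2)$. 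A sign disagreement between $\hat v_i$ and $\nu \eta_i/\sqrt n$ forces $(\hat v_i - \nu\eta_i/\sqrt n)^2 \ge 1/n$, so summing over coordinates yields $r(\eta^0,\eta)/n \le 16\|N\|_{op}^2/(\|\bt\|^4 n^2)$.

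Next I bound $\|N\|_{op}$. By Lemma~\ref{lem:diagonal}, $\|\mathbf{H}(W^\top W)\|_{op} \le 2\|W^\top W - \sigma^2 p \mathbf{I}_n\|_{op}$, and standard Gaussian random matrix bounds give $\|W^\top W - \sigma^2 p \mathbf{I}_n\|_{op} \le C\sigma^2(\sqrt{np}\vee n)$ with probability at least $1 - 16/n^2$. For the cross term, $\|\mathbf{H}(u\eta^\top + \eta u^\top)\|_{op} \le C\|u\|\sqrt n$ with $u := W^\top \bt \sim \mathcal{N}(0, \sigma^2\|\bt\|^2 \mathbf{I}_n)$, so $\|u\| \le C\sigma\|\bt\|\sqrt n$ on a high-probability event. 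Combining the two, dividing by $(\|\bt\|^2 n)^2$ and using $\|\bt\| \ge \Delta$, I obtain $\|N\|_{op}^2/(\|\bt\|^4 n^2) \le C'/\mathbf{r}_n^2$ on a good event of probability $\ge 1 - 16/n^2$ (the assumption $\mathbf{r}_n \ge C$ ensures that $\|N\|_{op}$ is indeed less than half the gap and absorbs the lower-order terms). On the complement event, $r/n \le 2$ trivially, which yields (\ref{eq:sub_n_r}).

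The delicate part is (\ref{eq:decay}): the target $\epsilon_n \Phi^{\sf c}(\mathbf{r}_n)$ is sub-polynomial as soon as $\mathbf{r}_n^2 \gg \log n$, so polynomial Wishart tail bounds do not suffice. My plan is to apply Gaussian Lipschitz concentration separately to $\|\mathbf{H}(W^\top W)\|_{op}$ and to $\|u\|$, both of which are Lipschitz functions of the underlying Gaussian noise vector with constants $\sigma$ and $\sigma\|\bt\|$, respectively, yielding sub-Gaussian deviation bounds around their means. The buffer $\log n/n$ in the target absorbs a deviation of order $\sqrt{\log n}$ in $\|N\|_{op}/(\|\bt\|^2\sqrt n)$. \emph{The main obstacle} is to calibrate the resulting sub-Gaussian exponents so that the tail decays like $e^{-c\mathbf{r}_n^2}$, matching $\Phi^{\sf c}(\mathbf{r}_n)$ up to a $o(1)$ factor; this is where the specific form (\ref{def:SNR}) of the SNR is essential, since $1/\mathbf{r}_n^2$ is precisely the combination of the quadratic (Wishart-type, contributing $\sigma^4 p/(n\Delta^4)$) and bilinear (cross-term, contributing $\sigma^2/\Delta^2$) Gaussian deviations that appear in $\|N\|_{op}^2/(\|\bt\|^4 n^2)$.
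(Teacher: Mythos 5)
Your treatment of the bound \eqref{eq:sub_n_r} is essentially the paper's argument: the same decomposition of $\mathbf{H}(Y^{\top}Y)$ into $\|\bt\|^{2}\eta\eta^{\top}$ plus a perturbation (Lemma \ref{lem:diagonal} for the hollowed Wishart part), Davis--Kahan for $\hat v$, and the rounding step of Lemma \ref{lem:round}; handling the diagonal by adding back $\|\bt\|^{2}\mathbf{I}_{n}$ instead of carrying $-\|\bt\|^{2}\mathbf{I}_n/n$ in the noise, and passing to expectation through a good event rather than through the expectation bounds of Lemmas \ref{lem:outside} and \ref{lem:inside}, are cosmetic differences.

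For \eqref{eq:decay}, however, there is a genuine gap, and it sits exactly at the step you flag as ``the main obstacle.'' Your plan rests on the claim that $\|\mathbf{H}(W^{\top}W)\|_{op}$ is a Lipschitz function of the Gaussian noise with constant $\sigma$, so that Gaussian concentration gives a sub-Gaussian tail with that constant. This is false: the map $W\mapsto\|W^{\top}W-\mathbf{E}(W^{\top}W)\|_{op}$ is quadratic in the Gaussian entries, hence only locally Lipschitz with constant of order $\sigma\|W\|_{op}\asymp\sigma^{2}(\sqrt{n}+\sqrt{p})$, and its deviations are sub-exponential, not sub-Gaussian at scale $\sigma$. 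Moreover, repairing this by truncating on $\{\|W\|_{op}\le C\sigma(\sqrt n+\sqrt p)\}$ does not suffice as stated, because the truncation failure probability $e^{-c(n+p)}$ need not be $o(\Phi^{{\sf c}}(\mathbf{r}_{n}))$ uniformly over $\|\bt\|\ge\Delta$ (nothing prevents $\mathbf{r}_{n}^{2}\gg n+p$ when $\Delta/\sigma$ is very large). The correct ingredient --- and the one the paper uses --- is a Bernstein-type bound on $\|W^{\top}W-\mathbf{E}(W^{\top}W)\|_{op}$ (Lemma \ref{lem:inside}, via Theorem 4.6.1 of \cite{vershinynHigh}): at the deviation level $\sqrt{\log n/(2n)}\,\|\bt\|^{2}$ allowed by the $\log n/n$ buffer, the tail is at most
\begin{equation*}
\exp\Bigl(-c\sqrt{n\log n}\,\tfrac{\|\bt\|^{2}}{\sigma^{2}}\Bigl(1\wedge \tfrac{\sqrt{n\log n}\,\|\bt\|^{2}}{p\sigma^{2}}\Bigr)\Bigr)\le e^{-c\sqrt{\log n}\,\mathbf{r}_{n}^{2}}=o\bigl(\Phi^{{\sf c}}(\mathbf{r}_{n})\bigr),
\end{equation*}
uniformly over $\|\bt\|\ge\Delta$, because the exponent scales with $\|\bt\|^{2}$. (Your Lipschitz claim is correct for the cross term $\|W^{\top}\bt\|$, with constant $\sigma\|\bt\|$, and there the sub-Gaussian calibration does go through; the paper instead uses the exponential tail of Lemma \ref{lem:outside}.) Since you explicitly leave the calibration of the exponents open and the tool you propose for the Wishart term would not deliver it, the proof of \eqref{eq:decay} is incomplete as written; substituting the sub-exponential bound above closes it and recovers the paper's argument.
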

As we may expect the appropriate Hamming distance risk is decreasing with respect to $\mathbf{r}_{n}$.  The condition $\mathbf{r}_{n} = \Omega(1)$ is very natural, since it is necessary even for detection as shown in \cite{banks}. The residual term $\frac{32}{n^{2}}$ is due to removing the diagonal and can be seen as the price to pay for adaptation.
When $\mathbf{r}_{n}$ gets larger than $n$, removing the diagonal terms may be sub-optimal as the corresponding error dominates in \eqref{eq:sub_n_r}. 

As $n,\mathbf{r}_{n} \to \infty$, $\eta^{0}$ achieves almost full recovery (cf. Definition \ref{def:possible}). We show later that this condition is optimal but cannot show that $\eta^{0}$ is rate optimal. In general spiked models, the rate decay \eqref{eq:decay} is optimal. We rely on asymptotic random matrix theory, to elaborate on this point. In \cite{benaych}, it is shown that, in the asymptotics when $p/n \to c \in (0,1]$ and when the noise is Gaussian, detection is possible only for $\Delta^{2}\geq \sqrt{c}\sigma^{2}$. Moreover, the asymptotic correlation between $\eta$ and its spectral approximation is given by $ \sqrt{1 - \frac{c\sigma^{2}+\Delta^{2}}{\Delta^{2}(1+\Delta^{2}/\sigma^{2})}}$. When $\mathbf{r}_{n}= \Omega(1)$, we observe that $ \frac{c\sigma^{2}+\Delta^{2}}{\Delta^{2}(1+\Delta^{2}/\sigma^{2})} \asymp \frac{1}{\mathbf{r}_{n}^{2}} $. Hence, the decay in Theorem \ref{thm:spectral} is expected for general spiked models, but not necessarily rate optimal in our specific setting.
 Strikingly, thanks to the specific structure of $\eta$, it is possible to make the previous decay exponentially small as shown in \cite{leoffler} and \cite{abbe2020ell_p}.

\section{A rate optimal practical algorithm}\label{sec:upper}
In this section, we present an algorithm that is minimax optimal, adaptive to $\Delta$ and $\sigma$ and faster than SDP relaxation. In the same spirit as in \cite{zhou}, we are tempted by using Lloyd's iterations. If properly initialized, Lloyd's algorithm may achieve the optimal rate under mild conditions after only a logarithmic number of steps. We present here a variant of Lloyd's iterations. Motivated by \eqref{oracle:hint}, and given an estimator $\hat{\eta}^{0}$, we define a sequence of estimators $(\hat{\eta}^{k})_{k \geq 0}$ such that
\begin{equation}\label{def:iter}
    \forall k \geq 0, \quad \hat{\eta}^{k+1} = \text{sign}\left( \mathbf{H}\left(Y^{\top}Y \right)\hat{\eta}^{k}\right).
\end{equation}
The classical Lloyd's iterations correspond to the procedure \eqref{def:iter}, where $ \mathbf{H}\left(Y^{\top}Y \right)$ is replaced by $Y^{\top}Y$. If the initialization is good in a sense that we describe below, then at each iteration $\hat{\eta}^{k}$ gets closer to $\eta$ and achieves the minimax optimal rate after a logarithmic number of steps. The logarithmic number of steps is crucial computationally in order to obtain the desired exponential upper bound, as it is the case in many other iterative procedures.  

\begin{theorem}\label{thm:initial_lloyds}
Let $\Delta>0$ and let $\hat{\eta}^{0}$ be an estimator satisfying
$$ \frac{1}{n} \eta^{\top}\hat{\eta}^{0} \geq 1 - \frac{C'}{\mathbf{r}_{n}^{2}}-\nu_{n} $$
for some $C'>0$ and $\nu_{n} = o(1)$. Let $(\hat{\eta}^{k})_{k\geq0}$ be the corresponding iterative sequence \eqref{def:iter}. If $\mathbf{r}_{n} \geq C$ for some $C>0$, then after $k=\lfloor 3\log{n}\rfloor$ steps, we have
$$
\underset{(\bt,\eta) \in \Omega_{\Delta}}{\sup}\mathbf{E}_{(\bt,\eta)}r(\hat{\eta}^{k},\eta) \leq C'\mathbf{r}_{n}^{2}\underset{\|\bt\|\geq\Delta}{\sup}G_{\sigma}\left( \epsilon_{n}+ \frac{C'}{\mathbf{r}_{n}},\bt\right) + \epsilon_{n}\Phi^{{\sf c}}(\mathbf{r}_{n}),
$$
for some sequence $\epsilon_{n}$ such that $\epsilon_{n} = o(1)$ and $C'>0$.
\end{theorem}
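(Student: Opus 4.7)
The plan is to analyze the iteration entry-wise and exhibit a geometric contraction of the Hamming distance down to a supervised-oracle floor. Using the decomposition \eqref{eq:decomp}, I would write $\mathbf{H}(Y^\top Y) = \|\bt\|^2\eta\eta^\top - \|\bt\|^2\mathbf{I}_n + Z$ with $Z := \mathbf{H}(\eta\bt^\top W + W^\top\bt\eta^\top) + \mathbf{H}(W^\top W)$. Coordinate-wise this gives
$$[\mathbf{H}(Y^\top Y)\hat{\eta}^k]_i = \|\bt\|^2\eta_i(\eta^\top\hat{\eta}^k) - \|\bt\|^2\hat{\eta}^k_i + [Z\hat{\eta}^k]_i,$$
so after sign-aligning with $\eta$ the event $\{\hat{\eta}^{k+1}_i \neq \eta_i\}$ is contained in $\{\eta_i[Z\hat{\eta}^k]_i \leq -\|\bt\|^2(\eta^\top\hat{\eta}^k - \eta_i\hat{\eta}^k_i)\}$. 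By linearity $Z\hat{\eta}^k = Z\eta + Z(\hat{\eta}^k - \eta)$: the first summand is the supervised-oracle noise of \eqref{oracle:hint}, whose per-coordinate tail reproduces $G_\sigma(\cdot,\bt)$ as in the proof of Proposition \ref{thm:supervised_lower_bound}, while the second is a perturbation controlled by $m_k := |\{i:\hat{\eta}^k_i\neq \eta_i\}|$ via $\|\hat{\eta}^k - \eta\|_2^2 = 4m_k$.

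The heart of the argument is a one-step contraction lemma. On a high-probability event $\mathcal{A}$ where Lemma \ref{lem:diagonal} and standard Gaussian concentration give $\|\mathbf{H}(W^\top W)\|_{op} \lesssim \sigma^2\sqrt{n(p\vee n)}$ and $\|W^\top\bt\|_2 \lesssim \sigma\|\bt\|\sqrt{n}$, one obtains $\|Z\|_{op}^2 \lesssim n^2\sigma^2\|\bt\|^2 + n\sigma^4(p\vee n)$. A Chebyshev counting at threshold of order $n\|\bt\|^2$ — the size of the signal when $\alpha_k := \eta^\top\hat{\eta}^k/n \approx 1$ — then bounds the number of indices where the perturbation dominates the signal by
$$\frac{C\,\|Z\|_{op}^2\, m_k}{n^2\|\bt\|^4}\;\lesssim\;\frac{m_k}{\mathbf{r}_n^2},$$
using that $\sigma^2/\|\bt\|^2 + \sigma^4(p\vee n)/(n\|\bt\|^4) \lesssim 1/\mathbf{r}_n^2$ at $\|\bt\|\geq\Delta$ in the regime $\mathbf{r}_n \geq C$. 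Combined with an entry-wise tail analysis of $Z\eta$ that mirrors the calculation of $G_\sigma(0,\bt)$ in the supervised setting but with effective signal $\|\bt\|^2(n\alpha_k - 1)$ — a $(1 - O(1/\mathbf{r}_n))$ factor of the oracle one — this produces a recursion of the form
$$m_{k+1}\;\leq\;\frac{C}{\mathbf{r}_n^2}\,m_k \;+\; C'\mathbf{r}_n^2\,G_\sigma\!\bigl(\epsilon_n + \tfrac{C'}{\mathbf{r}_n},\bt\bigr),$$
where the shift $C'/\mathbf{r}_n$ inside $G_\sigma$ absorbs the $O(1/\mathbf{r}_n)$ signal deficit created by the imperfect previous iterate.

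Iteration is then straightforward. Starting from $m_0 \leq n(C'/\mathbf{r}_n^2 + \nu_n)$ and using that $\mathbf{r}_n \geq C$ is large enough so that $C/\mathbf{r}_n^2 < 1$, after $k = \lfloor 3\log n\rfloor$ steps $(C/\mathbf{r}_n^2)^k m_0 = o(1)$, so only the floor $C'\mathbf{r}_n^2 G_\sigma(\epsilon_n + C'/\mathbf{r}_n,\bt)$ survives on $\mathcal{A}$. Off $\mathcal{A}$, the Hamming loss is trivially at most $n$, contributing $n\mathbf{P}(\mathcal{A}^c) = o(\Phi^{{\sf c}}(\mathbf{r}_n))$, which is absorbed into the residual $\epsilon_n\Phi^{{\sf c}}(\mathbf{r}_n)$. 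Taking expectations then yields the claimed bound.

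The main obstacle will be producing the sharp shift $\epsilon_n + C'/\mathbf{r}_n$ inside $G_\sigma$: a crude bound on the perturbation would yield only $G_\sigma(c,\bt)$ with $c$ of constant order, losing the constant in the exponent and destroying the sharp phase transition of Section \ref{sec:sharp}. This demands that the signal deficit $1-\alpha_k$ be tracked quantitatively across the iterations and that the dependence between $\hat{\eta}^k$ and the noise $W$ — accumulated through $k$ nonlinear steps — be controlled by a uniform argument over admissible configurations $\hat{\eta}^k - \eta$ on $\mathcal{A}$, combining an $\ell_\infty$-type coordinate bound on $Z\eta$ with an $\ell_2$-type perturbation bound compatible with the Gaussian-tail scale underlying $G_\sigma$.
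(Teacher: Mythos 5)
Your proposal follows essentially the same route as the paper's proof: the same coordinate-wise decomposition of $\mathbf{H}(Y^{\top}Y)\hat{\eta}^{k}$ into signal, supervised-oracle noise $Z\eta$ (whose per-coordinate failures at a threshold of order $\|\bt\|^{2}(\epsilon_n + C'/\mathbf{r}_n)$ yield $G_{\sigma}$), and a perturbation $Z(\hat{\eta}^{k}-\eta)$ controlled uniformly by $\|Z\|_{op}$, combined with a Chebyshev-type counting step giving a geometric contraction of the Hamming error, an induction keeping the correlation $\eta^{\top}\hat{\eta}^{k}/n$ above $1-C'/\mathbf{r}_n^{2}-\nu_n$, $\lfloor 3\log n\rfloor$ iterations to kill the initialization error, and the bad event absorbed into $o(\Phi^{\sf c}(\mathbf{r}_n))$. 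The concerns you flag at the end (tracking the signal deficit and the dependence on $W$ across iterations) are resolved exactly as you suggest, by the operator-norm bound and the correlation induction, so there is no genuine gap.
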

Recall that $G(t,\bt)$ is close to $G(0,\bt)$ for small $t$.
Theorem \ref{thm:initial_lloyds} can be interpreted as follows. Given a good initialization, the iterative procedure \eqref{def:iter} achieves an error close to the supervised learning risk within a logarithmic number of steps. Observing that under the condition $\mathbf{r}_{n} \geq C$ for some $C>0$, the spectral estimator $\eta^{0}$ is a good initializer, we state a general result showing that our variant of Lloyd's iterations initialized with a spectral estimator is minimax optimal. 
\begin{theorem}\label{thm:spectral_lloyds}
Let $\Delta>0$. Let $\eta^{0}$ be the spectral estimator defined in \eqref{def:eta:zero} and let $(\eta^{k})_{k \geq0}$ be the iterative sequence \eqref{def:iter}. Assume that $\mathbf{r}_{n} > C$ for some $C>0$. 
Then, after $k=\lfloor 3\log{n}\rfloor$ steps we have
$$
\underset{(\bt,\eta)\in \Omega_{\Delta}}{\sup}\mathbf{E}_{(\bt,\eta)}r(\eta^{k},\eta) \leq C'\Phi^{{\sf c}}\left(\mathbf{r}_{n}\left(1-\epsilon_{n}-\frac{C'\log{\mathbf{r}_{n}}}{\mathbf{r}_{n}}\right)\right),
$$
for some sequence $\epsilon_{n}$ such that $\epsilon_{n} = o(1)$ and $C'>0$.
\end{theorem}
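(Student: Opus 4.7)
The plan is to combine the spectral guarantee of Theorem \ref{thm:spectral} with the iterative refinement of Theorem \ref{thm:initial_lloyds}, and then translate the resulting bound, which is expressed through the auxiliary function $G_{\sigma}$, into a sharp Gaussian tail of the form $\Phi^{{\sf c}}$. In one sentence: verify that $\eta^{0}$ is a valid initializer, invoke Theorem \ref{thm:initial_lloyds}, approximate $G_{\sigma}$ by a one-dimensional Gaussian tail, and absorb the leading factor $\mathbf{r}_{n}^{2}$ via a small shift of the argument using Mills' ratio.

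First I would verify that $\eta^{0}$ meets the initialization hypothesis of Theorem \ref{thm:initial_lloyds}. By \eqref{eq:decay}, on an event $\mathcal{E}$ with $\mathbf{P}_{(\bt,\eta)}(\mathcal{E}^{c}) \leq \epsilon_{n}\Phi^{{\sf c}}(\mathbf{r}_{n})$ one has $\frac{1}{n}|\eta^{\top}\eta^{0}| \geq 1 - \log n/n - C'/\mathbf{r}_{n}^{2}$. Since both the loss $r$ and the iteration \eqref{def:iter} are equivariant under a global sign flip (flipping $\eta^{0}\mapsto-\eta^{0}$ flips every $\eta^{k}$), I can pick the sign so that the absolute value disappears, giving the initialization hypothesis of Theorem \ref{thm:initial_lloyds} with $\nu_{n} = \log n/n = o(1)$ on $\mathcal{E}$. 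On $\mathcal{E}^{c}$ the crude bound $r \leq 2n$ contributes at most $2n\epsilon_{n}\Phi^{{\sf c}}(\mathbf{r}_{n})$ to the expected risk, which the Mills slack developed below absorbs.

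Next I would apply Theorem \ref{thm:initial_lloyds} with $k = \lfloor 3\log n \rfloor$ on $\mathcal{E}$ to obtain
\begin{equation*}
\mathbf{E}_{(\bt,\eta)}\bigl[r(\eta^{k},\eta)\mathbf{1}_{\mathcal{E}}\bigr] \leq C'\mathbf{r}_{n}^{2}\sup_{\|\bt\|\geq \Delta}G_{\sigma}\!\bigl(\epsilon_{n} + C'/\mathbf{r}_{n}, \bt\bigr) + \epsilon_{n}\Phi^{{\sf c}}(\mathbf{r}_{n}).
\end{equation*}
The key non-trivial step is then a sharp tail approximation of $G_{\sigma}$. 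Expanding the inner product in \eqref{def:G}, I would decompose the random variable whose tail defines $G_{\sigma}$ as $Z = \|\bt\|^{2} + S + T$, with $S$ a centered Gaussian of variance $\sigma^{2}\|\bt\|^{2}n/(n-1)$ and $T = \frac{\sigma^{2}}{n-1}\xi_{1}^{\top}\sum_{j\geq 2}\xi_{j}$ a centered non-Gaussian bilinear form of variance $\asymp \sigma^{4}p/n$. Moderate-deviation/CLT estimates, in the spirit of those underlying Theorem \ref{thm:lower_asymptotic}, should then yield, for $t = o(1)$,
\begin{equation*}
G_{\sigma}(t,\bt) \leq \Phi^{{\sf c}}\!\bigl(\mathbf{r}_{n}(\bt)(1-t)\bigr)(1+o(1)), \qquad \mathbf{r}_{n}(\bt) := \frac{\|\bt\|^{2}/\sigma^{2}}{\sqrt{\|\bt\|^{2}/\sigma^{2}+p/n}}.
\end{equation*}
Rotational invariance of the isotropic $\xi_{i}$ makes $G_{\sigma}$ depend on $\bt$ only through $\|\bt\|$, and $\mathbf{r}_{n}(\bt)$ is increasing in $\|\bt\|$, so the supremum is attained at $\|\bt\| = \Delta$, producing $\Phi^{{\sf c}}(\mathbf{r}_{n}(1 - \epsilon_{n} - C'/\mathbf{r}_{n}))(1+o(1))$.

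To finish, I would absorb the prefactor $\mathbf{r}_{n}^{2}$ by a further shift of the Gaussian argument. The tail estimate $\Phi^{{\sf c}}(x)\asymp e^{-x^{2}/2}/x$ gives
\begin{equation*}
\mathbf{r}_{n}^{2}\Phi^{{\sf c}}\!\bigl(\mathbf{r}_{n}(1-a)\bigr) \leq C\Phi^{{\sf c}}\!\bigl(\mathbf{r}_{n}(1-a) - 2\log\mathbf{r}_{n}/\mathbf{r}_{n}\bigr)
\end{equation*}
for $\mathbf{r}_{n}$ large, and plugging in $a = \epsilon_{n} + C'/\mathbf{r}_{n}$ with $C'/\mathbf{r}_{n} + 2\log\mathbf{r}_{n}/\mathbf{r}_{n}^{2} \leq C''\log\mathbf{r}_{n}/\mathbf{r}_{n}$ produces exactly the form stated in the theorem, after an innocuous redefinition of $\epsilon_{n}$. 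The residual bad-event contribution $2n\epsilon_{n}\Phi^{{\sf c}}(\mathbf{r}_{n})$ and the additive term $\epsilon_{n}\Phi^{{\sf c}}(\mathbf{r}_{n})$ from Theorem \ref{thm:initial_lloyds} are $o(1)$ of the resulting tail thanks to the same Mills slack. The main obstacle is step 3: the sharp non-asymptotic tail estimate for $G_{\sigma}$, which requires correctly handling the non-Gaussian bilinear piece $T$ with precisely the right variance proxy $\sigma^{4}p/n$, uniformly in the shift $t=\epsilon_{n}+C'/\mathbf{r}_{n}$. This is exactly what causes the sharp SNR $\mathbf{r}_{n}$ to appear in place of the naive $\Delta/\sigma$.
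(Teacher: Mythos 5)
Your overall architecture coincides with the paper's: verify via \eqref{eq:decay} that $\eta^{0}$ satisfies the initialization hypothesis, feed it into Theorem \ref{thm:initial_lloyds}, bound $\sup_{\|\bt\|\ge\Delta}G_{\sigma}\bigl(\epsilon_{n}+C'/\mathbf{r}_{n},\bt\bigr)$ by a Gaussian tail, reduce the supremum to $\|\bt\|=\Delta$ by monotonicity of $x\mapsto x/\sqrt{x+p/n}$, and absorb the prefactor $\mathbf{r}_{n}^{2}$ by shifting the argument of $\Phi^{{\sf c}}$ by $O(\log\mathbf{r}_{n}/\mathbf{r}_{n})$ (the paper's closing inequality $x^{2}\Phi^{{\sf c}}(y)\le c_{1}\Phi^{{\sf c}}(y-c_{2}\log x)$ is exactly your Mills-ratio step). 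The problem is that the one step you yourself flag as the main obstacle --- the sharp tail bound for $G_{\sigma}$ --- is only asserted, via unspecified ``moderate-deviation/CLT estimates,'' and moreover in a form, $G_{\sigma}(t,\bt)\le\Phi^{{\sf c}}\bigl(\mathbf{r}_{n}(\bt)(1-t)\bigr)(1+o(1))$, that is stronger than what such arguments give and than what is needed. The variable $S+T$ is not Gaussian, and any control of the bilinear part $T$ (equivalently, of the random conditional mean and variance) produces additive failure probabilities; when $\mathbf{r}_{n}\to\infty$ these cannot be folded into a multiplicative $(1+o(1))$ factor --- they must either be shown to be $o(\Phi^{{\sf c}}(\mathbf{r}_{n}))$ or pushed into the argument of $\Phi^{{\sf c}}$. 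The paper's proof does precisely this: conditionally on the averaged noise vector the statistic is exactly Gaussian, so $G_{\sigma}$ is an expectation of a Gaussian tail with random argument; one introduces the event $\mathcal{A}=\{\|\xi_{2}\|^{2}/(n-1)\le p/(n-1)+\zeta_{n}\|\bt\|^{2}\}\cap\{|\bt^{\top}\xi_{2}|\le\sqrt{n-1}\,\beta_{n}\|\bt\|^{2}\}$, chooses $\beta_{n}=\zeta_{n}=\sqrt{\log n/n}$ so that $\mathbf{P}(\mathcal{A}^{{\sf c}})\le 4e^{-c\sqrt{\log n}\,\mathbf{r}_{n}}=o(\Phi^{{\sf c}}(\mathbf{r}_{n}))$, and on $\mathcal{A}$ gets $\Phi^{{\sf c}}\bigl(\mathbf{r}_{n}(\bt)(1-t-\epsilon'_{n})\bigr)$, i.e.\ the slack enters the argument, not a prefactor. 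Supplying this quantitative argument, with thresholds calibrated against $\Phi^{{\sf c}}(\mathbf{r}_{n})$, is the actual content of the theorem, so the proposal as written does not yet constitute a proof.

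Two smaller points. First, invoking Theorem \ref{thm:initial_lloyds} ``on $\mathcal{E}$'' is an abuse: that theorem assumes a deterministic initialization guarantee, and conditioning on $\mathcal{E}$ changes the law of the noise; the correct fix is to intersect $\mathcal{E}$ with the events $\mathbf{B},\mathbf{C}$ inside the proof of Theorem \ref{thm:initial_lloyds} and add $\mathbf{P}(\mathcal{E}^{{\sf c}})$ to the failure terms, which is what the paper's terse ``combining'' implicitly does. Second, your claim that the Mills slack absorbs the bad-event contribution $2n\epsilon_{n}\Phi^{{\sf c}}(\mathbf{r}_{n})$ fails when $\mathbf{r}_{n}$ stays bounded: the slack only buys a factor of order $\mathbf{r}_{n}^{C\mathbf{r}_{n}}e^{\epsilon_{n}\mathbf{r}_{n}^{2}}$, while $n\epsilon_{n}$ may diverge since $\epsilon_{n}\Phi^{{\sf c}}(\mathbf{r}_{n})$ coming from Theorem \ref{thm:spectral} is only of order $e^{-c\sqrt{\log n}\,\mathbf{r}_{n}^{2}}$. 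This is harmless for the normalized risk $\tfrac1n\mathbf{E}\,r$, which is what the argument (in the paper as well) actually controls, but it should not be waved away for the unnormalized risk.
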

Notice that the upper bound in Theorem \ref{thm:spectral_lloyds} is almost optimal, and gets closer to the optimal minimax rate as $n,\mathbf{r}_{n} \to \infty$. Hence, under mild conditions, we get a matching upper bound to the lower bound in Theorem \ref{thm:lower_asymptotic}. Moreover, we figure out that a good initialization combined with smart iterations is almost equivalent to the supervised learning oracle. In fact, the rate in Theorem \ref{thm:spectral_lloyds} is almost the same as the rate of the supervised oracle $\eta^{**}$. We conclude that unsupervised learning is asymptotically as easy as supervised learning in the Gaussian mixture model. The next proposition gives a full picture of the minimax risk $\Psi_{\Delta}$.
\begin{proposition}\label{prop:match_lower}
Let $\Delta>0$. For some absolute constants $c_{1},c_{2},C_{1},C_{2}>0$ and $n$ large enough, we have
$$
 C_{1}e^{-c_{1}\mathbf{r}^{2}_{n}} \leq \Psi_{\Delta} \leq C_{2}e^{-c_{2}\mathbf{r}^{2}_{n}}.
$$
\end{proposition}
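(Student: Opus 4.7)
The plan is to obtain both bounds as direct consequences of Theorem~\ref{thm:lower_asymptotic} and Theorem~\ref{thm:spectral_lloyds} by translating the $\Phi^{{\sf c}}$-type bounds of those theorems into the purely exponential form claimed here. The only extra tool needed is the classical two-sided Mills-ratio control of the Gaussian tail: for $t>0$,
$$
\frac{t}{(1+t^{2})\sqrt{2\pi}}\,e^{-t^{2}/2}\ \leq\ \Phi^{{\sf c}}(t)\ \leq\ \tfrac{1}{2}\,e^{-t^{2}/2}.
$$

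For the upper bound, I would split on whether $\mathbf{r}_{n}$ exceeds the absolute constant $C$ appearing in Theorem~\ref{thm:spectral_lloyds}. If $\mathbf{r}_{n}\leq C$, the claim is trivial since the normalized Hamming risk satisfies $\frac{1}{n}r(\tilde{\eta},\eta)\leq 1$, so $\Psi_{\Delta}\leq 1\leq e^{c_{2}C^{2}}e^{-c_{2}\mathbf{r}_{n}^{2}}$, which is absorbed into $C_{2}$. If instead $\mathbf{r}_{n}>C$, Theorem~\ref{thm:spectral_lloyds} together with the upper Mills bound gives
$$
\Psi_{\Delta}\ \leq\ C'\exp\!\Bigl(-\tfrac{1}{2}\mathbf{r}_{n}^{2}\bigl(1-\epsilon_{n}-C'\log\mathbf{r}_{n}/\mathbf{r}_{n}\bigr)^{2}\Bigr).
$$
Expanding the square, the cross and square terms in $\epsilon_{n}$ and $\log\mathbf{r}_{n}/\mathbf{r}_{n}$ contribute only a $o(1)$ correction to the coefficient of $\mathbf{r}_{n}^{2}$, so any fixed $c_{2}<1/2$ works once $n$ is large enough.

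For the lower bound, I would again separate the regimes. If $\mathbf{r}_{n}\leq C$, then $\mathbf{r}_{n}(1+\epsilon_{n})$ stays bounded, so $\Phi^{{\sf c}}(\mathbf{r}_{n}(1+\epsilon_{n}))$ is bounded below by a positive absolute constant and hence exceeds $C_{1}e^{-c_{1}\mathbf{r}_{n}^{2}}$ once $C_{1}$ is taken small enough. If $\mathbf{r}_{n}>C$, the lower Mills bound yields
$$
\Phi^{{\sf c}}\!\bigl(\mathbf{r}_{n}(1+\epsilon_{n})\bigr)\ \geq\ c\,\mathbf{r}_{n}^{-1}\exp\!\bigl(-\tfrac{1}{2}\mathbf{r}_{n}^{2}(1+\epsilon_{n})^{2}\bigr).
$$
Plugging into Theorem~\ref{thm:lower_asymptotic} and choosing any $c_{1}$ slightly above $1/2$ absorbs both the polynomial factor $\mathbf{r}_{n}^{-1}$ and the $(1+\epsilon_{n})^{2}$ inflation in the exponent for $n$ large.

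I do not expect any real obstacle in the proof itself: the proposition is essentially a packaging step, with the substantive content already sitting inside Theorems~\ref{thm:lower_asymptotic} and~\ref{thm:spectral_lloyds}. The only point requiring care is the bookkeeping of the two $o(1)$ sequences $\epsilon_{n}$ produced by those theorems, which must be absorbed uniformly in $(\bt,\eta)\in\Omega_{\Delta}$ by a fixed relaxation of the exponent constants; this is routine once $n$ is taken sufficiently large, matching the hypothesis of the statement.
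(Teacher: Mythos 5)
Your argument is correct and is essentially the paper's own proof: both bounds are obtained from Theorem \ref{thm:lower_asymptotic} and Theorem \ref{thm:spectral_lloyds} after splitting on whether $\mathbf{r}_{n}$ exceeds an absolute constant, with Gaussian tail estimates converting the $\Phi^{{\sf c}}$ bounds into pure exponentials (the paper just uses the cruder arguments $\Phi^{{\sf c}}(2\mathbf{r}_{n})$ below and $\Phi^{{\sf c}}(\mathbf{r}_{n}/4)$ above). One bookkeeping caveat: since $\mathbf{r}_{n}$ need not tend to infinity, the term $C'\log{\mathbf{r}_{n}}/\mathbf{r}_{n}$ is not $o(1)$ uniformly on the regime $\mathbf{r}_{n}>C$, so the claim that ``any fixed $c_{2}<1/2$ works'' is too strong; you should either enlarge the split threshold or settle for a smaller $c_{2}$ (which is exactly what the paper's factor $\mathbf{r}_{n}/4$ accomplishes), and since the proposition only asserts existence of some $c_{2}>0$ this does not affect correctness.
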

Notice that the procedure we present here has a different rate of decay compared to the spectral procedure \eqref{def:eta:zero}. Our proof technique, makes it possible to turn an estimator with weak recovery guarantees into a sharply optimal one. Analysis of the iterations is almost deterministic as long as the noise is  sub-Gaussian. 

Recent papers by \cite{xia} and \cite{abbeentriwise} show that spectral algorithms can achieve exact recovery using refined sup-norm perturbation techniques. Although their results are surprising, they match optimal conditions for exact recovery in the Gaussian mixture model only in the zone $\mathbf{r}_{n} \asymp \Delta/\sigma$. 

\section{Asymptotic analysis. Phase transitions}\label{sec:sharp}

This section deals with asymptotic analysis of the problem of
community detection in the two component Gaussian mixture model. The results are derived as corollaries of the
minimax bounds of previous sections. We will assume that $n\to
\infty$ and that parameters $p, \sigma$ and $\Delta$ depend on $n$. For the sake of readability we do not equip some parameters with the index $n$.

The two asymptotic properties we study here are \textit{exact
recovery} and \textit{almost full recovery}. We establish the complete characterization of the sharp phase transition for both almost full and exact recovery. We use the terminology
following \cite{butucea2018} that we recall here.
\begin{definition}\label{def:possible} Let $(\Omega_{\Delta_{n}})_{n\geq 2}$ be a sequence of
classes corresponding to $(\Delta_{n})_{n \geq 2}$:
\begin{itemize}
\item We say that \emph{almost full recovery is possible} for
$(\Omega_{\Delta_{n}})_{n\geq 2}$ if there exists an estimator $\hat\eta$
such that
%
\begin{gather}
\label{af1}  \lim_{n\to\infty} \sup_{(\bt,\eta)\in \Omega_{\Delta_{n}}} \frac{1}{n}\mathbf{E}_{(\bt,\eta)}r(\hat{\eta},\eta)  =0.
\end{gather}
In this case, we say that $\hat\eta$ achieves almost full recovery.
\item We say that \emph{almost full recovery is impossible} for
$(\Omega_{\Delta_{n}})_{n\geq 2}$ if
%
\begin{gather}
\label{af2} \liminf_{n \to\infty} \inf_{\tilde{\eta}} \sup_{(\bt,\eta)\in \Omega_{\Delta_{n}}} \frac{1}{n}\mathbf{E}_{(\bt,\eta)}r(\tilde{\eta},\eta) >0,
\end{gather}
where $ \inf_{\tilde{\eta}}$ denotes the infimum over all estimators in $\{-1,1\}^{n}$.
\end{itemize}
\end{definition}

\begin{definition}\label{def:possibleexact} Let $(\Omega_{\Delta_{n}})_{n\geq 2}$ be a sequence of
classes corresponding to $(\Delta_{n})_{n \geq 2}$:
\begin{itemize}
\item We say that
\emph{exact recovery is possible} for $(\Omega_{\Delta_{n}})_{n\geq 2}$ if there exists an estimator $\hat\eta$ such that
%
\begin{gather}
\label{er1} \lim_{n\to\infty} \sup_{(\bt,\eta)\in \Omega_{\Delta_{n}}} \mathbf{E}_{(\bt,\eta)}r(\hat{\eta},\eta) =0.
\end{gather}
In this case, we say that $\hat\eta$ achieves exact recovery.

\item We say that
\emph{exact recovery is impossible} for $(\Omega_{\Delta_{n}})_{n\geq 2}$ if
%
\begin{gather}
\label{er2} \liminf_{n \to\infty} \inf_{\tilde{\eta}} \sup_{(\bt,\eta)\in \Omega_{\Delta_{n}}} \mathbf{E}_{(\bt,\eta)}r(\tilde{\eta},\eta)  >0,
\end{gather}
where $ \inf_{\tilde{\eta}}$ denotes the infimum over all estimators in $\{-1,1\}^{n}$.
\end{itemize}
\end{definition}
\noindent The following general characterization theorem is a straightforward
corollary of the results of previous sections.

\begin{theorem}\label{thm:asymp1}
\textup{(i)}
Almost full recovery is possible for $(\Omega_{\Delta_{n}})_{n\geq 2}$ if
and only if %
\begin{equation}
\label{eq1:t3a} \Phi^{{\sf c}}(\mathbf{r}_{n})\to0 \qquad\text{as } n
\to\infty.
\end{equation}
In this case, the estimator $\eta^{k}$ defined in \eqref{def:eta:zero}-\eqref{def:iter}, with $k=\lfloor3\log{n}\rfloor$, achieves almost full recovery.

\textup{(ii)}
Exact recovery is impossible for $(\Omega_{\Delta_{n}})_{n\geq 2}$ if for some $\epsilon>0$
%
\begin{equation}
\label{eq2:t3a} 
 \liminf_{n \to\infty}n\Phi^{{\sf c}}(\mathbf{r}_{n}(1+\epsilon))>0
\qquad\text{as } n\to\infty,
\end{equation}
and possible if for some $\epsilon>0$
\begin{equation}
\label{eq3:t3a} n\Phi^{{\sf c}}(\mathbf{r}_{n}(1-\epsilon))\to0
\qquad\text{as } n\to\infty,
\end{equation}
In this case, the estimator $\eta^{k}$ defined in \eqref{def:eta:zero}-\eqref{def:iter}, with $k=\lfloor3\log{n}\rfloor$, achieves exact recovery.
\end{theorem}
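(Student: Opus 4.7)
The plan is to obtain Theorem~\ref{thm:asymp1} as a direct consequence of the non-asymptotic upper bound in Theorem~\ref{thm:spectral_lloyds} and the non-asymptotic lower bound in Theorem~\ref{thm:lower_asymptotic}, using the strict monotonicity of $\Phi^{\sf c}$ to absorb the vanishing corrections $\epsilon_n$ and $(\log \mathbf{r}_n)/\mathbf{r}_n$ appearing in those statements into the fixed constants of the hypotheses.

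For part (i), sufficiency, I would first observe that $\Phi^{\sf c}(\mathbf{r}_n) \to 0$ forces $\mathbf{r}_n \to \infty$, since $\Phi^{\sf c}$ is continuous and strictly decreasing with $\Phi^{\sf c}(t) \to 0$ only as $t \to \infty$. In particular, the condition $\mathbf{r}_n \geq C$ of Theorem~\ref{thm:spectral_lloyds} holds eventually, and applying that theorem to $\eta^k$ at $k = \lfloor 3\log n \rfloor$ yields $(1/n)\sup_{(\bt,\eta)}\mathbf{E}_{(\bt,\eta)} r(\eta^k, \eta) \leq (C'/n)\Phi^{\sf c}(\mathbf{r}_n(1 - \epsilon_n - C'\log\mathbf{r}_n/\mathbf{r}_n)) = O(1/n)$, which tends to zero, so $\eta^k$ achieves almost full recovery. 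For the converse, suppose $\Phi^{\sf c}(\mathbf{r}_n) \not\to 0$; then along some subsequence $\mathbf{r}_n \leq M$, so by Theorem~\ref{thm:lower_asymptotic} $\Psi_{\Delta_n} \geq c\Phi^{\sf c}(\mathbf{r}_n(1+\epsilon_n)) \geq c\Phi^{\sf c}(M(1+o(1)))$, and the right-hand side is bounded below by a positive constant eventually, yielding $\liminf \Psi_{\Delta_n} > 0$, i.e., almost full recovery is impossible.

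For part (ii), I would treat both directions by the same scheme. For impossibility, multiplying the lower bound of Theorem~\ref{thm:lower_asymptotic} by $n$ gives $\inf_{\tilde\eta}\sup_{(\bt,\eta)}\mathbf{E}_{(\bt,\eta)} r(\tilde\eta,\eta) = n\Psi_{\Delta_n} \geq cn\Phi^{\sf c}(\mathbf{r}_n(1+\epsilon_n))$; for any fixed $\epsilon > 0$, eventually $\epsilon_n \leq \epsilon$, so monotonicity of $\Phi^{\sf c}$ delivers $\inf \sup \mathbf{E} r \geq cn\Phi^{\sf c}(\mathbf{r}_n(1+\epsilon))$, whose $\liminf$ is positive by hypothesis. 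For possibility, suppose $n\Phi^{\sf c}(\mathbf{r}_n(1-\epsilon)) \to 0$ for some $\epsilon > 0$; this again forces $\mathbf{r}_n \to \infty$ (otherwise $n\Phi^{\sf c}$ would diverge on a subsequence), so for $n$ large the correction $\epsilon_n + C'\log\mathbf{r}_n/\mathbf{r}_n$ is below $\epsilon$, and Theorem~\ref{thm:spectral_lloyds} together with monotonicity of $\Phi^{\sf c}$ gives $\sup \mathbf{E}_{(\bt,\eta)} r(\eta^k,\eta) \leq C'\Phi^{\sf c}(\mathbf{r}_n(1-\epsilon))$, which tends to zero since $n\Phi^{\sf c}(\mathbf{r}_n(1-\epsilon)) \to 0$ implies $\Phi^{\sf c}(\mathbf{r}_n(1-\epsilon)) \to 0$ a fortiori.

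No step presents a genuine obstacle: the substantive work was carried out in the previous non-asymptotic theorems. The only accounting task is the absorption of the $o(1)$ corrections into the fixed $\epsilon$ of the hypotheses, which rests solely on the dichotomy that either $\mathbf{r}_n \to \infty$ (derivable from the hypothesis in each of the positive directions) or $\mathbf{r}_n$ is bounded along a subsequence (for the converse of part (i)). For this reason the theorem qualifies as a direct corollary of the earlier results.
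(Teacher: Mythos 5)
Your proposal is correct and takes essentially the same route as the paper: Theorem \ref{thm:asymp1} is deduced directly from Theorem \ref{thm:lower_asymptotic} and Theorem \ref{thm:spectral_lloyds}, absorbing the $o(1)$ corrections via monotonicity of $\Phi^{{\sf c}}$ and the observation that the hypotheses in the positive directions force $\mathbf{r}_{n}\to\infty$. One cosmetic point: in the converse of part (i), the bounded-subsequence argument shows the risk is bounded away from zero only along that subsequence, which is exactly what is needed to rule out almost full recovery, but it does not literally yield $\liminf_{n}\Psi_{\Delta_{n}}>0$ over the whole sequence as written.
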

Although Theorem~\ref{thm:asymp1} gives a complete solution to both problems of almost full and exact recovery,
conditions \eqref{eq1:t3a}, \eqref{eq2:t3a} and \eqref{eq3:t3a} are not quite
explicit.
The next Theorem is a consequence of Theorem~\ref{thm:asymp1}. It
describes a ``phase transition'' for $\Delta_{n}$ in the problem of almost full
recovery.

\begin{theorem}\label{thm:asymp}
\begin{itemize}
    \item[(i)] If $\sigma^{2}\left(1+\sqrt{p/n}\right)=o(\Delta_{n}^{2})$,
        then the estimator $\eta^{k}$ defined in \eqref{def:eta:zero}-\eqref{def:iter}, with $k=\lfloor3\log{n}\rfloor$, achieves almost full recovery.
    \item[(ii)] Moreover if $\Delta_{n}^{2}=\mathcal{O}\left( \sigma^{2}(1+\sqrt{p/n})\right)$,
        then almost full recovery is impossible.
\end{itemize}
\end{theorem}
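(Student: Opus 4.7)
The plan is to derive the statement directly from the ``iff'' characterization of almost full recovery in Theorem~\ref{thm:asymp1}(i): almost full recovery is possible precisely when $\Phi^{{\sf c}}(\mathbf{r}_n)\to 0$, equivalently when $\mathbf{r}_n\to\infty$. Hence the entire task reduces to comparing $\mathbf{r}_n$ with the explicit quantity $\Delta_n^2/\bigl(\sigma^2(1+\sqrt{p/n})\bigr)$.

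Writing $a_n := \Delta_n^2/\sigma^2$ and $b_n := p/n$, we have
\[
\mathbf{r}_n^2 \;=\; \frac{a_n^2}{a_n + b_n}.
\]
The workhorses are the two trivial upper bounds $\mathbf{r}_n^2 \le a_n$ and $\mathbf{r}_n^2 \le a_n^2/b_n$ (whenever $b_n>0$), together with the matching lower bounds $\mathbf{r}_n^2 \ge a_n/2$ when $a_n \ge b_n$, and $\mathbf{r}_n^2 \ge a_n^2/(2b_n)$ when $b_n > a_n$.

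For part~(i), the hypothesis $\sigma^2(1+\sqrt{p/n})=o(\Delta_n^2)$ amounts to $a_n \gg 1+\sqrt{b_n}$. I would split into two subcases. If $a_n \ge b_n$, then $\mathbf{r}_n^2 \ge a_n/2 \to \infty$, because $a_n\gg 1$. If $b_n > a_n$, then $\mathbf{r}_n^2 \ge a_n^2/(2b_n)$, and the condition $a_n \gg \sqrt{b_n}$ forces $a_n^2/b_n \to \infty$. In either case $\mathbf{r}_n\to\infty$, so Theorem~\ref{thm:asymp1}(i) applies and the estimator $\eta^{\lfloor 3\log n\rfloor}$ achieves almost full recovery.

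For part~(ii), the hypothesis yields $a_n \le C(1+\sqrt{b_n})$ for some constant $C>0$. When $b_n \le 1$ this gives $a_n \le 2C$, and hence $\mathbf{r}_n^2 \le a_n \le 2C$. When $b_n > 1$ it gives $a_n \le 2C\sqrt{b_n}$, and hence $\mathbf{r}_n^2 \le a_n^2/b_n \le 4C^2$. In both cases $\mathbf{r}_n$ stays bounded, so $\Phi^{{\sf c}}(\mathbf{r}_n)$ is bounded away from zero, and the ``only-if'' direction of Theorem~\ref{thm:asymp1}(i) rules out almost full recovery. The main obstacle is merely the case split between $p\lesssim n$ and $p\gg n$; no non-elementary estimate is needed, since every bound on $\mathbf{r}_n$ follows from the elementary inequalities on $a_n^2/(a_n+b_n)$ stated above.
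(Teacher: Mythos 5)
Your proposal is correct and follows essentially the same route as the paper: both reduce the statement to the characterization in Theorem~\ref{thm:asymp1}(i), i.e., almost full recovery is possible iff $\Phi^{{\sf c}}(\mathbf{r}_n)\to 0$, equivalently $\mathbf{r}_n\to\infty$, and then translate that into an explicit condition on $\Delta_n^2$. The paper does the translation by \emph{exactly} inverting the quadratic $x\mapsto x^2/(x+p/n)$, writing $\mathbf{r}_n^2\ge A$ iff $\Delta_n^2\ge A\frac{1+\sqrt{1+4p/(nA)}}{2}\sigma^2$, whereas you use the equivalent two-sided elementary bounds on $a_n^2/(a_n+b_n)$ with a case split on whether $a_n\gtrless b_n$; the inequalities you invoke ($\mathbf{r}_n^2\le a_n$, $\mathbf{r}_n^2\le a_n^2/b_n$, and the matching lower bounds with a factor $1/2$) are all correct, and both the possibility and impossibility directions go through. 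Since the paper handles Theorems~\ref{thm:asymp} and~\ref{thm:asympE} in one stroke via the exact inversion (the precision is needed for the sharp exact-recovery threshold $\bar\Delta_n$), but for almost full recovery alone your cruder bounds suffice, the two arguments are interchangeable here.
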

\noindent Theorem~\ref{thm:asymp} shows that almost
full recovery occurs if and only if
%
\begin{equation}
\label{phase}  \sigma^{2}\left(1+\sqrt{p/n}\right)=o(\Delta_{n}^{2}).
\end{equation}
As for exact recovery, the present work characterizes a precise threshold
$\bar{\Delta}_n$ such that exact recovery is possible
for $\Delta_n$ greater than $\bar{\Delta}_n$ and is impossible for $\Delta_n$ smaller than
$\bar{\Delta}_n$. 
Define $\bar{\Delta}_{n}>0$ such that
\begin{equation}\label{def:threshold}
    \bar{\Delta}^{2}_{n} = \sigma^{2}  \left( 1 + \sqrt{1 + \frac{2p}{n\log{n}}}\right)\log{n}. 
\end{equation}
\begin{theorem}\label{thm:asympE}
\begin{itemize}
\item[(i)]
Let $\Delta_{n} \geq \bar{\Delta}_{n}(1+\epsilon)$
for some $\epsilon>0$.
 Then, the estimator $\eta^{k}$ defined in \eqref{def:eta:zero}-\eqref{def:iter}, with $k=\lfloor3\log{n}\rfloor$, achieves exact recovery.
\item[(ii)] If the complementary condition holds, i.e, $\Delta_{n} \leq \bar{\Delta}_{n}(1-\epsilon)$
for some $\epsilon>0$, then exact recovery is impossible.
\end{itemize}
\end{theorem}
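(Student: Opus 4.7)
The plan is to reduce Theorem \ref{thm:asympE} to the characterization in Theorem \ref{thm:asymp1}(ii) by showing that the threshold $\bar{\Delta}_n$ is precisely the value at which $\mathbf{r}_n^2 = 2\log n$, the borderline where the Gaussian tail $\Phi^{\sf c}(\mathbf{r}_n)$ is of order $1/n$. The heart of the argument is a clean algebraic identity. Setting $x = \bar{\Delta}_n^2/\sigma^2 = \bigl(1+\sqrt{1+2p/(n\log n)}\bigr)\log n$, one rearranges to $(x-\log n)^2 = \log^2 n + 2p\log n/n$, which expands to $x^2 = 2\log n \cdot (x + p/n)$. Since $\mathbf{r}_n^2 = (\Delta^2/\sigma^2)^2/(\Delta^2/\sigma^2 + p/n)$, substituting $\Delta_n = \bar\Delta_n$ gives exactly $\mathbf{r}_n^2 = 2\log n$.

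For part (i), set $y = \Delta_n^2/\sigma^2 \geq (1+\epsilon)^2 x$ and note that $f(y) := y^2/(y+p/n)$ is strictly increasing. Using $(1+\epsilon)^2 x + p/n \leq (1+\epsilon)^2(x+p/n)$ (since $(1+\epsilon)^2 \geq 1$), I obtain
\[
\mathbf{r}_n^2 \;\geq\; \frac{(1+\epsilon)^4 x^2}{(1+\epsilon)^2(x+p/n)} \;=\; (1+\epsilon)^2\,\frac{x^2}{x+p/n} \;=\; 2(1+\epsilon)^2\log n.
\]
Choosing $\delta>0$ small enough that $(1+\epsilon)(1-\delta) > 1$ and invoking the Mills ratio upper bound $\Phi^{\sf c}(t) \leq e^{-t^2/2}/(t\sqrt{2\pi})$,
\[
n\,\Phi^{\sf c}\!\bigl(\mathbf{r}_n(1-\delta)\bigr) \;\leq\; \frac{n^{\,1-(1+\epsilon)^2(1-\delta)^2}}{c\sqrt{\log n}} \;\longrightarrow\; 0,
\]
so Theorem \ref{thm:asymp1}(ii) yields exact recovery for $\eta^k$.

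For part (ii), with $y \leq (1-\epsilon)^2 x$ the inequality $(1-\epsilon)^2 x + p/n \geq (1-\epsilon)^2(x+p/n)$ (which is just $p/n \geq (1-\epsilon)^2 p/n$) gives $\mathbf{r}_n^2 \leq 2(1-\epsilon)^2\log n$. Picking $\delta>0$ with $(1-\epsilon)(1+\delta) < 1$ and applying the standard lower bound $\Phi^{\sf c}(t) \geq \frac{t}{\sqrt{2\pi}(t^2+1)}e^{-t^2/2}$ yields
\[
n\,\Phi^{\sf c}\!\bigl(\mathbf{r}_n(1+\delta)\bigr) \;\geq\; \frac{c\,n^{\,1-(1-\epsilon)^2(1+\delta)^2}}{\sqrt{\log n}} \;\longrightarrow\; \infty,
\]
so $\liminf_n n\Phi^{\sf c}(\mathbf{r}_n(1+\delta)) > 0$ and Theorem \ref{thm:asymp1}(ii) rules out exact recovery.

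The entire argument is structural once the pivotal identity $\mathbf{r}_n^2(\bar{\Delta}_n) = 2\log n$ is in hand, and that identity is essentially forced by the specific form of $\bar\Delta_n$. The only point requiring a bit of care is ensuring the bounds on $(1\pm\epsilon)^2 x + p/n$ versus $(1\pm\epsilon)^2(x+p/n)$ go in the right direction uniformly in $p/n$; this is what makes the phase transition genuinely sharp across both regimes $p \ll n\log n$ and $p \gg n\log n$, rather than only in the easy asymptotic limits.
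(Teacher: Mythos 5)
Your proposal is correct and follows essentially the same route as the paper: the paper also inverts the monotone map $x \mapsto x/\sqrt{x+p/n}$ (your identity $\mathbf{r}_n^2(\bar\Delta_n)=2\log n$ is exactly this inversion at $A=2\log n$) and then concludes via Theorem \ref{thm:asymp1} and Gaussian tail bounds. You merely spell out the $(1\pm\epsilon)$ monotonicity and Mills-ratio details that the paper leaves implicit.
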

Some remarks are in order here. First of all, Theorem~\ref{thm:asympE}
shows that
the ``phase transition'' for exact recovery occurs exactly at $\bar{\Delta}_{n}$ given by \eqref{def:threshold}.
It is remarkable that this sharp threshold for exact recovery is valid
for all values of $p$. In particular, it gives rise to a  critical dimension $p^{*} = n\log{n}$:
\begin{itemize}
    \item 
If $p=o(p^{*}) $, then $
\bar{\Delta}_{n}=(1+o(1))\sigma\sqrt{2\log{n}}$. In this case, the phase transition threshold for exact recovery is the same as if $\bt$ were known. Indeed, according to lower bound \eqref{eq:lower_trivial}, it is straightforward that condition $
\Delta \geq \sigma\sqrt{2\log{n}}$ is necessary for exact recovery.
\item
 On the other hand if $p^{*}=o(p)$, then $\bar{\Delta}_{n}=(1+o(1))\sigma\left(\frac{2p\log{n}}{n}\right)^{1/4}$. This new condition takes into account the hardness of estimation, and $p^{*}$ can be interpreted as a phase transition with respect to the hardness of estimation of $\bt$.
\end{itemize}

\section{Numerical simulation results}\label{sec:simu}

The goal of this section is twofold. First, we verify empirically
    the sharp transition threshold $\bar\Delta_n$.
Second, we compare the performance of estimator \eqref{def:eta:zero}-\eqref{def:iter} that we call \textit{spectral Lloyd's} against the spectral method \eqref{def:eta:zero} that we call \textit{spectral}. In what follows, we fix $n=500$ the number of labels. For the sake of readability of plots, we define the parameters $a$ and $b $ such that

\[
\Delta^2 = (1+\sqrt{a})\log{n} \quad \text{and} \quad p = bn\log{n}.
\]
According to \eqref{def:threshold}, and using the above parameterization, the sharp phase transition for exact recovery happens at
\[
a = 1 + 2b.
\]
\noindent Our simulation setup was defined as follows. We set $a$ on a uniform grid of $50$ points delimited by $1.1$ and $11$. Similarly, we set $b$ on a uniform grid of $50$ points delimited by $0.1$ and $5$. For each combination of values of $ a$ and $b$, simulation was repeated $300$ times and we return the indicator of success i.e. when the estimator recovers exactly the true labels vector $\eta$. For a better interpretation, we apply the function $x \to 10^{-3(1-x)}$ to the average probability of exact recovery in the plots (Figure \ref{fig:comparison}) for spectral Lloyd's (left) and spectral (right). 

\begin{figure}[ht]
\centering
\hspace{-.cm}
\begin{subfigure}{}
  \centering
  \includegraphics[width=.45\linewidth]{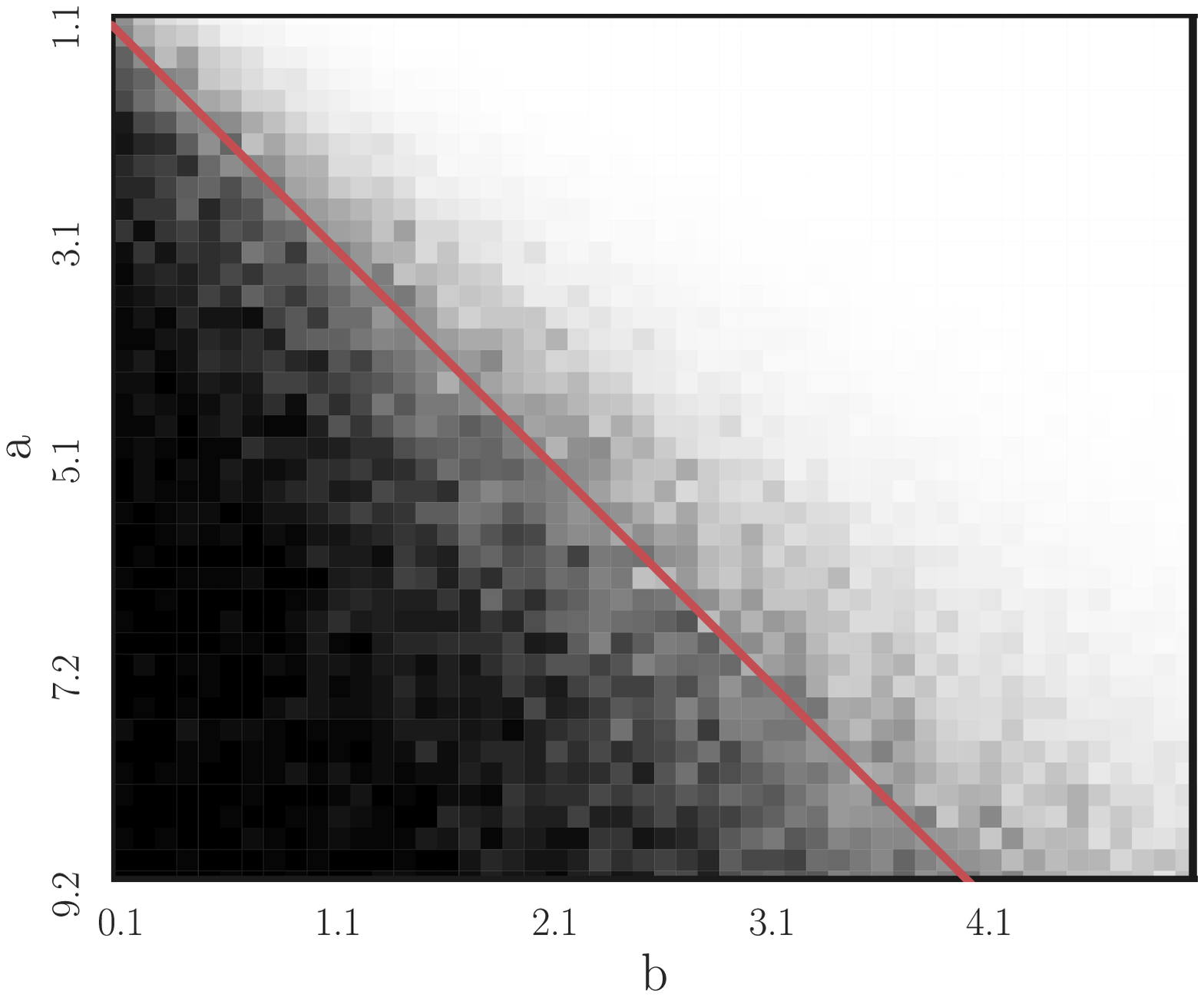}
  \label{fig:sub1}
\end{subfigure}
\centering
\begin{subfigure}
  \centering
  \includegraphics[width=.515\linewidth]{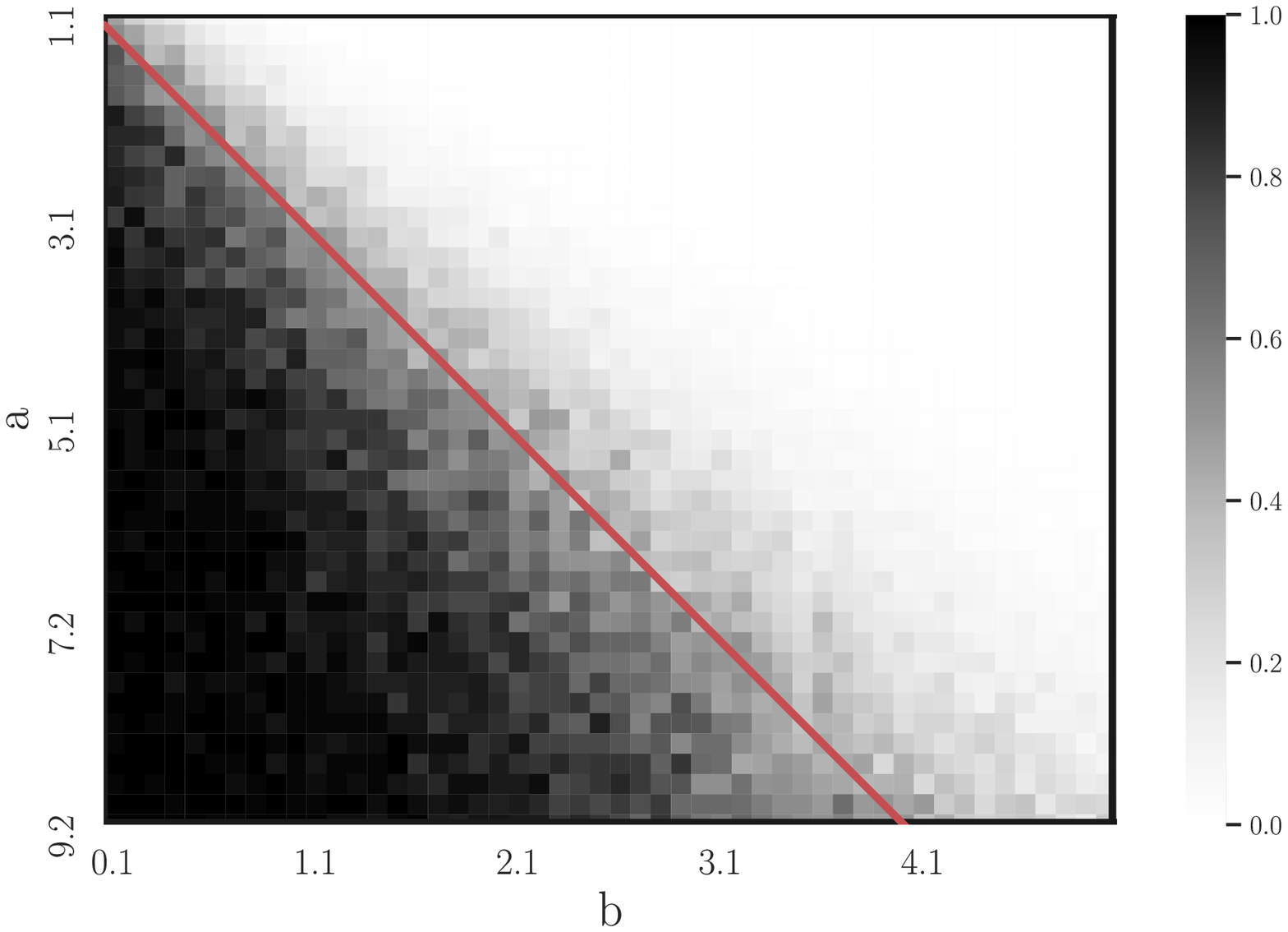}
  \label{fig:sub2}
\end{subfigure}

\caption{Empirical probability of success over 300 runs of the experiment for: spectral Lloyd's (left) and spectral (right). The red curve corresponds to the threshold equation $a = 1 + 2b$.}
\label{fig:comparison}
\end{figure}
As predicted by our theory, spectral Lloyd's achieves exact recovery with high probability at the sharp threshold given by \eqref{def:threshold}. Interestingly, the spectral method itself achieves exact recovery with high probability at the optimal threshold as claimed by \cite{leoffler} and \cite{abbe2020ell_p}.

In order to compare the two methods, we plot the difference of their respective average probability of exact recovery in the plots (Figure \ref{fig:comparison2}) in: the regime where exact recovery is impossible (left) and the regime where exact recovery is possible (right).
\begin{figure}[ht]
\centering
\hspace{-.cm}
\begin{subfigure}{}
  \centering
  \includegraphics[width=.465\linewidth]{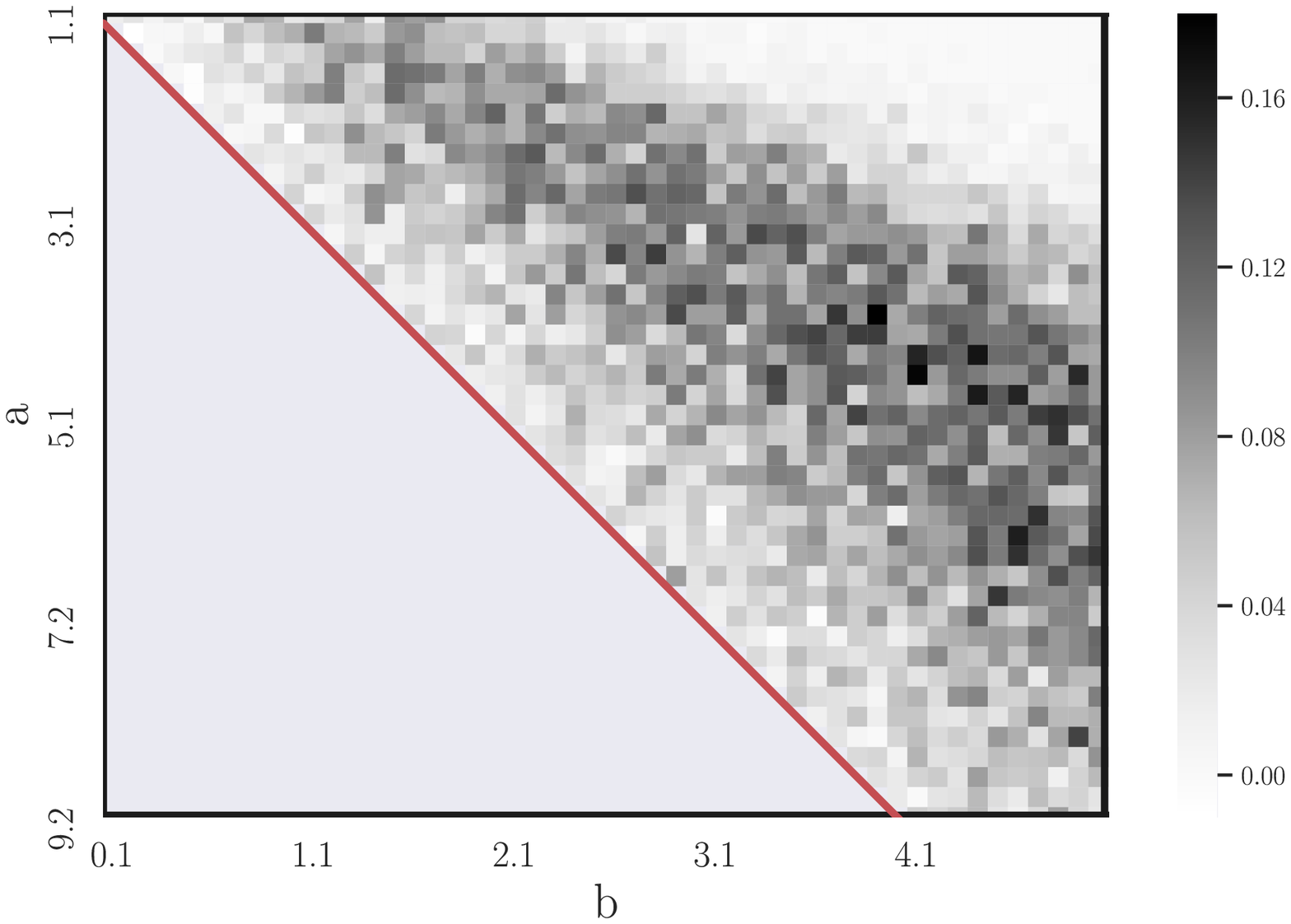}
  \label{fig:sub3}
\end{subfigure}
\centering
\begin{subfigure}{}
  \centering
  \includegraphics[width=.475\linewidth]{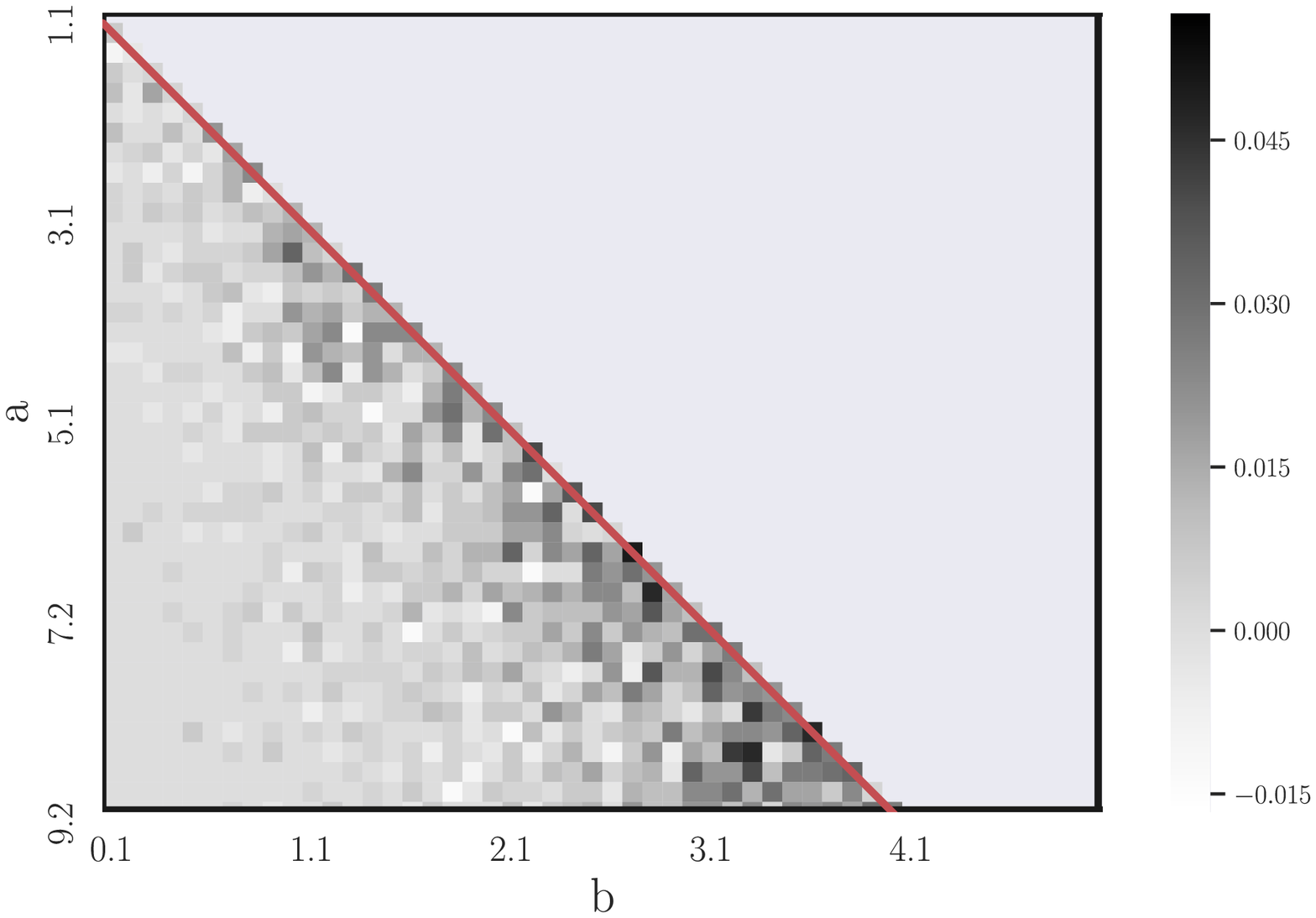}
  \label{fig:sub4}
\end{subfigure}

\caption{Difference between empirical probability of success of spectral Lloyd's and spectral over 300 runs of the experiment in: the regime of impossible exact recovery (left) and the regime where exact recovery is possible (right). The red curve corresponds to the threshold equation $a = 1 + 2b$.}
\label{fig:comparison2}
\end{figure}

In both regimes, spectral Lloyd's outperforms spectral especially as we get closer to the threshold equation $a=1+2b$. In the regime of impossible exact recovery, spectral Lloyd's captures more information about the labels. 

Overall, numerical experiments confirm our theoretical findings. Moreover, empirical comparison of spectral Lloyd's with spectral suggests that it is still useful to use Lloyd's iteration to improve upon the spectral initialization.

\section{Discussion and open problems}
A key objective of this paper was to characterize the sharp phase transition threshold for exact recovery in the two component Gaussian mixture model. All upper bounds remain valid in the case of sub-Gaussian noise. It would be interesting to generalize the methodology used to derive both lower and upper bounds to the case of multiple communities and general covariance structure of the noise. We also expect the procedure \eqref{def:eta:zero}-\eqref{def:iter} to achieve exact recovery in asymptotically sharp way in other problems, for instance in the Bipartite Stochastic Block Model as in \cite{ndaoud2019improved}. 

We conclude this paper with some open questions. Let $p^{*}=n\log{n}$. In the regime $p^{*}=o(p)$, we proved that for any $\epsilon>0$, the condition
$$\Delta^{2} \geq (1-\epsilon)\sigma^{2}\left(\frac{2p}{p^{*}}\right)^{1/2}\log{n}
$$
is necessary to achieve exact recovery. This is a consequence of considering a Gaussian prior on $\bt$ which makes recovering its direction the hardest. We give here a heuristics that this should hold independently on the choice of prior as long as $\bt$ is uniformly well-spread (i.e., not sparse). Suppose that we put a Rademacher prior on $\bt$ such that $\bt = \frac{\Delta}{\sqrt{p}}\zeta$, where $\zeta$ is a random vector with i.i.d. Rademacher entries. Following the same argument as in Proposition \ref{prop:lower_trivial}, it is clear that a necessary condition to get non-trivial correlation with $\zeta$ is given by $
\Delta^{2} \geq c\sigma^{2}\frac{p}{n}
$
for some $c>0$. Observing that, in the hard estimation regime, we have
$$
\left(\frac{p}{p^{*}}\right)^{1/2}\log{n} = o\left(\frac{p}{n}\right).
$$
It comes out that, while exact recovery of $\eta$ is possible, non-trivial correlation with $\zeta$ is impossible. Consequently, there is no hope achieving exact recovery through non-trivial correlation with $\bt$ in the hard estimation regime. It would be interesting to prove or disprove that for any $\epsilon>0$, 
$$ \Delta^{2} \geq (1-\epsilon) \sigma^{2}\sqrt{\frac{2p\log{n}}{n}} $$
is necessary to achieve exact recovery. In particular, a positive answer to the previous question will be very useful to derive optimal conditions for exact recovery in bipartite graph models, among other problems. 

Another interesting problem, is motivated by the following observation. After running simulations in Section \ref{sec:simu}, we decided to compare Lloyd's iterations initialized with a spectral method against randomly initialized Lloyd's iterations (\textit{random Lloyd's}). The plot in Figure \ref{fig:open} corresponds to the difference of the average probability of exact recovery between spectral Lloyd's and random Lloyd's.
\begin{figure}[ht]
\centering

  \includegraphics[width=.5\linewidth]{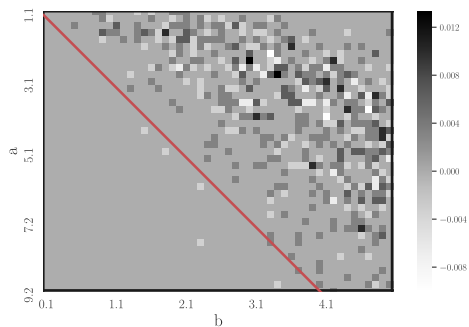}

\caption{Difference between empirical probability of success of spectral Lloyd's and random Lloyd's over 300 runs of the experiment. The red curve corresponds to the threshold equation $a=1+2b$.}
\label{fig:open}
\end{figure}
In particular, Figure \ref{fig:open} suggests that a good initialization is not required for Lloyd's iterations to achieve exact recovery.  It would be interesting to provide theoretical guarantees that the Lloyd's algorithm randomly initialized is already optimal, in the same spirit as in \cite{wu2019randomly}.

 \section*{Acknowledgements}

 I would like to thank Christophe Giraud for stimulating discussions on clustering in Gaussian mixtures and Alexandre Tsybakov  for valuable comments on early versions of this manuscript.  I also would like to thank Pierre Bellec for helpful discussions that improved the presentation of this paper. The first version of this work (Ndaoud 2018, arXiv:1812.08078v1) was supported by Investissements d'Avenir (ANR-11-IDEX-0003/Labex Ecodec/ANR-11-LABX-0047).

\bibliographystyle{imsart-nameyear}

\begin{thebibliography}{29}

\bibitem[\protect\citeauthoryear{Abbe}{2017}]{abbe}
\begin{barticle}[author]
\bauthor{\bsnm{Abbe},~\bfnm{Emmanuel}\binits{E.}}
(\byear{2017}).
\btitle{Community detection and stochastic block models: recent developments}.
\bjournal{arXiv preprint arXiv:1703.10146}.
\end{barticle}
\endbibitem

\bibitem[\protect\citeauthoryear{Abbe, Bandeira and Hall}{2014}]{bandeira}
\begin{barticle}[author]
\bauthor{\bsnm{Abbe},~\bfnm{Emmanuel}\binits{E.}},
  \bauthor{\bsnm{Bandeira},~\bfnm{Afonso~S}\binits{A.~S.}} \AND
  \bauthor{\bsnm{Hall},~\bfnm{Georgina}\binits{G.}}
(\byear{2014}).
\btitle{Exact recovery in the stochastic block model}.
\bjournal{arXiv preprint arXiv:1405.3267}.
\end{barticle}
\endbibitem

\bibitem[\protect\citeauthoryear{Abbe, Fan and Wang}{2020}]{abbe2020ell_p}
\begin{barticle}[author]
\bauthor{\bsnm{Abbe},~\bfnm{Emmanuel}\binits{E.}},
  \bauthor{\bsnm{Fan},~\bfnm{Jianqing}\binits{J.}} \AND
  \bauthor{\bsnm{Wang},~\bfnm{Kaizheng}\binits{K.}}
(\byear{2020}).
\btitle{An $\ell_p $ theory of PCA and spectral clustering}.
\bjournal{arXiv preprint arXiv:2006.14062}.
\end{barticle}
\endbibitem

\bibitem[\protect\citeauthoryear{Abbe et~al.}{2017}]{abbeentriwise}
\begin{barticle}[author]
\bauthor{\bsnm{Abbe},~\bfnm{Emmanuel}\binits{E.}},
  \bauthor{\bsnm{Fan},~\bfnm{Jianqing}\binits{J.}},
  \bauthor{\bsnm{Wang},~\bfnm{Kaizheng}\binits{K.}} \AND
  \bauthor{\bsnm{Zhong},~\bfnm{Yiqiao}\binits{Y.}}
(\byear{2017}).
\btitle{Entrywise Eigenvector Analysis of Random Matrices with Low Expected
  Rank}.
\bjournal{arXiv preprint arXiv:1709.09565}.
\end{barticle}
\endbibitem

\bibitem[\protect\citeauthoryear{Banks et~al.}{2018}]{banks}
\begin{barticle}[author]
\bauthor{\bsnm{Banks},~\bfnm{Jess}\binits{J.}},
  \bauthor{\bsnm{Moore},~\bfnm{Cristopher}\binits{C.}},
  \bauthor{\bsnm{Vershynin},~\bfnm{Roman}\binits{R.}},
  \bauthor{\bsnm{Verzelen},~\bfnm{Nicolas}\binits{N.}} \AND
  \bauthor{\bsnm{Xu},~\bfnm{Jiaming}\binits{J.}}
(\byear{2018}).
\btitle{Information-theoretic bounds and phase transitions in clustering,
  sparse PCA, and submatrix localization}.
\bjournal{IEEE Transactions on Information Theory}.
\end{barticle}
\endbibitem

\bibitem[\protect\citeauthoryear{Benaych-Georges and
  Nadakuditi}{2012}]{benaych}
\begin{barticle}[author]
\bauthor{\bsnm{Benaych-Georges},~\bfnm{Florent}\binits{F.}} \AND
  \bauthor{\bsnm{Nadakuditi},~\bfnm{Raj~Rao}\binits{R.~R.}}
(\byear{2012}).
\btitle{The singular values and vectors of low rank perturbations of large
  rectangular random matrices}.
\bjournal{Journal of Multivariate Analysis}
\bvolume{111}
\bpages{120--135}.
\end{barticle}
\endbibitem

\bibitem[\protect\citeauthoryear{Butucea et~al.}{2018}]{butucea2018}
\begin{barticle}[author]
\bauthor{\bsnm{Butucea},~\bfnm{Cristina}\binits{C.}},
  \bauthor{\bsnm{Ndaoud},~\bfnm{Mohamed}\binits{M.}},
  \bauthor{\bsnm{Stepanova},~\bfnm{Natalia~A}\binits{N.~A.}} \AND
  \bauthor{\bsnm{Tsybakov},~\bfnm{Alexandre~B}\binits{A.~B.}}
(\byear{2018}).
\btitle{Variable selection with Hamming loss}.
\bjournal{The Annals of Statistics}
\bvolume{46}
\bpages{1837--1875}.
\end{barticle}
\endbibitem

\bibitem[\protect\citeauthoryear{Chen and Yang}{2020}]{chen2020cutoff}
\begin{barticle}[author]
\bauthor{\bsnm{Chen},~\bfnm{Xiaohui}\binits{X.}} \AND
  \bauthor{\bsnm{Yang},~\bfnm{Yun}\binits{Y.}}
(\byear{2020}).
\btitle{Cutoff for exact recovery of Gaussian mixture models}.
\bjournal{arXiv preprint arXiv:2001.01194}.
\end{barticle}
\endbibitem

\bibitem[\protect\citeauthoryear{Fei and Chen}{2018}]{fei2018hidden}
\begin{barticle}[author]
\bauthor{\bsnm{Fei},~\bfnm{Yingjie}\binits{Y.}} \AND
  \bauthor{\bsnm{Chen},~\bfnm{Yudong}\binits{Y.}}
(\byear{2018}).
\btitle{Hidden integrality of sdp relaxation for sub-gaussian mixture models}.
\bjournal{arXiv preprint arXiv:1803.06510}.
\end{barticle}
\endbibitem

\bibitem[\protect\citeauthoryear{Feldman, Perkins and Vempala}{2015}]{feldman}
\begin{binproceedings}[author]
\bauthor{\bsnm{Feldman},~\bfnm{Vitaly}\binits{V.}},
  \bauthor{\bsnm{Perkins},~\bfnm{Will}\binits{W.}} \AND
  \bauthor{\bsnm{Vempala},~\bfnm{Santosh}\binits{S.}}
(\byear{2015}).
\btitle{Subsampled power iteration: a unified algorithm for block models and
  planted csp's}.
In \bbooktitle{Advances in Neural Information Processing Systems}
\bpages{2836--2844}.
\end{binproceedings}
\endbibitem

\bibitem[\protect\citeauthoryear{Florescu and Perkins}{2016}]{perkins}
\begin{binproceedings}[author]
\bauthor{\bsnm{Florescu},~\bfnm{Laura}\binits{L.}} \AND
  \bauthor{\bsnm{Perkins},~\bfnm{Will}\binits{W.}}
(\byear{2016}).
\btitle{Spectral thresholds in the bipartite stochastic block model}.
In \bbooktitle{Conference on Learning Theory}
\bpages{943--959}.
\end{binproceedings}
\endbibitem

\bibitem[\protect\citeauthoryear{Gao et~al.}{2018}]{gao2018}
\begin{barticle}[author]
\bauthor{\bsnm{Gao},~\bfnm{Chao}\binits{C.}},
  \bauthor{\bsnm{Ma},~\bfnm{Zongming}\binits{Z.}},
  \bauthor{\bsnm{Zhang},~\bfnm{Anderson~Y}\binits{A.~Y.}},
  \bauthor{\bsnm{Zhou},~\bfnm{Harrison~H}\binits{H.~H.}} \betal{et~al.}
(\byear{2018}).
\btitle{Community detection in degree-corrected block models}.
\bjournal{The Annals of Statistics}
\bvolume{46}
\bpages{2153--2185}.
\end{barticle}
\endbibitem

\bibitem[\protect\citeauthoryear{Giraud and Verzelen}{2018}]{giraud}
\begin{barticle}[author]
\bauthor{\bsnm{Giraud},~\bfnm{Christophe}\binits{C.}} \AND
  \bauthor{\bsnm{Verzelen},~\bfnm{Nicolas}\binits{N.}}
(\byear{2018}).
\btitle{Partial recovery bounds for clustering with the relaxed $ K $ means}.
\bjournal{arXiv preprint arXiv:1807.07547}.
\end{barticle}
\endbibitem

\bibitem[\protect\citeauthoryear{Holland, Laskey and Leinhardt}{1983}]{holland}
\begin{barticle}[author]
\bauthor{\bsnm{Holland},~\bfnm{Paul~W}\binits{P.~W.}},
  \bauthor{\bsnm{Laskey},~\bfnm{Kathryn~Blackmond}\binits{K.~B.}} \AND
  \bauthor{\bsnm{Leinhardt},~\bfnm{Samuel}\binits{S.}}
(\byear{1983}).
\btitle{Stochastic blockmodels: First steps}.
\bjournal{Social networks}
\bvolume{5}
\bpages{109--137}.
\end{barticle}
\endbibitem

\bibitem[\protect\citeauthoryear{Hsu et~al.}{2012}]{hsutail}
\begin{barticle}[author]
\bauthor{\bsnm{Hsu},~\bfnm{Daniel}\binits{D.}},
  \bauthor{\bsnm{Kakade},~\bfnm{Sham}\binits{S.}},
  \bauthor{\bsnm{Zhang},~\bfnm{Tong}\binits{T.}} \betal{et~al.}
(\byear{2012}).
\btitle{A tail inequality for quadratic forms of subgaussian random vectors}.
\bjournal{Electronic Communications in Probability}
\bvolume{17}.
\end{barticle}
\endbibitem

\bibitem[\protect\citeauthoryear{Klusowski and Brinda}{2016}]{klusowski}
\begin{barticle}[author]
\bauthor{\bsnm{Klusowski},~\bfnm{Jason~M}\binits{J.~M.}} \AND
  \bauthor{\bsnm{Brinda},~\bfnm{WD}\binits{W.}}
(\byear{2016}).
\btitle{Statistical guarantees for estimating the centers of a two-component
  gaussian mixture by EM}.
\bjournal{arXiv preprint arXiv:1608.02280}.
\end{barticle}
\endbibitem

\bibitem[\protect\citeauthoryear{Lehmann and Casella}{2006}]{LC}
\begin{bbook}[author]
\bauthor{\bsnm{Lehmann},~\bfnm{Erich~L}\binits{E.~L.}} \AND
  \bauthor{\bsnm{Casella},~\bfnm{George}\binits{G.}}
(\byear{2006}).
\btitle{Theory of point estimation}.
\bpublisher{Springer Science \& Business Media}.
\end{bbook}
\endbibitem

\bibitem[\protect\citeauthoryear{Lloyd}{1982}]{lloyds}
\begin{barticle}[author]
\bauthor{\bsnm{Lloyd},~\bfnm{Stuart}\binits{S.}}
(\byear{1982}).
\btitle{Least squares quantization in PCM}.
\bjournal{IEEE transactions on information theory}
\bvolume{28}
\bpages{129--137}.
\end{barticle}
\endbibitem

\bibitem[\protect\citeauthoryear{L{\"o}ffler, Zhang and Zhou}{2019}]{leoffler}
\begin{barticle}[author]
\bauthor{\bsnm{L{\"o}ffler},~\bfnm{Matthias}\binits{M.}},
  \bauthor{\bsnm{Zhang},~\bfnm{Anderson~Y}\binits{A.~Y.}} \AND
  \bauthor{\bsnm{Zhou},~\bfnm{Harrison~H}\binits{H.~H.}}
(\byear{2019}).
\btitle{Optimality of spectral clustering for gaussian mixture model}.
\bjournal{arXiv preprint arXiv:1911.00538}.
\end{barticle}
\endbibitem

\bibitem[\protect\citeauthoryear{Lu and Zhou}{2016}]{zhou}
\begin{barticle}[author]
\bauthor{\bsnm{Lu},~\bfnm{Yu}\binits{Y.}} \AND
  \bauthor{\bsnm{Zhou},~\bfnm{Harrison~H}\binits{H.~H.}}
(\byear{2016}).
\btitle{Statistical and Computational Guarantees of Lloyd's Algorithm and its
  Variants}.
\bjournal{arXiv preprint arXiv:1612.02099}.
\end{barticle}
\endbibitem

\bibitem[\protect\citeauthoryear{Mixon, Villar and Ward}{2016}]{villar}
\begin{barticle}[author]
\bauthor{\bsnm{Mixon},~\bfnm{Dustin~G}\binits{D.~G.}},
  \bauthor{\bsnm{Villar},~\bfnm{Soledad}\binits{S.}} \AND
  \bauthor{\bsnm{Ward},~\bfnm{Rachel}\binits{R.}}
(\byear{2016}).
\btitle{Clustering subgaussian mixtures by semidefinite programming}.
\bjournal{arXiv preprint arXiv:1602.06612}.
\end{barticle}
\endbibitem

\bibitem[\protect\citeauthoryear{Ndaoud}{2018}]{ndaoud}
\begin{barticle}[author]
\bauthor{\bsnm{Ndaoud},~\bfnm{Mohamed}\binits{M.}}
(\byear{2018}).
\btitle{Interplay of minimax estimation and minimax support recovery under
  sparsity}.
\bjournal{arXiv preprint arXiv:1810.05478}.
\end{barticle}
\endbibitem

\bibitem[\protect\citeauthoryear{Ndaoud, Sigalla and
  Tsybakov}{2019}]{ndaoud2019improved}
\begin{barticle}[author]
\bauthor{\bsnm{Ndaoud},~\bfnm{Mohamed}\binits{M.}},
  \bauthor{\bsnm{Sigalla},~\bfnm{Suzanne}\binits{S.}} \AND
  \bauthor{\bsnm{Tsybakov},~\bfnm{Alexandre~B}\binits{A.~B.}}
(\byear{2019}).
\btitle{Improved clustering algorithms for the Bipartite Stochastic Block
  Model}.
\bjournal{arXiv preprint arXiv:1911.07987}.
\end{barticle}
\endbibitem

\bibitem[\protect\citeauthoryear{Royer}{2017}]{royer}
\begin{binproceedings}[author]
\bauthor{\bsnm{Royer},~\bfnm{Martin}\binits{M.}}
(\byear{2017}).
\btitle{Adaptive Clustering through Semidefinite Programming}.
In \bbooktitle{Advances in Neural Information Processing Systems}
\bpages{1795--1803}.
\end{binproceedings}
\endbibitem

\bibitem[\protect\citeauthoryear{Vempala and Wang}{2004}]{vempala}
\begin{barticle}[author]
\bauthor{\bsnm{Vempala},~\bfnm{Santosh}\binits{S.}} \AND
  \bauthor{\bsnm{Wang},~\bfnm{Grant}\binits{G.}}
(\byear{2004}).
\btitle{A spectral algorithm for learning mixture models}.
\bjournal{Journal of Computer and System Sciences}
\bvolume{68}
\bpages{841--860}.
\end{barticle}
\endbibitem

\bibitem[\protect\citeauthoryear{Vershynin}{2018}]{vershinynHigh}
\begin{bbook}[author]
\bauthor{\bsnm{Vershynin},~\bfnm{Roman}\binits{R.}}
(\byear{2018}).
\btitle{High-dimensional probability: An introduction with applications in data
  science}
\bvolume{47}.
\bpublisher{Cambridge University Press}.
\end{bbook}
\endbibitem

\bibitem[\protect\citeauthoryear{Wu and Zhou}{2019}]{wu2019randomly}
\begin{barticle}[author]
\bauthor{\bsnm{Wu},~\bfnm{Yihong}\binits{Y.}} \AND
  \bauthor{\bsnm{Zhou},~\bfnm{Harrison~H}\binits{H.~H.}}
(\byear{2019}).
\btitle{Randomly initialized EM algorithm for two-component Gaussian mixture
  achieves near optimality in $ O (\sqrt{n}) $ iterations}.
\bjournal{arXiv preprint arXiv:1908.10935}.
\end{barticle}
\endbibitem

\bibitem[\protect\citeauthoryear{Xia and Zhou}{2017}]{xia}
\begin{barticle}[author]
\bauthor{\bsnm{Xia},~\bfnm{Dong}\binits{D.}} \AND
  \bauthor{\bsnm{Zhou},~\bfnm{Fan}\binits{F.}}
(\byear{2017}).
\btitle{The Sup-norm Perturbation of HOSVD and Low Rank Tensor Denoising}.
\bjournal{arXiv preprint arXiv:1707.01207}.
\end{barticle}
\endbibitem

\bibitem[\protect\citeauthoryear{Zhang, Cai and Wu}{2018}]{wu_sparse}
\begin{barticle}[author]
\bauthor{\bsnm{Zhang},~\bfnm{Anru}\binits{A.}},
  \bauthor{\bsnm{Cai},~\bfnm{T~Tony}\binits{T.~T.}} \AND
  \bauthor{\bsnm{Wu},~\bfnm{Yihong}\binits{Y.}}
(\byear{2018}).
\btitle{Heteroskedastic PCA: Algorithm, Optimality, and Applications}.
\bjournal{arXiv preprint arXiv:1810.08316}.
\end{barticle}
\endbibitem

\end{thebibliography}

\appendix

\section{Main proofs}

In all the proofs of lower bounds, we follow the same argument as in Theorem $2$ in \cite{gao2018} in order to substitute the minimax risk of $r(\tilde{\eta},\eta)$ by a Hamming minimax risk. Let $z^{*}$ be a vector of labels in $\{-1,1\}^{n}$ and let $T$ be a subset of $\{1,\dots,n\}$ of size $\lfloor n/2 \rfloor$+1. A lower bound of the minimax risk is given on the subset of labels $\mathbf{Z}$, such that for all $i \in T$,we have $\eta_{i}=z^{*}_{i}$.  Observe that  in that case 
$$r(\eta^{1},\eta^{2}) = |\eta^{1} - \eta^{2}|,$$
for any $\eta^{1},\eta^{2} \in \mathbf{Z}$. The argument in \cite{gao2018}, leads to
$$
\Psi_{\Delta} \geq \frac{c}{|T^{{\sf c}}|}\sum_{i \in T^{{\sf c}}}\underset{\tilde{\eta}_{i}}{\inf}\mathbb{E}_{\pi}\mathbf{E}_{(\bt,\eta)}|\tilde{\eta}_{i}-\eta_{i}|,
$$
for some $c>0$ and for any prior $\pi$ such that $\pi_{\eta}$ is invariant by a sign change. That is typically the case under Rademacher prior on labels. As a consequence, a lower bound of $\Psi_{\Delta}$ is given by a lower bound of the R.H.S minimax Hamming risk. 

\noindent Rescaling $\bar{\bt}$ by $\sigma$, we also assume that $\sigma = 1$, without loss of generality.
\subsection{Proof of Proposition \ref{prop:lower_trivial}}
Let $\bar{\bt}$ be a vector in $\mathbf{R}^{p}$ such that $\|\bar{\bt}\|=\Delta$. Placing an independent Rademacher prior $\pi$ on $\eta$, and fixing $\bt$, it follows that
\begin{equation}\label{eq:proof:triv:1}
\underset{\tilde{\eta}_{j}}{\inf}\mathbb{E}_{\pi}\mathbf{E}_{(\bar{\bt},\eta)}|\tilde{\eta}_{j}- \eta_{j}| \geq \underset{\bar{\eta}_{j}}{\inf}\mathbb{E}_{\pi}\mathbf{E}_{(\bar{\bt},\eta)}|\bar{\eta}_{j}(Y_{j}) - \eta_{j}|,
\end{equation}
where $\bar{\eta}_{j} \in [-1,1]$.
The last inequality holds because of independence between the priors. We define, for $\epsilon \in \{-1,1\}$, $\tilde{f}_{\epsilon}(.)$ the density of the observation $Y_{j}$ conditionally on the value of $
\eta_{j}=\epsilon$. Now, using Neyman-Pearson lemma and the explicit form of $\tilde{f}_{\epsilon}$, we get that the selector $\eta^{*}$ given by
$$
\eta_{j}^{*} = \text{sign}\left( \bar{\bt}^\top Y_{j}\right), \quad \forall j=1,\dots,n
$$
is the optimal selector that achieves the minimum of the RHS of \eqref{eq:proof:triv:1}.
Plugging this value in \eqref{eq:proof:triv:1}, we get further that 
$$
\underset{\bar{\eta}_{j}}{\inf}\mathbf{E}_{\pi}|\bar{\eta}_{j}(Y_{j}) - \eta_{j}| = 2\Phi^{{\sf c}}(\Delta).
$$
\subsection{Proof of Theorem \ref{prop:lower_smart}}
Throughout the proof, we write for brevity $A=\Omega_\Delta$.
Set $ \eta^{A} = \eta \mathbf{1}((\bt,\eta) \in A)$ and denote by $\bar{\pi}_A$ the probability measure $\pi$ conditioned by the event $\{(\bt,\eta) \in A\}$,  that is, for any $C\subseteq \mathbf{R}^{p}\times\{-1,1\}^{n}$,
$$
\bar{\pi}_A(C) = \frac{\pi (\{(\mathbf{\bt},\eta) \in C\}\cap \{(\bt,\eta) \in A\})}{\pi((\bt,\eta) \in A)}\,.
$$
The measure $\bar{\pi}_A$ is supported on $A$ and we have
\begin{eqnarray*}
\inf_{\tilde{\eta}_{j}} \mathbb{E}_{\bar{\pi}_{A}}\mathbf{E}_{(\bt,\eta)} |\tilde{\eta}_j - \eta_j |
&\ge&
\inf_{\tilde{\eta}_{j}} \mathbb{E}_{\bar{\pi}_{A}}\mathbf{E}_{(\bt,\eta)} |\tilde{\eta}_j - \eta^A_j |
\\
&\ge&   \inf_{\hat{T_j}} \mathbb{E}_{\bar{\pi}_{A}}\mathbf{E}_{(\bt,\eta)} |\hat{T}_j - \eta^A_j |
\end{eqnarray*}
where $\inf_{\hat T_j}$ is the infimum over all estimators $\hat T_j= \hat T_j(Y)$ with values in $\mathbf{R}$.
According to
Theorem~1.1 and Corollary~1.2 on page 228 in \cite{LC}, there exists a Bayes estimator $B^A_j=B^A_j(Y)$ such that
$$
\inf_{\hat{T_j}} \mathbb{E}_{\bar{\pi}_{A}}\mathbf{E}_{(\bt,\eta)} |\hat{T}_j - \eta^A_j |=\mathbb{E}_{\bar{\pi}_{A}}\mathbf{E}_{(\bt,\eta)} |B^A_j - \eta^A_j |,
$$
and this estimator is a conditional median of $\eta^A_j$ given $Y$.
Therefore,
\begin{equation}\label{eeq1}
\Psi_{\Delta}
\geq c\left(\frac{1}{\lfloor n/2 \rfloor}\sum_{j=1}^{\lfloor n/2 \rfloor}\mathbb{E}_{\bar{\pi}_{A}}\mathbf{E}_{(\bt,\eta)}  | B^A_j - \eta^{A}_{j}| \right).
\end{equation}
Note that $B^A_j\in [-1,1]$ since $\eta^A_j$ takes its values in $[-1,1]$.
Using this, we obtain
\begin{align}\nonumber
  \inf_{ \hat{T}_{j}\in[-1,1]} \mathbb{E}_{\pi}\mathbf{E}_{(\bt,\eta)} | \hat{T}_{j} - \eta_{j} | & \leq \mathbb{E}_{\pi}\mathbf{E}_{(\bt,\eta)} | B^A_j - \eta_{j}| \\ \nonumber
    & = \mathbb{E}_{\pi}\mathbf{E}_{(\bt,\eta)} \Big( |B^A_j - \eta_{j}| \mathbf{1}((\bt,\eta) \in A) \Big) + \mathbb{E}_{\pi}\mathbf{E}_{(\bt,\eta)}
   \Big(|B^A_j - \eta_{j}|  \mathbf{1}((\bt,\eta) \in A^c) \Big)
  \\  \nonumber
  & \leq \mathbb{E}_{\bar{\pi}_{A}}\mathbf{E}_{(\bt,\eta)}  |B^A_j - \eta_{j}^A|  + \mathbb{E}_{\pi}\mathbf{E}_{(\bt,\eta)}
   \Big( |B^A_j - \eta_{j}|  \mathbf{1}((\bt,\eta) \in A^c)  \Big)
   \\  \label{eeq2}
  & \le \mathbb{E}_{\bar{\pi}_{A}}\mathbf{E}_{(\bt,\eta)} |B^A_j - \eta_{j}^A|  + 2\mathbf{P}((\bt,\eta) \not\in A) .
 \end{align}
 The result follows combining \eqref{eeq1} and \eqref{eeq2}.

\subsection{Proof of Proposition \ref{thm:supervised_lower_bound}}
We start by using the fact that
$$ \mathbb{E}_{\pi^{\alpha}} \mathbf{E}_{(\bt,\eta)}|\hat{\eta}_{i} - \eta_{i}| =\mathbb{E}_{p_{-i}} \mathbb{E}_{p_{i}}\mathbf{E}_{(\bt,\eta)}\left( |\hat{\eta}_{i} - \eta_{i}| |(\eta_{j})_{j \neq i} \right), $$
where $p_{i}$ is the marginal of $\pi^{\alpha}$ on ($\bt$,$\eta_{i}$), while $p_{-i}$ is the marginal of $\pi^{\alpha}$ on $(\eta_{j})_{j \neq i} $. Using the independence between different priors, one may observe that $\pi^{\alpha} = p_{i}\times p_{-i}$. We define, for $\epsilon \in \{-1,1\}$, $\tilde{f}^{i}_{\epsilon}$ the density of the observation $Y$ given $(\eta_{j})_{j \neq i} $ and given $\eta_{i}=\epsilon$. Using Neyman-Pearson lemma, we get that
$$
\eta^{**}_{i} = \left\{
    \begin{array}{ll}
        1 & \mbox{if } \tilde{f}^{i}_{1}(Y) \geq \tilde{f}^{i}_{-1}(Y),\\
        -1 & \mbox{else,}
    \end{array}
\right.
$$
minimizes $\mathbb{E}_{p_{i}}\mathbf{E}_{(\bt,\eta)}\left( |\hat{\eta}_{i} - \eta_{i}| |(\eta_{j})_{j \neq i} \right)$ over all functions of $(\eta_{j})_{j\neq i}$ and of $Y$ with values in $[-1,1]$.
Using the independence of the rows of $Y$ we have
$$ \tilde{f}^{i}_{\epsilon}(Y) = \prod_{j=1}^{p}\frac{e^{-\frac{1}{2}L_{j}^{\top}\Sigma_{\epsilon}^{-1}L_{j}}}{(2\pi)^{p/2}|\Sigma_{\epsilon}|}, $$
where $L_{j}$ is the $j$-th row of $Y$ and $\Sigma_{\epsilon}= \mathbf{I}_{n} + \alpha^{2} \eta_{\epsilon} \eta_{\epsilon}^{\top}$. We denote by $\eta_{\epsilon}$ the binary vector such that $\eta_{i} = \epsilon$ and the other components are known. It is easy to check that $|\Sigma_{\epsilon}|=1+\alpha^{2}n$, hence it does not depend on $\epsilon$. A simple calculation leads to
$$ \Sigma_{\epsilon}^{-1} = \mathbf{I}_{n}- \frac{\alpha^{2}}{1+\alpha^{2}n}\eta_{\epsilon}\eta_{\epsilon}^{\top}. $$
Hence 

\begin{equation}
\begin{aligned}
\frac{\tilde{f}^{i}_{1}(Y)}{\tilde{f}^{i}_{-1}(Y)} &= \prod_{j=1}^{p}e^{-\frac{1}{2}L_{j}^{\top}(\Sigma_{1}^{-1} - \Sigma_{-1}^{-1})L_{j}} \nonumber \\
& = \prod_{j=1}^{p}e^{\frac{\alpha^{2}}{1+\alpha^{2}n}L_{ji}\sum_{k \neq i}L_{jk}\eta_{k}} \nonumber \\
&= e^{\frac{\alpha^{2}}{1+\alpha^{2}n}\sum_{k \neq i} \eta_{k} \sum_{j=1}^{p}L_{jk}L_{ji}} = e^{\frac{\alpha^{2}}{1+\alpha^{2}n} \langle Y_{i},\sum_{k \neq i}\eta_{k}Y_{k} \rangle } . \nonumber
\end{aligned}
\end{equation}
It is now immediate that
$$ \eta^{**}_{i} = \text{sign}\left(  Y_{i}^{\top} \left(\sum_{k \neq i}\eta_{k}Y_{k}\right)  \right). $$
\subsection{Proof of Proposition  \ref{prop:lower:simple}}
Combining Theorem \ref{prop:lower_smart} and Proposition \ref{thm:supervised_lower_bound}, we get that
$$ \Psi_{\Delta} \geq c\left( \frac{1}{\lfloor n/2 \rfloor}\sum_{i=1}^{\lfloor n/2 \rfloor}\mathbb{E}_{\pi^{\alpha}}\mathbf{E}_{(\bt,\eta)}|\eta^{**}_{i} - \eta_{i}| - \pi_{\bt}^{\alpha}\left( \|\bt \| \leq \Delta \right)\right).$$
Recall that here $\bt$ has i.i.d. centered Gaussian entries with variance $\alpha^{2}$. This yields the second term on the R.H.S of the inequality of Proposition \ref{prop:lower:simple}. While, for the first term, one may notice that the vectors $\eta_{i}Y_{i}$ for $i=1,\dots,n$ are i.i.d. and that
$$
|\eta^{**}_{i} - \eta_{i}| = 2\mathbf{1}\left(\eta_{i}Y_{i}^{\top}\left(\sum_{j\neq i}\eta_{j}Y_{j} \right) \leq 0 \right).
$$
Then, we use the definition of $G_{\sigma}$ \eqref{def:G} in order to conclude.
\subsection{Proof of Theorem \ref{thm:lower_asymptotic}}
 We prove the result by considering separately the following three cases.
\begin{enumerate}
    \item Case $\Delta \leq \frac{\log^{2}(n)}{\sqrt{n}}$.
In this case we use Proposition \ref{prop:lower_trivial}.

Since $0 \leq \frac{\Delta^{2}}{\sqrt{\Delta^{2}+p/n}} \leq \Delta$, we have $\left|\Delta - \frac{\Delta^{2}}{\sqrt{\Delta^{2}+p/n}}\right| \leq \frac{\log^{2}(n)}{\sqrt{n}}$. Hence
$$
\left| \Phi^{{\sf c}}(\Delta) - \Phi^{{\sf c}}\left( \frac{\Delta^{2}}{\sqrt{\Delta^{2}+p/n}}\right) \right| \leq c \frac{\log^{2}(n)}{\sqrt{n}}\Phi^{{\sf c}}\left( \frac{\Delta^{2}}{\sqrt{\Delta^{2}+p/n}}\right),
$$
for some $c>0$. Hence we get the result with $\epsilon_{n} = c \frac{\log^{2}(n)}{\sqrt{n}}$.
    \item Case $\Delta \geq \sqrt{\frac{p\log{n}}{n}}$.
In this case, we have $\sqrt{1+\frac{p}{n\Delta^{2}}}\frac{\Delta^{2}}{\sqrt{\Delta^{2}+p/n}} = \Delta$. It is easy to check that
$$
\left| \sqrt{1+\frac{p}{n\Delta^{2}}} - 1 \right| \leq \frac{1}{\log{n}}.
$$
Hence 
$$ \Delta \leq \frac{\Delta^{2}}{\sqrt{\Delta^{2}+p/n}}(1+\epsilon_{n}), $$
for $\epsilon_{n} = \frac{1}{\log{n}}$. We conclude using Proposition \ref{prop:lower_trivial}.
    \item Case $\frac{\log^{2}(n)}{\sqrt{n}} < \Delta < \sqrt{\frac{p\log{n}}{n}}$.
Notice that $p \geq \log^{3}(n)$ in this regime. We will use Proposition \ref{prop:lower:simple}. Set $\alpha^{2}$ such that
$$
\alpha^{2} = \frac{\Delta^{2}}{p(1-\nu_{n})} \quad \text{and} \quad \nu_{n} = \sqrt{\frac{n\Delta^{2}}{p\log^{2}(n)}}.
$$
It is easy to check that $0<\nu_{n}^{2} \leq 1/\log{n}$, Hence
$$ \mathbf{P}\left( \sum_{j=1}^{p} \varepsilon_{j}^{2}\leq \frac{\Delta^{2}}{\alpha^{2}} \right) = \mathbf{P}\left( \frac{1}{p}\sum_{j=1}^{p} (\varepsilon_{j}^{2}-1)\leq -\nu_{n}\right)\leq e^{-c\frac{n}{\log^{2}(n)}\Delta^{2}}, $$
for some $c>0$. 
Hence, for any $\epsilon_{n}\to 0$ we have
$$
\mathbf{P}\left( \sum_{j=1}^{p} \varepsilon_{j}^{2}\leq \frac{\Delta^{2}}{\alpha^{2}} \right) \leq e^{-c'\log{n}}\Phi^{{\sf c}}\left( \Delta(1+\epsilon_{n})\right) \leq e^{-c'\log{n}}\Phi^{{\sf c}}\left( \frac{\Delta^{2}}{\sqrt{\Delta^{2}+p/n}}(1+\epsilon_{n})\right) , 
$$
for some $c'>0$. Since $e^{-c'\log{n}} \underset{n \to \infty}{\to} 0$, then in order to conclude, we just need to prove that
$$ \mathbb{E}_{\pi_{\bt}^{\alpha}}G_{\sigma}(0,\bt) \geq (1-\epsilon_{n})\Phi^{{\sf c}}\left( \frac{\Delta^{2}}{\sqrt{\Delta^{2}+p/n}}(1+\epsilon_{n})\right), $$
for some sequence $\epsilon_{n}\to0$.

We recall that
$$ \mathbb{E}_{\pi_{\bt}^{\alpha}}G_{\sigma}(0,\bt) = \mathbf{P}\left( (\bt + \xi_{1})^{\top}\left( \bt + \frac{\xi_{2}}{\sqrt{n-1}}\right) \leq 0 \right),  $$
where $\xi_{1},\xi_{2}$ are two independent random vectors with i.i.d. standard Gaussian entries and $\bt$ is an independent Gaussian prior. Moreover, using independence, we have
\begin{align*} 
\mathbf{P}&\left( (\bt + \xi_{1})^{\top}\left( \bt + \frac{\xi_{2}}{\sqrt{n-1}}\right) \leq 0 \right)= \\ &\mathbf{P}\left( \varepsilon\sqrt{\|\bt\|^{2} + \frac{\|\xi_{2}\|^{2}}{n-1} + \frac{2}{\sqrt{n-1}} \bt^{\top}\xi_{2}} \geq  \|\bt\|^{2} + \frac{1}{\sqrt{n-1}} \bt^{\top}\xi_{2} \right),  
\end{align*}
where $\varepsilon$ is a standard Gaussian random variable. Fix $\bt$ and define the random event
$$
\mathcal{A} = \left\{ \frac{\|\xi_{2}\|^{2}}{n-1} \geq \frac{p}{n-1}(1-\zeta_{n}) \right\}\cap\left\{ |\bt^{\top}\xi_{2}| \leq \sqrt{n-1}\beta_{n}\|\bt\|^{2} \right\},
$$
where $\beta_{n}>0$ and $\zeta_{n} \in (0,1)$.
It is easy to check that 
\begin{equation}\label{eq:25a}
\mathbf{P}\left( \mathcal{A}^{{\sf c}}\right) \leq e^{-c\log^{3}(n)\zeta_{n}^{2}} + e^{-c\beta_{n}^{2}n\|\bt\|^{2}},
\end{equation}
for some $c>0$. Hence conditioning on $\bt$, we have
$$
\mathbf{P}\left( (\bt + \xi_{1})^{\top}\left( \bt + \frac{\xi_{2}}{\sqrt{n-1}}\right) \leq 0 \right) \geq \mathbf{E}\left[\Phi^{{\sf c}}\left(\frac{ \|\bt\|^{2}(1+\beta_{n}) }{\sqrt{\|\bt\|^{2}(1-2\beta_{n}) + \frac{p}{n-1}(1-\zeta_{n})  }}         \right) \mathbf{P}(\mathcal{A})\right].
$$
where the last expectation is over $\bt$. Define now the random event $\mathcal{B}=\left\{ \left| \|\bt\|^{2}   - \Delta^{2}\right| \leq \Delta^{2}\gamma_{n}\right\}$ where $\gamma_{n}\in(0,1)$. Then, using \eqref{eq:25a}, we get
\begin{equation}\label{eq:lower:hard}
\mathbf{P}\left( (\bt + \xi_{1})^{\top}\left( \bt + \frac{\xi_{2}}{\sqrt{n-1}}\right) \leq 0 \right) \geq \Phi^{{\sf c}}\left( U_{n} \right)\left(1- e^{-c\log^{3}{(n)}\zeta_{n}^{2}} - e^{-c\beta_{n}^{2}(1-\gamma_{n})\log^{4}(n)}\right) \mathbf{P}(\mathcal{B}),
\end{equation}
where $U_{n} := \frac{ \Delta^{2}(1+\beta_{n})(1+\gamma_{n}) }{\sqrt{\Delta^{2}(1-2\beta_{n})(1-\gamma_{n}) + \frac{p}{n-1}(1-\zeta_{n})  }} . $
Now we may check that
$$
\mathbf{P}(\mathcal{B}^{{\sf c}}) = \mathbf{P}\left( \left|\sum_{j=1}^{p} \varepsilon_{j}^{2} - \frac{\Delta^{2}}{\alpha^{2}} \right|\geq \frac{\Delta^{2}}{\alpha^{2}}\gamma_{n} \right).
$$
Hence
$$
\mathbf{P}(\mathcal{B}^{{\sf c}}) \leq \mathbf{P}\left( \left|\sum_{j=1}^{p} \varepsilon_{j}^{2} - p\right|\geq \frac{\Delta^{2}}{\alpha^{2}}\gamma_{n} - \left|p - \frac{\Delta^{2}}{\alpha^{2}}\right|  \right).
$$
Using the definition of $\alpha^{2}$ we get 
\begin{equation}\label{eq:lower:hard:2}
\mathbf{P}(\mathcal{B}^{{\sf c}}) \leq \mathbf{P}\left( \left|\sum_{j=1}^{p} \varepsilon_{j}^{2} - p\right|\geq p((1-\nu_{n})\gamma_{n}-\nu_{n})  \right) \leq 2e^{-c\log^{3}(n)\gamma_{n}^{2}},
\end{equation}
for some $c>0$ whenever  $ 4 \nu_{n} \leq \gamma_{n} \leq 1$.
Using the inequality $\nu^{2}_{n} \leq 1/\log{n}$, and choosing $\beta^{2}_{n} = 1/\log{n}$, $\gamma^{2}_{n} = 16/\log{n}$ and $\zeta^{2}_{n} = 1/\log{n}$, we get the desired result by combining \eqref{eq:lower:hard} and \eqref{eq:lower:hard:2}.
\end{enumerate}
\subsection{Proof of Theorem \ref{thm:spectral}}
We begin by writing that
$$
\frac{1}{n}Y^{\top}Y = \frac{\|\bt\|^{2}}{n}\eta\eta^{\top} + Z_{1},
$$
where
$$
Z_{1} = \frac{1}{n}\eta\bt^{\top}W + \frac{1}{n}W^{\top}\bt\eta^{\top} + \frac{1}{n}W^{\top}W.
$$
Next observe that
$$
H\left(\frac{1}{n}Y^{\top}Y \right) = \frac{\|\bt\|^{2}}{n}\eta\eta^{\top} + Z_{2},
$$
where $Z_{2}$ is given by
$$ Z_{2}=  H\left( Z_{1}\right) - \frac{\|\bt\|^{2}}{n}\mathbf{I}_{n}. $$
Based on Lemma \ref{lem:diagonal}, we have
\begin{equation}\label{eq:useful}
\|Z_{2}\|_{op} \leq 4\left\|\frac{1}{n}\eta\bt^{\top}W \right\|_{op} + 2\left\|\frac{1}{n}W^{\top}W-\mathbf{E}\left(\frac{1}{n}W^{\top}W\right)\right\|_{op} + \frac{\|\bt\|^{2}}{n}.
\end{equation}
Using the Davis-Kahan $\sin{\theta}$ Theorem (cf .Theorem $4.5.5$ in \cite{vershinynHigh}), we obtain
\begin{equation}\label{eq:kahan}
\underset{\nu \in \{-1,1\}}{\min}\left\|\hat{v} - \frac{1}{\sqrt{n}} \nu\eta \right\|^{2} \leq 8 \frac{\|Z_{2}\|^{2}_{op}}{\|\bt\|^{4}}.
\end{equation}
Hence, using Lemma \ref{lem:round}, we get
\begin{equation}\label{eq:round:precise}
    \frac{1}{n}r(\eta^{0},\eta) \leq 16 \frac{\|Z_{2}\|^{2}_{op}}{\|\bt\|^{4}} \leq \frac{512}{\|\bt\|^{4}}\left( \left\|\frac{1}{n}\eta\bt^{\top}W \right\|_{op} + \left\|\frac{1}{n}W^{\top}W-\mathbf{E}\left(\frac{1}{n}W^{\top}W\right)\right\|_{op} \right)^2+\frac{32}{n^{2}}.
\end{equation}
Since $\mathbf{r}_{n}\geq C$, for some $C$ large enough. We may assume that $\|\bt\|^{2} \geq 1$ so that $1+p/n \leq \|\bt\|^{2}+p/n$. The inequality in expectation is a consequence of Lemma \ref{lem:outside} and Lemma \ref{lem:inside}.

\noindent For the inequality in probability, we first observe, using \eqref{eq:round:precise}, that
$$
\frac{1}{n}|\eta^{\top}\eta^{0}| \geq 1-8\frac{\|Z_{2}\|^{2}_{op}}{\|\bt\|^{4}}.
$$
Next, and since $\mathbf{r}_{n}\geq C$ for some $C$ large enough, observe that for $C' > 8$, we have
$$
\mathbf{P}_{(\bt,\eta)}\left( \frac{1}{n}|\eta^{\top}\eta^{0}| \leq 1 - \frac{\log{n}}{n} - \frac{C'}{\mathbf{r}_{n}^{2}}\right) \leq A_{1}+A_{2},
$$
where
$$A_{1}=
\mathbf{P}_{(\bt,\eta)}\left(\left\|\frac{1}{n}\eta \theta^{\top}W  \right\|_{op} \geq \sqrt{\frac{\log{n}}{2n}}\|\bt\|^{2} + 2\|\bt\| \right),
$$
and
$$
A_{2}=
\mathbf{P}_{(\bt,\eta)}\left(\left\|\frac{1}{n}W^{\top}W - \mathbf{E}\left( \frac{1}{n}W^{\top}W\right) \right\|_{op} \geq \sqrt{\frac{\log{n}}{2n}}\|\bt\|^{2} + C'\left(1 \vee \sqrt{p/n}\right) \right),
$$
Using Lemma \ref{lem:outside} and Lemma \ref{lem:inside}, we get
$$
\mathbf{P}_{(\bt,\eta)}\left( \frac{1}{n}|\eta^{\top}\eta^{0}| \leq 1 - \frac{\log{n}}{n} - \frac{C}{\mathbf{r}_{n}^{2}}\right) \leq 2e^{-c\sqrt{n\log{n}}\|\bt\|^{2}(1 \wedge \frac{\sqrt{n\log{n}}\|\bt\|^{2}}{p})} \leq e^{-c\sqrt{\log{n}}\mathbf{r}_{n}^{2}},
$$
Using the tail Gaussian function, we conclude easily that
$$
e^{-c\sqrt{\log{n}}\mathbf{r}_{n}^{2}} = o(\Phi^{{\sf c}}(\mathbf{r}_{n})).
$$

\subsection{Proof of Theorem \ref{thm:initial_lloyds}}
By the definition of $r(\hat{\eta},\eta)$, we may assume w.l.o.g that $\eta^{\top}\hat{\eta}^{0}>0$. Define the random events $\mathbf{A}_{i}$ for $i=1,\dots,n$, $\mathbf{B}$ and $\mathbf{C}$ such that for all $i=1,\dots,n$ 
$$
\mathbf{A}_{i} = \left\{  \left(\frac{1}{n}\mathbf{H}(Y^{\top}Y)_{i}^{\top}\eta\right)  \eta_{i} \geq \|\bt\|^{2} \left(\frac{8C}{\mathbf{r}_{n}} + \frac{C'}{\mathbf{r}_{n}^{2}} + 8c'\sqrt{\frac{\log{n}}{n}} + \nu_{n}\right) \right\},
$$
$$
\mathbf{C} = \left\{    \frac{1}{n}\sum_{i=1}^{n}\mathbf{1}_{\mathbf{A}_{i}} \leq \frac{C'}{4\mathbf{r}_{n}^{2}}\right\}
$$
and
$$
\mathbf{B} = \left\{ \|Z_{2}\|_{op} \leq c'\sqrt{\frac{\log{n}}{n}}\|\bt\|^{2} + C\left(1 \vee \sqrt{p/n}\right) + 2 \|\bt\|\right\},
$$
where we use the same notation of the previous proof and $c'$ a positive constant that we may choose large enough. 

We first prove, by induction, that on the event $\mathbf{B} \cap \mathbf{C}$, we have
$$
\frac{1}{n}\eta^{\top}\hat{\eta}^{k} \geq1- \frac{C'}{\mathbf{r}_{n}^{2}}-\nu_{n}, \quad \forall k =0,1,\dots
$$
For $k=0$, the result is obvious. Let $k \geq 1$. Assume that the result holds for $k$, and we prove it for $k+1$. Remember that 
$$
\frac{1}{n}\mathbf{H}(Y^{\top}Y) =\frac{1}{n}  \|\bt\|^{2}\eta\eta^{\top}  + Z_{2}.
$$
A simple calculation leads to
$$
 \frac{1}{n}\mathbf{H}(Y^{\top}Y)_{i}^{\top}\hat{\eta}^{k}  =  (Z_{2})_{i}^{\top}(\hat{\eta}^{k}-\eta) + \frac{1}{n}\mathbf{H}(Y^{\top}Y)_{i}^{\top}\eta - \|\bt\|^{2}\eta_{i}\frac{n- \eta ^{\top}\hat{\eta}^{k}}{n}.
$$
Hence if $\eta_{i} = -1$ and if $\mathbf{A}_{i}$ is true, then using the induction hypothesis we get
$$
 \frac{1}{n}\mathbf{H}(Y^{\top}Y)_{i}^{\top}\hat{\eta}^{k} \leq (Z_{2})_{i}^{\top}(\hat{\eta}^{k}-\eta ) -\|\bt\|^{2}\left( \frac{8C}{\mathbf{r}_{n}} +8c' \sqrt{\frac{\log{n}}{n}}\right).
$$
Hence when $\eta_{i} = -1$ we have
$$
\mathbf{1}_{\left\{  \frac{1}{n}\mathbf{H}(Y^{\top}Y)_{i}^{\top}\hat{\eta}^{k}  \geq 0  \right\} }\mathbf{1}_{\mathbf{A}_{i}} \leq \mathbf{1}_{ \left\{ (Z_{2})_{i}^{\top}(\hat{\eta}^{k}-\eta ) \geq \|\bt\|^{2}\left( \frac{8C}{\mathbf{r}_{n}} +8c' \sqrt{\frac{\log{n}}{n}}\right) \right\} } \leq \left(\frac{ (Z_{2})_{i}^{\top}(\hat{\eta}^{k}-\eta )}{\|\bt\|^{2}\left( \frac{8C}{\mathbf{r}_{n}} +8c' \sqrt{\frac{\log{n}}{n}}\right)  }\right)^{2}.
$$
similarly we get for $\eta_{i} = 1$ that 
$$
\mathbf{1}_{\left\{  \frac{1}{n}\mathbf{H}(Y^{\top}Y)_{i}^{\top}\hat{\eta}^{k}  \leq 0  \right\} }\mathbf{1}_{\mathbf{A}_{i}} \leq \left(\frac{ (Z_{2})_{i}^{\top}(\hat{\eta}^{k}-\eta )}{\|\bt\|^{2}\left( \frac{8C}{\mathbf{r}_{n}} +8c' \sqrt{\frac{\log{n}}{n}}\right)  }\right)^{2}.
$$
It is clear that
$$
\frac{1}{2}|\hat{\eta}^{k+1} - \eta| = \sum_{\eta_{i}=-1}\mathbf{1}_{\left\{  \frac{1}{n}\mathbf{H}(Y^{\top}Y)_{i}^{\top}\hat{\eta}^{k}  \geq 0  \right\} }  + \sum_{\eta_{i}=1}\mathbf{1}_{\left\{  \frac{1}{n}\mathbf{H}(Y^{\top}Y)_{i}^{\top}\hat{\eta}^{k}  \leq 0  \right\} }  .
$$
Hence we get using the events $\mathbf{A}_{i}$ for $i=1,\dots,n$, that 
\begin{equation}\label{eq:rec}
\frac{1}{2n}|\hat{\eta}^{k+1} - \eta| \leq \frac{ \|Z_{2}\|^{2}_{op}}{\|\bt\|^{4}\left( \frac{8C}{\mathbf{r}_{n}} +8c' \sqrt{\frac{\log{n}}{n}}\right)^{2}  } \frac{\|\hat{\eta}^{k}-\eta \|^{2}}{n} +  \frac{1}{n}\sum_{i=1}^{n}\mathbf{1}_{\mathbf{A}^{{\sf c}}_{i}}.
\end{equation}
Using the induction hypothesis and the events $\mathbf{B}$ and $\mathbf{C}$, we get
$$
1 - \frac{1}{n}\eta^{\top}\hat{\eta}^{k+1} \leq 4  \left(\frac{c'\sqrt{\frac{\log{n}}{n}}\|\bt\|^{2} + C\left(1 \vee \sqrt{p/n}\right)+ 2\|\bt\|}{\|\bt\|^{2}\left( \frac{8C}{\mathbf{r}_{n}} +8c' \sqrt{\frac{\log{n}}{n}}\right)}  \right)^{2} (C'/\mathbf{r}_{n}^{2}+\nu_{n}) + \frac{C'}{2\mathbf{r}_{n}^{2}}.
$$
Since $\mathbf{r}_{n}>C$ for $C$ large enough, then $(1\vee \sqrt{p/n}) \leq \| \bt \|^{2}/\mathbf{r}_{n}$ and $2\|\bt\| \leq \frac{\|\bt\|^2}{\mathbf{r}_n}$. It comes that
$$
\frac{1}{n}\eta^{\top}\hat{\eta}^{k+1} \geq 1 - \frac{C'}{\mathbf{r}_{n}^{2}} - \nu_{n}. 
$$
That concludes that on $\mathbf{B}\cap \mathbf{C}$, for all $k = 0,1,\dots$ we get
$$
\frac{1}{n}\eta^{\top}\hat{\eta}^{k} \geq 1 - \frac{C'}{\mathbf{r}_{n}^{2}} - \nu_{n}. 
$$
Hence, and using \eqref{eq:rec}, we obtain 
$$
\frac{1}{n}|\hat{\eta}^{k+1} - \eta|\mathbf{1}_{\mathbf{B}}\mathbf{1}_{\mathbf{C}} \leq \frac{1}{4}\frac{1}{n}|\hat{\eta}^{k}-\eta|\mathbf{1}_{\mathbf{B}}\mathbf{1}_{\mathbf{C}} + \frac{2}{n}\sum_{i=1}^{n}\mathbf{1}_{\mathbf{A}^{{\sf c}}_{i}}.
$$
As a consequence we find that for $k=0,1,\dots$
$$
\frac{1}{n}|\hat{\eta}^{k} - \eta|\mathbf{1}_{\mathbf{B}}\mathbf{1}_{\mathbf{C}} \leq 2 \left(\frac{1}{4}\right)^{k} + \frac{8}{3n}\sum_{i=1}^{n}\mathbf{1}_{\mathbf{A}^{{\sf c}}_{i}}.
$$
Observe that for $k \geq \lfloor 3\log{n} \rfloor$, we have $k \geq 2\frac{\log{n}}{ \log{4}}$ and 
$$
\left(\frac{1}{4}\right)^{k} \leq \frac{1}{n^{2}}.
$$
Hence for $k \geq \lfloor 3\log{n} \rfloor$,
$$
\frac{1}{n}|\hat{\eta}^{k} - \eta|\mathbf{1}_{\mathbf{B}}\mathbf{1}_{\mathbf{C}} \leq \frac{2}{n^{2}} + \frac{8}{3n}\sum_{i=1}^{n}\mathbf{1}_{\mathbf{A}^{{\sf c}}_{i}}.
$$
When $ \frac{1}{n}\sum_{i=1}^{n}\mathbf{1}_{\mathbf{A}^{{\sf c}}_{i}} = 0  $ then $ \frac{1}{n}|\hat{\eta}^{k} - \eta|\mathbf{1}_{\mathbf{B}}\mathbf{1}_{\mathbf{C}}=0$. In the opposite case, $ \frac{1}{n}\sum_{i=1}^{n}\mathbf{1}_{\mathbf{A}^{{\sf c}}_{i}} \geq \frac{1}{n}$. This leads to
$$
\frac{1}{n}|\hat{\eta}^{k} - \eta|\mathbf{1}_{\mathbf{B}}\mathbf{1}_{\mathbf{C}} \leq  \frac{14}{3n}\sum_{i=1}^{n}\mathbf{1}_{\mathbf{A}^{{\sf c}}_{i}}.
$$
Finally we get for $k\geq \lfloor3\log{n} \rfloor$,
$$
\frac{1}{n}\mathbf{E}\left(|\hat{\eta}^{k} - \eta| \right)\leq  \frac{14}{3n}\sum_{i=1}^{n}\mathbf{P}\left( \mathbf{A}^{{\sf c}}_{i} \right) + \mathbf{P}\left(\mathbf{B}^{{\sf c}} \right) + \mathbf{P}\left(\mathbf{C}^{{\sf c}} \right) \leq \left( \frac{14}{3} + \frac{4\mathbf{r}_{n}^{2}}{C'} \right)\frac{1}{n}\sum_{i=1}^{n}\mathbf{P}\left( \mathbf{A}^{{\sf c}}_{i} \right) + \mathbf{P}\left(\mathbf{B}^{{\sf c}} \right)  .
$$
The term $\mathbf{P}\left(\mathbf{B}^{{\sf c}} \right)$ is upper bounded exactly as in the proof of Theorem \ref{thm:spectral} and we have
$$
\mathbf{P}(\mathbf{B}^{{\sf c}}) = o(\Phi^{{\sf c}}(\mathbf{r}_{n})).
$$
For the other term observe that
$$
\mathbf{P}\left( \mathbf{A}^{{\sf c}}_{i} \right) \leq G_{\sigma}\left(\frac{C''}{\mathbf{r}_{n}}+\epsilon_{n},\|\bt\|^{2}\right),
$$
for some $C''>0$ and $\epsilon_{n}=o(1)$. That concludes the proof.

\subsection{Proof of Theorem \ref{thm:spectral_lloyds}}
Combining Theorem \ref{thm:spectral} and Theorem \ref{thm:initial_lloyds}, it is enough to prove that 
$$
\mathbf{r}_{n}^{2}\underset{\|\bt\| \geq \Delta }{\sup}G_{\sigma}\left( \epsilon_{n}+ \frac{C'}{\mathbf{r}_{n}},\bt\right) \leq \Phi^{{\sf c}}\left(\mathbf{r}_{n}\left(1-\epsilon'_{n}-\frac{C''\log{\mathbf{r}_{n}}}{\mathbf{r}_{n}} \right) \right)+\epsilon'_{n}\Phi^{{\sf c}}(\mathbf{r}_{n}),
$$
for some $\epsilon'_{n} = o(1)$ and $C''>0$.
Recall that
$$ G_{\sigma}\left(\epsilon_{n}+\frac{C'}{\mathbf{r}_{n}},\bt\right) = \mathbf{P}\left( (\bt + \xi_{1})^{\top}\left( \bt + \frac{\xi_{2}}{\sqrt{n-1}}\right) \leq \left( \epsilon_{n} + \frac{C'}{\mathbf{r}_{n}}\right)\|\bt\|^{2}  \right),  $$
where $\xi_{1},\xi_{2}$ are two independent Gaussian random vector with i.i.d. standard entries and $\bt$ and independent Gaussian prior. Moreover, using independence, we have
$$ G_{\sigma}\left(\epsilon_{n}+\frac{C'}{\mathbf{r}_{n}} ,\bt\right) = \mathbf{P}\left( \varepsilon\sqrt{\|\bt\|^{2} + \frac{\|\xi_{2}\|^{2}}{n-1} + \frac{2}{\sqrt{n-1}} \bt^{\top}\xi_{2}} \geq  \|\bt\|^{2}\left(1 - \epsilon_{n}-\frac{C'}{\mathbf{r}_{n}} \right) + \frac{1}{\sqrt{n-1}} \bt^{\top}\xi_{2} \right),  $$
where $\varepsilon$ is a standard Gaussian random variable. Set the random event
$$
\mathcal{A} = \left\{ \frac{\|\xi_{2}\|^{2}}{n-1} \leq \frac{p}{n-1}+\zeta_{n}\|\bt\|^{2} \right\}\cap\left\{ |\bt^{\top}\xi_{2}| \leq \sqrt{n-1}\beta_{n}\|\bt\|^{2} \right\},
$$
where $\zeta_{n}$ and $\beta_{n}$ are positive sequences.
It is easy to check that 
$$
\mathbf{P}\left( \mathcal{A}^{{\sf c}}\right) \leq e^{-c\|\bt\|^{4}n^{2}\zeta_{n}^{2}/p  } +2 e^{-c\beta_{n}^{2}n\|\bt\|^{2}} + e^{-c\zeta_{n}n\|\bt\|^{2}},
$$
for some $c>0$. Hence using the event $\mathcal{A}$, we get
$$
G_{\sigma}\left(\epsilon_{n}+\frac{C'}{\mathbf{r}_{n}} ,\bt\right) \leq  \mathbf{P}\left( \varepsilon\sqrt{\|\bt\|^{2}(1+\zeta_{n}+2\beta_{n}) + \frac{p}{n-1}} \geq  \|\bt\|^{2}\left(1 - \epsilon_{n}-\frac{C'}{\mathbf{r}_{n}}-\beta_{n} \right) \right) + \mathbf{P}(\mathcal{A}^{{\sf c}}).
$$
By choosing $\beta_{n} = \zeta_{n} = \sqrt{\frac{\log{n}}{n}}$, we get that 
$$
\mathbf{P}(\mathcal{A}^{{\sf c}}) \leq 4 e^{-c\sqrt{\log{n}}\mathbf{r}_{n}}.
$$
The last fact is due to the condition $\mathbf{r}_{n}\geq C$ for some $C>0$. Hence 
$$
\mathbf{P}(\mathcal{A}^{{\sf c}}) = o(\Phi^{{\sf c}}(\mathbf{r}_{n})).
$$
Moreover and since $\zeta_{n}$ and $\beta_{n}$ are vanishing sequences as $n \to \infty$, we get that
\begin{align*}
\mathbf{P}&\left( \varepsilon\sqrt{\|\bt\|^{2}(1+\zeta_{n}+2\beta_{n}) + \frac{p}{n-1}} \geq  \|\bt\|^{2}\left(1 - \epsilon_{n}-\frac{C'}{\mathbf{r}_{n}}-\beta_{n} \right) \right) =\\
&\Phi^{{\sf c}}\left( \frac{\|\bt\|^{2}}{\sqrt{\|\bt\|^2+\frac{p}{n}}}\left( 1 - \frac{C'}{\mathbf{r}_{n}}-\epsilon'_{n}\right) \right),
\end{align*}
for some $\epsilon'_{n} = o(1)$. We conclude using the fact that $x \to \frac{x}{\sqrt{x+\frac{p}{n}}}$ is non-decreasing on $\mathbf{R}^{+}$ and the fact that for $C<x<y$, we have  $x^{2}\Phi^{{\sf c}}(y)\leq c_{1}\Phi^{{\sf c}}(y - c_{2} \log{x}) $, for some $c_1,c_2>0$.

\subsection{Proof of Proposition \ref{prop:match_lower}}
Set $n$ large enough. According to Theorem \ref{thm:lower_asymptotic}, we have
\begin{equation}\label{eq:test2}
\Psi_{\Delta} \geq \frac{1}{2}\Phi^{{\sf c}}(2\mathbf{r}_{n}).
\end{equation}
For the upper bound. If $\mathbf{r}_{n}$ is larger than $2C$, then using Theorem \ref{thm:spectral_lloyds}, we get
\begin{equation}\label{eq:test1}
\Psi_{\Delta} \leq C' \Phi^{{\sf c}}\left(\frac{\mathbf{r}_{n}}{4}\right),
\end{equation}
for some $C'>0$. Observe that for $\mathbf{r}_{n}\leq 2C$, we have
$$
   c_{1} \leq \Phi^{{\sf c}}(\mathbf{r}_{n}),
$$
for some $c_{1}>0$. Hence, for $\mathbf{r}_{n}\leq 2C$, we get
\begin{equation}\label{eq:test3}
\Psi_{\Delta} \leq \frac{\Phi^{{\sf c}}(\mathbf{r}_{n})}{c_{1}}.
\end{equation}
We conclude combining \eqref{eq:test1}, \eqref{eq:test2} and \eqref{eq:test3}.
\subsection{Proof of Theorem \ref{thm:asymp1}}
\begin{itemize}
    \item \textit{Necessary conditions:}

According to Theorem \ref{thm:lower_asymptotic}, we have
$$
\Psi_{\Delta} \geq C'\Phi^{{\sf c}}(\mathbf{r}_{n}(1+\epsilon_{n})),
$$
for some $C'>0$ and $\epsilon_{n} = o(1)$.
If for some $\epsilon>0$, 
$$
\underset{n \to \infty}{\lim \inf}  n\Phi^{{\sf c}}(\mathbf{r}_{n}(1+\epsilon))>0.
$$
Hence, using the monotonicity of $\Phi^{{\sf c}}(.)$, we conclude that exact recovery is impossible. 

For Almost full recovery, assume that $\Phi^{{\sf c}}(\mathbf{r}_{n})$ does not converge to $0$, and that almost full recovery is possible. Then
using continuity and monotonicity of $\Phi^{{\sf c}}(.)$, we get that $ \mathbf{r}_{n}(1+\epsilon_{n}) \to \infty$. Hence $\mathbf{r}_{n}\to \infty$ and $\Phi^{{\sf c}}(\mathbf{r}_{n}) \to 0$ which is absurd. That proves that the condition $\Phi^{{\sf c}}(\mathbf{r}_{n}) \to 0$ is necessary to achieve almost full recovery.
\item \textit{Sufficient conditions:}

According to Theorem \ref{thm:spectral_lloyds}, we have that, under the condition $\mathbf{r}_{n}>C$ for some $C>0$, the estimator $\hat{\eta}^{k}$ defined in the Theorem satisfies
$$
\underset{(\bt,\eta)\in \Omega_{\Delta}}{\sup}\frac{1}{n}\mathbf{E}_{(\bt,\eta)}r(\eta^{k},\eta) \leq C'\Phi^{{\sf c}}\left(\mathbf{r}_{n}\left(1-\epsilon_{n}-\frac{C'\log{\mathbf{r}_{n}}}{\mathbf{r}_{n}}\right)\right),
$$
for some sequence $\epsilon_{n}$ such that $\epsilon_{n} = o(1)$. If $\Phi^{{\sf c}}(\mathbf{r}_{n})\to 0$, then $\mathbf{r}_{n} \to \infty$. Hence for any $\epsilon>0$, $\mathbf{r}_{n}(1-\epsilon) \to \infty$. It follows that $\mathbf{r}_{n}\left(1 - \epsilon_{n} - \frac{C'\log{\mathbf{r}_{n}}}{\mathbf{r}_{n}}\right) \to \infty$. We conclude that almost full recovery is possible under the condition $\Phi^{{\sf c}}(\mathbf{r}_{n}) \to 0$, and $\hat{\eta}^{k}$ achieves almost full recovery in that case.

For exact recovery, observe that, if 
$$
n\Phi^{{\sf c}}(\mathbf{r}_{n}(1-\epsilon)) \to 0,
$$
for some $\epsilon>0$, then $\mathbf{r}_{n} \to \infty$. It follows that for $n$ large enough
$$
          \mathbf{r}_{n}\left(1 - \epsilon_{n} - \frac{C'\log{\mathbf{r}_{n}}}{\mathbf{r}_{n}}\right) \geq \mathbf{r}_{n}(1-\epsilon).
$$
We conclude by taking the limit that $\hat{\eta}^{k}$ achieves exact recovery in that case, and that exact recovery is possible.
    
\end{itemize}

\subsection{Proof of Theorem \ref{thm:asymp} and \ref{thm:asympE}}
By inverting the function $x \to \frac{x}{\sqrt{x+\frac{p}{n}}}$, we observe that for any $A>0$,
$$
\mathbf{r}_{n}^{2} \geq A \quad \Leftrightarrow \quad \Delta_{n}^{2} \geq A\frac{1+\sqrt{1+\frac{4p}{nA}}}{2}.
$$
Using Theorem \ref{thm:asymp1} and the Gaussian tail function, we get immediately the results for both almost full recovery and exact recovery.

\section{Technical Lemmas}\label{sec:lem}
\begin{lemma}\label{lem:H}
Let $A$ be a matrix in $\mathbf{R}^{n \times n}$. Then
$$
 \| H(A)\|_{op} \leq 2 \| A \|_{op}.
$$
\end{lemma}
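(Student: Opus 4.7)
The plan is to decompose $H(A) = A - D$ where $D = \mathrm{diag}(A)$, then apply the triangle inequality for the operator norm to get
\[
\|H(A)\|_{op} \le \|A\|_{op} + \|D\|_{op},
\]
so the entire task reduces to showing $\|D\|_{op} \le \|A\|_{op}$.

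For the diagonal bound, I would use that $D$ is diagonal, so its operator norm equals $\max_i |A_{ii}|$. Then for each index $i$, writing $A_{ii} = e_i^\top A e_i$ where $e_i$ is the $i$-th standard basis vector, the variational characterization of the operator norm gives
\[
|A_{ii}| = |e_i^\top A e_i| \le \|e_i\|\,\|A\|_{op}\,\|e_i\| = \|A\|_{op}.
\]
Taking the maximum over $i$ yields $\|D\|_{op} \le \|A\|_{op}$, and combining with the triangle inequality gives the claim with constant $2$.

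There is no real obstacle here — the lemma is essentially a one-line consequence of the triangle inequality plus the trivial bound on diagonal entries via the operator norm. The only subtle point worth mentioning is that the constant $2$ is not improvable in general (consider $A = e_1 e_1^\top$, for which $\|A\|_{op}=1$ and $\|H(A)\|_{op}=0$, versus a rank-one antisymmetric-like construction where the bound is tight up to constants), so no sharper inequality is being claimed.
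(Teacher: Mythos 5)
Your proof is correct and follows essentially the same route as the paper: write $H(A)=A-\mathrm{diag}(A)$, apply the triangle inequality, and bound $\|\mathrm{diag}(A)\|_{op}=\max_i|A_{ii}|\le\|A\|_{op}$ (you merely make the last step explicit via $A_{ii}=e_i^\top A e_i$). The closing remark about sharpness of the constant $2$ is tangential and somewhat muddled, but it does not affect the argument.
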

\begin{proof}
From the linearity of $H$, we have that
$$
 \| H(A)\|_{op} \leq  \| A\|_{op} + \| \text{diag}(A)\|_{op},
$$
where
$$
\| \text{diag}(A)\|_{op} = \underset{i}{\max}|A_{ii}| \leq \|A\|_{op}.
$$
\end{proof}
\begin{lemma}\label{lem:diagonal}
For any random matrix $W$ with independent columns, we have
$$
\| H(W^{\top}W) \|_{op} \leq 2\left\| W^{\top}W - \mathbf{E}\left( W^{\top}W\right) \right\|_{op}.
$$
\end{lemma}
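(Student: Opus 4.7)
The plan is to reduce $\mathbf{H}(W^{\top}W)$ to $\mathbf{H}(A)$, where $A := W^{\top}W - \mathbf{E}(W^{\top}W)$, and then apply Lemma \ref{lem:H} to conclude. The operator $\mathbf{H}$ only keeps off-diagonal entries, so the key point is that the off-diagonal structure of $W^{\top}W$ is the same (up to a centering constant) as that of $A$.

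First I would note that since the columns of $W$ are independent (and, in the setting of interest, centered), the off-diagonal entries of $\mathbf{E}(W^{\top}W)$ satisfy
$$
\bigl(\mathbf{E}(W^{\top}W)\bigr)_{ij} = \mathbf{E}(W_i)^{\top}\mathbf{E}(W_j) = 0 \quad \text{for } i \neq j,
$$
so $\mathbf{E}(W^{\top}W)$ is a diagonal matrix. In particular, $\mathbf{H}\bigl(\mathbf{E}(W^{\top}W)\bigr) = 0$. By linearity of $\mathbf{H}$,
$$
\mathbf{H}(W^{\top}W) \;=\; \mathbf{H}\bigl(A + \mathbf{E}(W^{\top}W)\bigr) \;=\; \mathbf{H}(A).
$$

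Finally, applying Lemma \ref{lem:H} to the matrix $A$,
$$
\|\mathbf{H}(W^{\top}W)\|_{op} \;=\; \|\mathbf{H}(A)\|_{op} \;\leq\; 2\|A\|_{op} \;=\; 2\bigl\|W^{\top}W - \mathbf{E}(W^{\top}W)\bigr\|_{op},
$$
which is the desired inequality. There is no real obstacle; the only substantive input is the observation that independence (with centered columns) forces $\mathbf{E}(W^{\top}W)$ to be diagonal, so that deleting the diagonal commutes with recentering. This is precisely the structural fact that makes diagonal deletion an alternative to explicit de-biasing in the paper's spectral analysis.
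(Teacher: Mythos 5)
Your proof is correct and is essentially identical to the paper's: both observe that $\mathbf{E}(W^{\top}W)$ is diagonal (so diagonal deletion kills the centering term) and then invoke Lemma \ref{lem:H}. Your explicit remark that centered columns are needed for the off-diagonal expectations to vanish is a fair point the paper leaves implicit (it holds in the paper's setting where the columns of $W$ are centered Gaussian), but it does not change the argument.
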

\begin{proof}
Since $\mathbf{E}\left( W^{\top}W\right)$ is a diagonal matrix, it follows that
$$ H(W^{\top}W) = H\left(W^{\top}W -\mathbf{E}\left(W^{\top}W\right)\right).$$
The result follows from Lemma \ref{lem:H}.
\end{proof}
\begin{lemma}\label{lem:outside}
Let $u \in \mathbf{S}^{p-1}$ and $v \in \mathbf{S}^{n-1}$, and $W\in \mathbf{R}^{p\times n}$ a matrix with i.i.d. centered Gaussian entries of variance at most $\sigma^{2}$. Then, for some $c,C>0$
$$ \forall t \geq 2\sigma, \quad \mathbf{P}\left(  \left\| \frac{1}{\sqrt{n}}W^{\top}uv^{\top} \right\|_{op} \geq t \right) \leq e^{-cnt/\sigma},
$$ 
and 
$$
\mathbf{E}\left(  \left\| \frac{1}{\sqrt{n}}W^{\top}uv^{\top} \right\|^{2}_{op}  \right) \leq C\sigma^{2}.
$$
\end{lemma}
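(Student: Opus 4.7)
The key observation is that $W^{\top}uv^{\top}$ is a rank-one matrix, so its operator norm factors cleanly. Since $v^{\top}v = 1$, the operator norm of the outer product $ab^{\top}$ for vectors $a,b$ equals $\|a\|\|b\|$, and therefore
$$ \left\| \frac{1}{\sqrt{n}}W^{\top}uv^{\top} \right\|_{op} = \frac{1}{\sqrt{n}}\|W^{\top}u\|\cdot\|v\| = \frac{1}{\sqrt{n}}\|W^{\top}u\|. $$
The first step of the plan is thus to reduce the problem to controlling $\|W^{\top}u\|/\sqrt{n}$.

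Next I would use that the entries $(W^{\top}u)_i = \sum_{j}W_{ji}u_j$ are independent (since the columns of $W$ are independent) centered Gaussian random variables with variance at most $\sigma^{2}\sum_j u_j^2 = \sigma^2$. Consequently $\|W^{\top}u\|^2$ is stochastically dominated by $\sigma^2 \chi^2_n$, and writing $W = \sigma\tilde W$ with $\tilde W$ standard Gaussian, the map $\tilde W \mapsto \|\tilde W^{\top}u\|$ is $1$-Lipschitz with respect to the Frobenius norm (since $\|(A-B)^{\top}u\|\le \|A-B\|_F\|u\|$). Gaussian concentration then yields
$$ \mathbf{P}\bigl(\|W^{\top}u\| \geq \mathbf{E}\|W^{\top}u\| + s\bigr) \leq 2e^{-s^2/(2\sigma^2)}, $$
and $\mathbf{E}\|W^{\top}u\| \le \sqrt{\mathbf{E}\|W^{\top}u\|^2} \le \sigma\sqrt{n}$.

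To conclude the tail bound, set $s = \sqrt{n}(t-\sigma)$; for $t\ge 2\sigma$ we have $t-\sigma \ge t/2$, so
$$ \mathbf{P}\!\left(\frac{1}{\sqrt n}\|W^{\top}u\| \ge t\right) \leq 2e^{-n(t-\sigma)^2/(2\sigma^2)} \leq 2e^{-nt^2/(8\sigma^2)} \leq 2e^{-nt/(4\sigma)}, $$
where the last inequality uses $t^2 \geq 2\sigma t$. The factor $2$ is absorbed into a slightly smaller constant $c$, giving the desired $e^{-cnt/\sigma}$. For the moment bound, one checks directly
$$ \mathbf{E}\!\left\|\frac{1}{\sqrt n}W^{\top}uv^{\top}\right\|_{op}^{2} = \frac{1}{n}\mathbf{E}\|W^{\top}u\|^{2} = \frac{1}{n}\sum_{i=1}^{n}\mathrm{Var}((W^{\top}u)_i) \leq \sigma^{2}. $$

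No step is a true obstacle: the only mild care is the rank-one factorization in Step~1 and the absorption of the prefactor $2$ into the exponent, which requires the hypothesis $t\ge 2\sigma$ to make $(t-\sigma)$ comparable to $t$. Everything else is standard Gaussian concentration plus a direct variance computation.
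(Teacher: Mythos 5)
Your argument is correct in substance but follows a slightly different route from the paper. Where the paper reduces to $\frac{1}{\sqrt n}\|W^{\top}u\|$ and then simply invokes the Hsu--Kakade--Zhang tail inequality for quadratic forms of sub-Gaussian vectors (noting $W^{\top}u\sim N(0,\tau^{2}\mathbf{I}_{n})$ with $\tau\le\sigma$) and gets the expectation bound by integrating the tail, you prove the tail from scratch via Gaussian concentration of Lipschitz functions (Borell--TIS applied to $\tilde W\mapsto\|\tilde W^{\top}u\|$) together with $\mathbf{E}\|W^{\top}u\|\le\sigma\sqrt n$, and you get the second moment by a direct variance computation, which is in fact cleaner and sharper than integrating the tail. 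You also correctly upgrade the paper's inequality $\|W^{\top}uv^{\top}\|_{op}\le\|W^{\top}u\|$ to an equality using the rank-one structure. One small repair is needed in your last step: with the two-sided concentration bound you end up with $2e^{-nt/(4\sigma)}$, and the prefactor $2$ cannot be absorbed into $e^{-cnt/\sigma}$ uniformly, since at the boundary $n=1$, $t=2\sigma$ one has $nt/\sigma=2$ and $2e^{-1/2}>e^{-2c}$ for every $c>0$ close to $1/4$; absorbing a constant into the exponent requires the exponent to be bounded below by a sufficiently large quantity, which $nt/\sigma\ge 2$ does not guarantee. The fix is immediate: use the one-sided Gaussian concentration inequality $\mathbf{P}\bigl(f(g)\ge\mathbf{E}f(g)+s\bigr)\le e^{-s^{2}/(2L^{2})}$ (no factor $2$), which gives $\mathbf{P}\le e^{-nt/(4\sigma)}$ directly, i.e.\ the claim with $c=1/4$. (Also note that since the entries are i.i.d.\ with common variance $\tau^{2}\le\sigma^{2}$, your ``variance at most'' precautions simplify: $W=\tau\tilde W$ with $\tilde W$ standard, and the map is $\tau$-Lipschitz.) With that one-line change your proof is complete.
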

\begin{proof}
We can easily check that
$$
\left\| \frac{1}{\sqrt{n}}W^{\top}uv^{\top} \right\|_{op} \leq \frac{1}{\sqrt{n}} \|W^{\top}u\|_{2}.
$$
Since $\|u\|_{2}=1$, we have that $W^{\top}u$ is Gaussian with mean $0$ and covariance matrix $\sigma^{2}\mathbf{I}_{n}$. We conclude using a tail inequality for quadratic forms of sub-Gaussian random variables using the fact that $t \geq 2\sigma$, see, e.g., \cite{hsutail}. The inequality in expectation is immediate by integration of the tail function.
\end{proof}

\begin{lemma}\label{lem:inside}
Let $W\in \mathbf{R}^{p\times n}$ be a matrix with i.i.d. centered Gaussian entries of variance at most $\sigma^{2}$. For some $c,C,C'>0$ we have
$$
\forall t \geq C\sigma^{2} \left(1 \vee \sqrt{\frac{p}{n}}\right), \quad \mathbf{P}\left( \frac{1}{n}\| H(W^{\top}W)\|_{op} \geq t\right)  \leq e^{-cnt/\sigma^{2}\left(1 \wedge \frac{tn}{p\sigma^{2}}\right)},
$$
and 
$$
 \mathbf{E}\left( \frac{1}{n}\| H(W^{\top}W)\|^{2}_{op}\right)  \leq C'\sigma^{4}(1+p/n).
$$
\end{lemma}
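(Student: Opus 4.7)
The plan is to reduce to bounding the centered sample covariance matrix via Lemma \ref{lem:diagonal}, and then apply a standard Hanson-Wright concentration on a covering net of the sphere $\mathbf{S}^{n-1}$.

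First, I would invoke Lemma \ref{lem:diagonal}, which gives
$$\|H(W^{\top}W)\|_{op} \leq 2\|W^{\top}W - \mathbf{E}(W^{\top}W)\|_{op}.$$
Since $M := W^{\top}W - \mathbf{E}(W^{\top}W)$ is symmetric, its operator norm equals $\sup_{v \in \mathbf{S}^{n-1}} |v^\top M v| = \sup_{v \in \mathbf{S}^{n-1}} \bigl|\|Wv\|_2^2 - \mathbf{E}\|Wv\|_2^2 \bigr|$. For fixed $v \in \mathbf{S}^{n-1}$, the vector $Wv \in \mathbf{R}^p$ is centered Gaussian with covariance $\Sigma_v$ satisfying $\|\Sigma_v\|_{op} \leq \sigma^2$ and $\mathrm{Tr}(\Sigma_v) \leq p\sigma^2$, so $\|\Sigma_v\|_{HS}^2 \leq p\sigma^4$. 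Applying the Hanson-Wright inequality from \cite{hsutail} gives
$$\mathbf{P}\Bigl(\bigl|\|Wv\|_2^2 - \mathbf{E}\|Wv\|_2^2\bigr| \geq s\Bigr) \leq 2\exp\Bigl(-c\,\min\bigl(s^2/(p\sigma^4),\, s/\sigma^2\bigr)\Bigr),$$
for some absolute constant $c > 0$.

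Next I would pass from a fixed $v$ to the supremum via a standard $\varepsilon$-net argument (for instance $\varepsilon = 1/4$): there exists a finite net $\mathcal{N} \subset \mathbf{S}^{n-1}$ with $|\mathcal{N}| \leq 9^n$ such that
$$\|M\|_{op} \leq 2 \sup_{v \in \mathcal{N}} |v^\top M v|.$$
A union bound over $\mathcal{N}$ then yields, for $s \geq 0$,
$$\mathbf{P}\bigl(\|M\|_{op} \geq 2s\bigr) \leq 2 \cdot 9^n \exp\Bigl(-c\,\min\bigl(s^2/(p\sigma^4),\, s/\sigma^2\bigr)\Bigr).$$
Choosing $s = nt/2$ and requiring $t \geq C\sigma^2(1 \vee \sqrt{p/n})$ with $C$ large enough, the term $9^n = e^{n \log 9}$ is absorbed into the exponent (since for such $t$ the quantity $\min(s^2/(p\sigma^4), s/\sigma^2) \geq 2n \log 9 + cnt/\sigma^2 \cdot (1 \wedge tn/(p\sigma^2))$), giving the claimed tail bound
$$\mathbf{P}\Bigl(\tfrac{1}{n}\|H(W^\top W)\|_{op} \geq t\Bigr) \leq \exp\Bigl(-c'\,\tfrac{nt}{\sigma^2}\bigl(1 \wedge \tfrac{tn}{p\sigma^2}\bigr)\Bigr).$$

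Finally, for the expectation bound I would integrate the tail: writing $\mathbf{E}\|H(W^\top W)\|_{op}^2 / n^2 = \int_0^\infty 2t\,\mathbf{P}\bigl(\tfrac{1}{n}\|H(W^\top W)\|_{op} \geq t\bigr)\,dt$, split the integral at $t_0 = C\sigma^2(1 \vee \sqrt{p/n})$. The contribution below $t_0$ is at most $t_0^2 \lesssim \sigma^4(1 + p/n)$, and the tail contribution is controlled by the sub-exponential/sub-Gaussian integral against the exponential bound above, which is of the same order. The main technical nuisance is just bookkeeping of constants to match the threshold $t \geq C\sigma^2(1 \vee \sqrt{p/n})$ and the two-regime exponent $1 \wedge tn/(p\sigma^2)$; the underlying mechanism is just Hanson-Wright plus a net, which is routine.
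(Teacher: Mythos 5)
Your proposal is correct and follows essentially the same route as the paper: reduce to $\|W^{\top}W-\mathbf{E}(W^{\top}W)\|_{op}$ via Lemma \ref{lem:diagonal}, control it by a quadratic-form concentration bound combined with a $9^{n}$-size net of $\mathbf{S}^{n-1}$ (the paper invokes Theorem 4.6.1 of \cite{vershinynHigh}, whose proof is exactly this net plus Bernstein-type step, and the same $9^{n}$ factor appears there), absorb the $9^{n}$ using $t\geq C\sigma^{2}(1\vee\sqrt{p/n})$ with $C$ large, and integrate the tail for the expectation. The minor factor-of-two bookkeeping in your choice of $s$ only affects the unspecified absolute constants.
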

\begin{proof}
Using Lemma \ref{lem:diagonal}, we get
$$
\mathbf{P}\left( \frac{1}{n}\| H(W^{\top}W)\|_{op} \geq t\right)  \leq \mathbf{P}\left( \frac{1}{n}\| W^{\top}W - \mathbf{E}(W^{\top}W)\|_{op} \geq t/2\right).
$$
Now based on Theorem $4.6.1$ in \cite{vershinynHigh}, we get moreover that
$$
\mathbf{P}\left( \frac{1}{n}\| H(W^{\top}W)\|_{op} \geq t\right)  \leq 9^{n}2e^{-cnt/\sigma^2(1\wedge tn/(p\sigma^2))},
$$
for some $c>0$. For $t \geq C(1\vee \sqrt{p/n})\sigma^{2}$ with $C$ large enough, we get $ ct(1\wedge tn/(p\sigma^{2})) \geq 4\sigma^{2}\log{9} $, hence
$$
\mathbf{P}\left( \frac{1}{n}\| H(W^{\top}W)\|_{op} \geq t\right)  \leq e^{-c'nt/\sigma^{2}(1\wedge tn/(p\sigma^{2}))},
$$
for some $c'>0$. The result in expectation is immediate by integration.
\end{proof}
\begin{lemma}\label{lem:round}
For any $x \in \{-1,1\}^{n}$ and $y\in \mathbf{R}^{n}$, we have
$$
\frac{1}{n}\left| x - \text{sign}(y) \right| \leq 2\left\| \frac{x}{\sqrt{n}} - y \right\|^{2}.
$$
\end{lemma}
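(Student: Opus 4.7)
The plan is a straightforward entrywise reduction. First I would rewrite the Hamming-type loss on the left as a sum of indicators: since $x\in\{-1,1\}^n$ and $\mathrm{sign}(y)\in\{-1,1\}^n$, we have
$$
|x-\mathrm{sign}(y)| \;=\; 2\sum_{i=1}^n \mathbf{1}(x_i\neq \mathrm{sign}(y_i)).
$$
So the claim reduces to showing, for each coordinate $i$, that
$$
\mathbf{1}(x_i\neq \mathrm{sign}(y_i)) \;\le\; n\!\left(\tfrac{x_i}{\sqrt{n}}-y_i\right)^{2}
\;=\;\bigl(x_i-\sqrt{n}\,y_i\bigr)^{2},
$$
and then summing over $i$ and dividing by $n$.

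The coordinate-wise bound is immediate from a sign-mismatch argument. Note that $\mathrm{sign}(y_i)=\mathrm{sign}(\sqrt{n}\,y_i)$, so $x_i\neq\mathrm{sign}(y_i)$ means $x_i\in\{-1,1\}$ and $\sqrt{n}\,y_i$ lies in the closed half-line of opposite sign (with whatever convention for $\mathrm{sign}(0)$). In either case $(x_i=1,\sqrt{n}\,y_i\le 0)$ or $(x_i=-1,\sqrt{n}\,y_i\ge 0)$, one has $|x_i-\sqrt{n}\,y_i|\ge 1$, hence $(x_i-\sqrt{n}\,y_i)^2\ge 1\ge \mathbf{1}(x_i\neq \mathrm{sign}(y_i))$. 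Summing and applying the identity above then gives
$$
\tfrac{1}{n}|x-\mathrm{sign}(y)| \;=\; \tfrac{2}{n}\sum_{i=1}^n \mathbf{1}(x_i\neq \mathrm{sign}(y_i)) \;\le\; 2\sum_{i=1}^n\!\left(\tfrac{x_i}{\sqrt{n}}-y_i\right)^{2} \;=\; 2\bigl\|\tfrac{x}{\sqrt{n}}-y\bigr\|^{2}.
$$

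There is essentially no obstacle here: this is a deterministic, elementary rounding inequality of the type used throughout the paper to convert $\ell_2$ perturbation bounds (e.g.\ the Davis--Kahan bound in the proof of Theorem~\ref{thm:spectral}) into Hamming-distance bounds on sign vectors. The only mild subtlety is the boundary case $y_i=0$, which is handled uniformly by the observation $|x_i-\sqrt{n}\,y_i|\ge|x_i|=1$ regardless of how $\mathrm{sign}(0)$ is defined.
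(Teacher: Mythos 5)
Your proof is correct and follows essentially the same coordinate-wise reduction as the paper: both reduce to the entrywise bound $\mathbf{1}(x_i\neq \mathrm{sign}(y_i)) \le (x_i-\sqrt{n}\,y_i)^2$, justified by observing that a sign mismatch forces $|x_i-\sqrt{n}\,y_i|\ge 1$. The paper states this bound directly; you merely spell out the sign-mismatch argument and the boundary case $y_i=0$ in more detail.
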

\begin{proof}
It is enough to observe that if $x_{i} \in \{-1,1\}$, then
$$
|x_{i} - \text{sign}(y_{i})| = 2\mathbf{1}(x_{i} \neq \text{sign}(y_{i})) \leq 2|x_{i}-\sqrt{n}y_{i}|^{2}.
$$
\end{proof}

\end{document}